\newtheorem{theorem}{Theorem}[section]
\newtheorem{lemma}[theorem]{Lemma}
\newtheorem{corollary}[theorem]{Corollary}
\newtheorem{proposition}[theorem]{Proposition}
\theoremstyle{definition}
\newtheorem{example}[theorem]{Example}
\newtheorem{remark}[theorem]{Remark}
\newtheorem{question}[theorem]{Question}
\numberwithin{equation}{theorem}
\def\ara{\operatorname{ara}}
\def\height{\operatorname{height}}
\def\image{\operatorname{image}}
\def\cd{\operatorname{cd}}
\def\rad{\operatorname{rad}}
\def\rank{\operatorname{rank}}
\def\Cdot{C^\bullet}
\def\dR{\operatorname{dR}}
\def\End{\operatorname{End}}
\def\Ext{\operatorname{Ext}}
\def\Hom{\operatorname{Hom}}
\def\Hsing{H_{\mathrm{sing}}}
\def\Spec{\operatorname{Spec}}
\def\Supp{\operatorname{Supp}}
\def\Alt{\operatorname{Alt}}
\def\Sym{\operatorname{Sym}}
\def\Var{\operatorname{Var}}
\def\bsg{{\boldsymbol{g}}}
\def\bdel{{\boldsymbol{\partial}}}
\def\fraka{\mathfrak{a}}
\def\frakb{\mathfrak{b}}
\def\frakm{\mathfrak{m}}
\def\frakp{\mathfrak{p}}
\def\CC{\mathbb{C}}
\def\FF{\mathbb{F}}
\def\NN{\mathbb{N}}
\def\PP{\mathbb{P}}
\def\QQ{\mathbb{Q}}
\def\RR{\mathbb{R}}
\def\SS{\mathbb{S}}
\def\ZZ{\mathbb{Z}}
\def\calD{\mathcal{D}}
\def\calF{\mathcal{F}}
\def\calM{\mathcal{M}}
\def\calN{\mathcal{N}}
\def\calO{\mathcal{O}}
\def\grD{{^*\!D}}
\def\grtau{{^*\!\tau}}
\def\calH{{^*\!\!{\mathcal{H}}}}
\def\nil{\mathrm{nil}}
\def\red{\mathrm{red}}
\def\st{\mathrm{st}}
\def\ge{\geqslant}
\def\le{\leqslant}
\def\phi{\varphi}
\def\bar{\overline}
\def\tilde{\widetilde}
\def\del{\partial}
\def\dlim{\varinjlim}
\def\to{\longrightarrow}
\def\mapsto{\longmapsto}
\renewcommand{\mod}{\,\operatorname{mod}\,}
\begin{document}
\title[Local cohomology modules supported at determinantal ideals]{Local cohomology modules supported at\\
determinantal ideals}

\author{Gennady Lyubeznik}
\address{Department of Mathematics, University of Minnesota, 127 Vincent Hall, 206 Church~St.,
\newline Minneapolis, MN~55455, USA}
\email{gennady@math.umn.edu}

\author{Anurag K. Singh}
\address{Department of Mathematics, University of Utah, 155 South 1400 East, Salt Lake City,
\newline UT~84112, USA}
\email{singh@math.utah.edu}

\author{Uli Walther}
\address{Department of Mathematics, Purdue University, 150 N University St., West Lafayette,
\newline IN~47907, USA}
\email{walther@math.purdue.edu}

\thanks{G.L.~was supported by NSF grant DMS~1161783, A.K.S.~by NSF grants DMS~1162585 and DMS~1500613, and U.W.~by NSF grants DMS~0901123 and DMS~1401392. G.L. and A.K.S. thank the American Institute of Mathematics for supporting their collaboration. All authors were also supported by NSF grant~0932078000 while in residence at MSRI}

\subjclass[2010]{Primary 13D45; Secondary 13A35, 13A50, 13C40, 13F20, 14B15.}
\maketitle

\section{Introduction}

In \cite[Corollary~6.5]{HKM}, Huneke, Katz, and Marley proved the following striking result: If~$A$ is a commutative Noetherian ring containing the field of rational numbers, with $\dim\,A\le 5$, and~$\fraka$ is the ideal generated by the size $2$ minors of an arbitrary $2\times 3$ matrix with entries from~$A$, then the local cohomology module $H^3_\fraka(A)$ equals zero. What makes this striking is that it does not follow from classical vanishing theorems as in~\cite{HL}. It is natural to ask whether the same holds for rings that do not necessarily contain the rationals, and whether such results extend to matrices and minors of other sizes. Indeed, we prove:

\begin{theorem}
\label{theorem:intro:vanish}
Let $\fraka$ be the ideal generated by the size $t$ minors of an $m\times n$ matrix with entries from a commutative Noetherian ring $A$, where $1\le t\le\min\{m,n\}$, and~$t$ differs from at least one of $m$ and $n$. If $\dim\,A<mn$, then $H^{mn-t^2+1}_\fraka(A)=0$.
\end{theorem}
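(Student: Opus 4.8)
The plan is to reduce the statement to a computation for the generic matrix over $\ZZ$, convert the module in question into a single tensor product, and then analyse that tensor product with $\calD$-module and $F$-module techniques.

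\textbf{Reduction to the generic case.} Let $R=\ZZ[x_{ij}]$ be the polynomial ring in the $mn$ variables $x_{ij}$, let $X=(x_{ij})$ be the generic $m\times n$ matrix, $I_t\subset R$ its ideal of size $t$ minors, and put $c=mn-t^2+1$. An $m\times n$ matrix over $A$ amounts to a ring homomorphism $R\to A$; under it $\fraka=I_tA$ and $H^j_\fraka(A)\cong H^j_{I_t}(A)$, the latter formed from the $R$-module structure on $A$. By the Bruns--Schw\"anzl bound, which holds over $\ZZ$ and applies since $t$ differs from $m$ or $n$, one has $\ara(I_t,R)\le c$, hence $\cd(I_t,R)\le c$. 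As $R$ is regular, this forces $H^{c+1}_{I_t}$ to vanish on every $R$-module, so $H^c_{I_t}(-)$ is right exact and commutes with direct sums; comparing it with $W\otimes_R(-)$ for $W:=H^c_{I_t}(R)$ via the natural \v{C}ech-complex transformation, which is an isomorphism on free modules, shows the two functors agree. Thus
$$
H^{mn-t^2+1}_\fraka(A)\ \cong\ W\otimes_R A .
$$
If $W=0$ we are done, so assume $\cd(I_t,R)=c$.

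\textbf{Support of $W$.} Write $\frakm=(x_{ij})R$. For a prime $\frakp\ne\frakm$, some $x_{ij}$ is a unit in $R_\frakp$, so row and column operations over $R_\frakp$ transform $X$ into $\operatorname{diag}(X',x_{ij})$ with $X'$ an $(m-1)\times(n-1)$ generic matrix and $I_tR_\frakp=I_{t-1}(X')R_\frakp$; the Bruns--Schw\"anzl bound for $X'$ then gives
$$
\cd(I_tR_\frakp,R_\frakp)\ \le\ (m-1)(n-1)-(t-1)^2+1\ =\ c-(m+n-2t)\ <\ c ,
$$
the strict inequality being the place where $t\le\min\{m,n\}$ together with $t\ne m$ or $t\ne n$ is used, so that $m+n>2t$. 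Hence $W_\frakp=0$ for all $\frakp\ne\frakm$; that is, $W$ is $\frakm$-torsion, and under $R\to A$ the ideal $\frakm$ acts through $\mathfrak{c}:=(a_{ij})A$.

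\textbf{Identifying $W$ and finishing.} Over a field $k$ the module $H^c_{I_t}(k[x_{ij}])$ is, by the support computation, a $\calD$-module (char.\ $0$) or $F$-finite $F$-module (char.\ $p$) supported at a point, hence a finite direct sum of copies of $E_k:=H^{mn}_{(x_{ij})}(k[x_{ij}])$. Using $0\to R\xrightarrow{p}R\to\FF_p[x_{ij}]\to0$ with $H^{c+1}_{I_t}(R)=0$ gives $W/pW\cong H^c_{I_t}(\FF_p[x_{ij}])$, and inverting $p$ gives $W\otimes_\ZZ\QQ\cong H^c_{I_t}(\QQ[x_{ij}])$. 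The key is to upgrade these to an identification of $W$ over $\ZZ$, namely $W\cong E_\ZZ^{\oplus\mu}$ with $E_\ZZ:=H^{mn}_{(x_{ij})}(R)$ --- equivalently, that the multiplicity does not depend on the characteristic. Granting this, and since the top local cohomology $H^{mn}_{(x_{ij})}(-)$ likewise equals $E_\ZZ\otimes_R(-)$,
$$
H^{mn-t^2+1}_\fraka(A)\ \cong\ H^{mn}_{(x_{ij})}(A)^{\oplus\mu}\ =\ H^{mn}_{\mathfrak{c}}(A)^{\oplus\mu};
$$
and as $\mathfrak{c}$ is generated by the $mn$ elements $a_{ij}$, $\cd(\mathfrak{c},A)\le\dim A<mn$, so $H^{mn}_{\mathfrak{c}}(A)=0$.

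\textbf{Main obstacle.} The hard part is precisely the characteristic-independent identification of $W=H^{mn-t^2+1}_{I_t}(R)$. The $\calD$-module and $F$-module pictures describe the field fibers, but they must be glued across the mixed-characteristic primes $(p,x_{ij})$, where neither framework is directly available; this requires an explicit, uniform computation of the top local cohomology (equivalently, the characteristic cycle or the Lyubeznik numbers) of the generic determinantal ring, and in particular a proof that the relevant multiplicity does not jump at any prime. By contrast, the reduction to $W\otimes_R A$ and the support computation are formal, given the Bruns--Schw\"anzl bound and the standard behaviour of determinantal ideals under localization.
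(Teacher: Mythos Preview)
Your reduction to $W\otimes_RA$ and the support argument are along the paper's lines (though in the support step ``$\frakp\ne\frakm$'' should read ``$\frakp\not\supseteq\frakm$'': over $\ZZ$ the maximal ideals $(p,x_{11},\dots,x_{mn})$ contain every $x_{ij}$). The real problem is your target identification $W\cong E_\ZZ^{\oplus\mu}$: for $t\ge2$ it is false, and you already have the tools to see this. Your own exact sequence gives $W/pW\cong H^c_{I_t}(\FF_p[X])$; but $R/(I_t+pR)$ is Cohen--Macaulay and, under the hypotheses on $t$, one has $c>\height I_t$, so Peskine--Szpiro vanishing forces $H^c_{I_t}(\FF_p[X])=0$. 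Thus $W/pW=0$ for every prime $p$, while $W\otimes_\ZZ\QQ\cong H^c_{I_t}(\QQ[X])\neq0$. The multiplicity you hoped to prove constant actually jumps from $1$ in characteristic zero to $0$ in positive characteristic; no power of $E_\ZZ$ behaves this way.

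What the computation $W/pW=0$ really tells you is that $W$ is $p$-divisible for all $p$. The missing ingredient is that $W$ is \emph{$\ZZ$-torsion-free}; granting that, $W$ is a $\QQ$-vector space, hence $W\cong H^c_{I_t}(\QQ[X])\cong H^{mn}_\frakm(\QQ[X])$, and then your final step goes through with $H^{mn}_\frakm(A\otimes_\ZZ\QQ)$ in place of $H^{mn}_{\mathfrak{c}}(A)$. Torsion-freeness is precisely Theorem~\ref{theorem:intro:rational}\,(1) and is the heart of the paper: it is proved by induction on $t$ via the localization Lemma~\ref{lemma:matrix:invert} together with the graded $\calF$-module/$\calD$-module criterion Corollary~\ref{corollary:p:torsion} (resting on Theorem~\ref{theorem:quotient:graded}), which reduces the question to the nilpotence of Frobenius on ${[H^{\dim R/(I_t+pR)}_\frakm(R/(I_t+pR))]}_0$---equivalently, to the negativity of the $a$-invariant of the determinantal ring mod $p$. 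This is not a gap that a formal gluing argument across characteristics can fill; it is exactly where the new machinery enters.
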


The index $mn-t^2+1$ is the cohomological dimension in the case of a matrix of indeterminates $X=(x_{ij})$ over $\QQ$ by Bruns and Schw\"anzl~\cite{Bruns-Schwanzl}; specifically,
\[
H^{mn-t^2+1}_{I_t(X)}(\QQ[X])\ \neq\ 0\,,
\]
where $I_t(X)$ is the ideal generated by the size $t$ minors of the matrix $X$. Theorem~\ref{theorem:intro:vanish} implies that the asserted vanishing holds whenever the entries of the matrix are \emph{not} algebraically independent. In the case $m=2$, $n=3$, and~$t=2$, the theorem says precisely that $H^3_\fraka(A)=0$ if $\dim\,A\le 5$, as proved in~\cite{HKM} when $A$ contains the field of rational numbers. The result is straightforward when $A$ contains a field of prime characteristic, and one of the main points of the present paper is that it includes the case of rings that do not necessarily contain a field. This requires calculations of local cohomology in polynomial rings~$\ZZ[X]$; these calculations are of independent interest, and a key ingredient is proving that there is no integer torsion in the critical local cohomology modules. More generally, we prove:

\begin{theorem}
\label{theorem:intro:rational}
Let $R=\ZZ[X]$ be a polynomial ring, where $X$ is an $m\times n$ matrix of indeterminates. Let $I_t$ be the ideal generated by the size $t$ minors of $X$. Then:
\begin{enumerate}[\quad\rm(1)]
\item $H^k_{I_t}(R)$ is a torsion-free $\ZZ$-module for all integers $t,k$.
\item If $k$ differs from the height of $I_t$, then $H^k_{I_t}(R)$ is a $\QQ$-vector space.
\item Consider the $\NN$-grading on $R$ with ${[R]}_0=\ZZ$ and~$\deg x_{ij}=1$ for all $i,j$. Set $\frakm=(x_{11},\dots,x_{mn})$. If $2\le t\le\min\{m,n\}$, and $t$ differs from at least one of $m$ and $n$, then there exists a degree-preserving isomorphism
\[
H^{mn-t^2+1}_{I_t}(\ZZ[X])\ \cong\ H^{mn}_\frakm(\QQ[X])\,.
\]
\end{enumerate}
\end{theorem}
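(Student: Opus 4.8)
\emph{Overall strategy.} All three parts are driven by the exact sequence $0 \to R \xrightarrow{p} R \to \FF_p[X] \to 0$ for a prime $p$, where $R=\ZZ[X]$; applying local cohomology with support in $I_t$ gives, for each $k$, an exact sequence
\[
\cdots \to H^{k-1}_{I_t}(\FF_p[X]) \to H^k_{I_t}(R) \xrightarrow{\,p\,} H^k_{I_t}(R) \to H^k_{I_t}(\FF_p[X]) \to \cdots .
\]
So the $p$-torsion of $H^k_{I_t}(R)$ is a quotient of $H^{k-1}_{I_t}(\FF_p[X])$, and $H^k_{I_t}(R)$ is a $\QQ$-vector space precisely when multiplication by $p$ is bijective on it for every $p$. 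The essential positive-characteristic input is that $\FF_p[X]/I_t$ is Cohen--Macaulay of codimension $h:=\height I_t=(m-t+1)(n-t+1)$ (Hochster--Eagon, valid over any ring since $\ZZ[X]/I_t$ is $\ZZ$-free and Cohen--Macaulay); together with the grade lower bound and the Peskine--Szpiro vanishing above the height in prime characteristic, this forces $H^k_{I_t}(\FF_p[X])=0$ for all $k\neq h$. (In characteristic zero, by contrast, Bruns and Schw\"anzl produce non-vanishing up to $k=mn-t^2+1$.)

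\emph{Parts (1) and (2).} If $k\notin\{h,h+1\}$ then $H^{k-1}_{I_t}(\FF_p[X])=0=H^k_{I_t}(\FF_p[X])$, so multiplication by $p$ is bijective on $H^k_{I_t}(R)$ for every $p$; hence $H^k_{I_t}(R)$ is a $\QQ$-vector space, in particular torsion-free. If $k=h$ then $H^{h-1}_{I_t}(\FF_p[X])=0$, so $H^h_{I_t}(R)$ has no $p$-torsion for any $p$ (and part (2) asserts nothing there). Only $k=h+1$ remains, and since $H^{h+1}_{I_t}(\FF_p[X])=0$ multiplication by $p$ is already surjective on $H^{h+1}_{I_t}(R)$; thus it suffices to show the reduction map
\[
H^h_{I_t}(\ZZ[X]) \to H^h_{I_t}(\FF_p[X])
\]
is surjective for every $p$, which then finishes (1) and (2) at once. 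This is the main obstacle. Since $H^h_{I_t}(-)$ is the lowest nonvanishing local cohomology, it is controlled by $\Ext^h$: the determinantal ring is $\ZZ$-free (standard monomials) and Cohen--Macaulay, so $\Ext^h_{\ZZ[X]}(\ZZ[X]/I_t,\ZZ[X])$ is the canonical module, is $\ZZ$-flat, and reduces mod $p$ to the canonical module of $\FF_p[X]/I_t$; its image in $H^h_{I_t}(\ZZ[X])$ maps onto the canonical module sitting inside $H^h_{I_t}(\FF_p[X])$. In prime characteristic $H^h_{I_t}(\FF_p[X])$ is an $F$-finite $F$-module generated by that canonical module, so one must upgrade ``the generator is in the image'' to ``everything is in the image''. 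I would do this through $H^h_{I_t}(R)=\dlim_\ell \Ext^h_R(R/I_t^{(\ell)},R)$ over symbolic powers, using the De Concini--Eagon--Procesi standard-monomial description of $I_t^{(\ell)}$ to keep each term $\ZZ$-flat and base-change compatible, so that the change-of-rings sequence reduces the claim to the vanishing of the $p$-torsion of $\dlim_\ell \Ext^{h+1}_R(R/I_t^{(\ell)},R)$; this last vanishing is the technical heart, and I expect it to require the explicit combinatorics of minors.

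\emph{Part (3).} Assume (1) and (2). Since $t$ differs from one of $m,n$ we have $m+n\ge 2t+1$, so $c_0-h=(t-1)(m+n-2t)\ge 1$ with $c_0:=mn-t^2+1>h$; thus part (2) at $k=c_0\ne h$ shows $H^{c_0}_{I_t}(\ZZ[X])$ is a $\QQ$-vector space, and the flat base-change isomorphism $H^{c_0}_{I_t}(\ZZ[X])\otimes_\ZZ\QQ\cong H^{c_0}_{I_t}(\QQ[X])$ is degree-preserving and identifies the two. It remains to show $H^{c_0}_{I_t}(\QQ[X])\cong H^{mn}_\frakm(\QQ[X])$ as graded modules. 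First compute the support: if $\frakq$ is a prime not containing $I_s$ for some $1\le s\le t-1$, then inverting an $s\times s$ minor turns $I_t\QQ[X]_\frakq$, after a polynomial change of coordinates, into the ideal of $(t-s)$-minors of a generic $(m-s)\times(n-s)$ matrix, which by Bruns and Schw\"anzl has cohomological dimension $(m-s)(n-s)-(t-s)^2+1=c_0-s(m+n-2t)<c_0$; running $s$ from $t-1$ down to $1$ gives $\Supp H^{c_0}_{I_t}(\QQ[X])=\{\frakm\}$. Being a holonomic $\calD$-module supported at the origin, $H^{c_0}_{I_t}(\QQ[X])$ is a finite direct sum $H^{mn}_\frakm(\QQ[X])^{\oplus\mu}$ of copies of $H^{mn}_\frakm(\QQ[X])$ in the category of graded $\calD$-modules, with $\mu\ge 1$; equivalently $\mu=\dim_\QQ[H^{c_0}_{I_t}(\QQ[X])]_{-mn}$. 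The remaining point, $\mu=1$, is the second main obstacle. I would attack it by identifying $\mu$ as a Lyubeznik number of the determinantal ring, or via $H^{c_0}_{I_t}(\QQ[X])\cong H^{c_0-1}(\Spec\QQ[X]\setminus\Var(I_t),\,\calO)$ together with the $GL_m\times GL_n$-equivariant geometry of the complement of the determinantal variety (its stratification by rank, or the Kempf--Lascoux resolution over a Grassmannian), to pin down the single graded component in degree $-mn$; a complementary route is the spectral sequence $H^p_\frakm(H^q_{I_t}(R))\Rightarrow H^{p+q}_\frakm(R)$, whose abutment is $H^{mn}_\frakm(R)$ in total degree $mn$ and vanishes otherwise, combined with the fact --- proved by the same localization argument --- that $H^q_{I_t}(R)$ is supported at $\frakm$ for every $q$ with $c_0-(m+n-2t)<q\le c_0$, to force the differentials to collapse $H^{c_0}_{I_t}(R)$ onto one copy of $H^{mn}_\frakm(R)$. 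Once $\mu=1$ is in hand, the degree-preserving isomorphism of part (3) follows.
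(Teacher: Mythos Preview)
Your framework matches the paper's exactly: the long exact sequence from $0\to R\xrightarrow{p}R\to\FF_p[X]\to0$, Peskine--Szpiro vanishing in characteristic~$p$, and the reduction of (1)--(2) to the single statement that multiplication by $p$ is injective on $H^{h+1}_{I_t}(R)$. The reduction of (3) to the computation of a multiplicity $\mu$ is also the paper's route, and your support computation via localization is essentially Lemma~\ref{lemma:matrix:invert}.

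However, your attack on the two acknowledged obstacles has a genuine gap.

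\emph{Obstacle 1.} Your plan is to write $H^h_{I_t}(R)=\varinjlim_\ell\Ext^h_R(R/I_t^{(\ell)},R)$ and use $\ZZ$-flatness of $R/I_t^{(\ell)}$ to reduce surjectivity of $H^h_{I_t}(\ZZ[X])\to H^h_{I_t}(\FF_p[X])$ to ``the vanishing of the $p$-torsion of $\varinjlim_\ell\Ext^{h+1}_R(R/I_t^{(\ell)},R)$''. But that direct limit \emph{is} $H^{h+1}_{I_t}(R)$, so this is circular; and if you instead mean to kill each $\Ext^{h+1}_R(R/I_t^{(\ell)},R)$ termwise, you would need the symbolic powers to be perfect of grade $h$, which fails in general. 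The paper takes a completely different route: it proves injectivity of $p$ on $H^{h+1}_{I_t}(R)$ by \emph{induction on $t$}, using Lemma~\ref{lemma:matrix:invert} to show injectivity after inverting each $x_{ij}$, and then applying Corollary~\ref{corollary:p:torsion}. That corollary is the key new tool: it says the kernel of $p$ on $H^{h+1}_{I_t}(R)$ is an Eulerian $\calD_{\FF_p}(R/pR)$-module quotient of $H^h_{I_t}(R/pR)$; if it were nonzero it would be supported at $\frakm$, and by Theorem~\ref{theorem:quotient:graded} this forces the Frobenius on $[H^{\dim R/pR-h}_\frakm(R/(I_t+pR))]_0$ to be non-nilpotent. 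Since the determinantal ring has negative $a$-invariant, that degree-zero piece is zero, giving a contradiction. The combinatorics of symbolic powers is never invoked; the argument runs through graded $\calF$-module theory instead.

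\emph{Obstacle 2.} None of your three suggestions for $\mu=1$ is carried through, and the spectral-sequence idea would need control of \emph{all} $H^q_{I_t}$ supported at $\frakm$, not just the top one. The paper proves $\mu=1$ via Theorem~\ref{theorem:comparison}, which identifies $\mu$ with $\dim_\CC\Hsing^{2mn-t^2}(\CC^{mn}\setminus\Var(I_t);\CC)$, and then cites Bruns--Schw\"anzl's computation of that singular cohomology group as $\CC$.
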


Theorem~\ref{theorem:intro:rational} is extremely useful: once we know that $H^k_{I_t}(\ZZ[X])$ is a $\QQ$-vector space, it can then be computed using the $\calD$-module algorithms of Walther~\cite{Walther:algo} or Oaku and Takayama~\cite{Oaku-Takayama}; it can also be studied using singular cohomology and comparison theorems as in \cite{Bruns-Schwanzl}, or using representation theory as in \cite{Witt, RaicuWeymanWitt, RaicuWeyman:ANT}. For example, Theorem~\ref{theorem:intro:rational} implies that the module $H^k_{I_t}(\ZZ[X])$ is nonzero precisely if $H^k_{I_t}(\CC[X])$ is nonzero; for recent results on the nonvanishing and structure of $H^k_{I_t}(\CC[X])$ in terms of Schur functors, we refer the reader to \cite{RaicuWeymanWitt, RaicuWeyman:ANT}.

As an illustration of Theorem~\ref{theorem:intro:rational}, consider a $2\times 3$ matrix of indeterminates $X$ over $\ZZ$. Then the theorem gives
\[
H^3_{I_2}(\ZZ[X])\ \cong\ H^6_\frakm(\QQ[X])\,.
\]
The first proof that $H^3_{I_2}(\ZZ[X])$ is a $\QQ$-vector space used equational identities from~\cite{Singh:MRL,Singh:Guanajuato} that were constructed using the hypergeometric series algorithms of Petkov\v sek, Wilf, and Zeilberger~\cite{PWZ}; the module $H^3_{I_2}(\ZZ[X])$ is computed as well in Kashiwara and Lauritzen,~\cite{Kashiwara-Lauritzen}. The approach in the present paper is as follows: Let~$p$ be a prime integer; we study the annihilator of $p$ in $H^3_{I_2}(\ZZ[X])$ as a $\calD$-module, and use a duality result for $\calD$-modules, Theorem~\ref{theorem:quotient:graded}, to show that it vanishes. This requires Lyubeznik's theory of $\calF$-modules \cite{Lyubeznik:Crelle}, and also differential operators over~$\ZZ[X]$, $\FF_p[X]$ and $\QQ[X]$. These techniques work in good generality.

Section~\ref{sec:F:modules} develops the theory of graded $\calF$-modules and $\calD$-modules; the key result for our applications is Theorem~\ref{theorem:quotient:graded}, but in the process, we arrive at several results of independent interest: e.g., for a polynomial ring $R$ over a separably closed field of prime characteristic, we prove that the $\calF$-module $H^{\dim R}_\frakm(R)$ is an injective object in the category of \emph{graded} $\calF$-finite modules, Corollary~\ref{corollary:injective}. By an example of Ma, the module $H^{\dim R}_\frakm(R)$ need not be an injective object in the category of $\calF$-finite modules, see~\cite[Example~4.8]{Ma}.

Some preliminary results on local cohomology are recorded in Section~\ref{sec:lc}; this includes an interpretation of Bass numbers of $\frakm$-torsion local cohomology modules as ranks of singular cohomology groups, Theorem~\ref{theorem:comparison}. Our study of the local cohomology of polynomial rings over $\ZZ$ has its origins in a question of Huneke \cite{Huneke:Sundance} on the associated primes of local cohomology modules; this, as well, is discussed in Section~\ref{sec:lc}.

The proof of Theorem~\ref{theorem:intro:rational} occupies Section~\ref{sec:determinantal}, and in Section~\ref{sec:vanishing} we prove a vanishing theorem that subsumes Theorem~\ref{theorem:intro:vanish}. In addition to determinantal ideals, our methods extend to ideals generated by Pfaffians of alternating matrices, Section~\ref{sec:alt}, and minors of symmetric matrices, Section~\ref{sec:sym}. For these, we use Barile's computations of arithmetic rank from \cite{Barile:ara}. Section~\ref{sec:ara} deals with questions on arithmetic rank related to the vanishing theorems proved in our paper.

To assist the reader, we mention that $R$ will typically denote a commutative Noetherian ring that is regular, and $A$ an arbitrary commutative Noetherian ring.

\section{Graded $\calF$-modules}
\label{sec:F:modules}

Let $R=\FF[x_1,\dots,x_n]$ be the polynomial ring in variables $x_1,\dots,x_n$ over a field $\FF$ of characteristic $p>0$. We fix the standard $\NN$-grading on $R$ where ${[R]}_0=\FF$ and $\deg x_i=1$ for each $i$. By a \emph{graded module $M$}, we mean a $\ZZ$-graded module; we use ${[M]}_k$ for the graded component of $M$ in degree $k$, and $M(j)$ to denote the module $M$ with the shifted grading
\[
{[M(j)]}_k\ =\ {[M]}_{j+k}\,.
\]

\subsection*{$\calF$-modules}

The concept of $\calF$-modules was introduced in \cite{Lyubeznik:Crelle}. Set $R'$ to be the~$R$-bimodule that agrees with $R$ as a left $R$-module, and has the right $R$-action
\[
r'r=r^pr'\qquad\text{ for }\ r\in R \ \text{ and }\ r'\in R'\,.
\]
For an $R$-module $M$, we set $F(M)=R'\otimes_R M$; this is an $R$-module via the left $R$-module structure on $R'$.

An \emph{$\calF$-module} is a pair $(\calM,\theta)$, where $\calM$ is an $R$-module, and $\theta\colon\calM\to F(\calM)$ is an~$R$-module isomorphism called the \emph{structure isomorphism}; we sometimes suppress~$\theta$ from the notation. A \emph{morphism of $\calF$-modules} $(\calM,\theta)\to(\calM',\theta')$ is an $R$-module homomorphism $\phi\colon\calM\to\calM'$ that commutes with the structure isomorphisms, i.e.,
\[
\theta'\circ\phi\ =\ F(\phi)\circ\theta\,,
\]
see \cite[Definition~1.1]{Lyubeznik:Crelle}. With these definitions, $\calF$-modules form an Abelian category.

Graded $\calF$-modules have been studied previously in \cite[Chapter~4.3.3]{Blickle} and \cite{YiZhang, Ma-Zhang}. In this section, we establish properties of graded $\calF$-modules that will be used later in the paper; we believe these are also of independent interest. 

If $M$ is a graded $R$-module, then there is a natural grading on $F(M)=R'\otimes_RM$ given by
\[
\deg(r'\otimes m)\ =\ \deg r'+p\cdot\deg m\,,
\]
for homogeneous elements $r'\in R'$ and $m\in M$. With this grading, a \emph{graded $\calF$-module} is an $\calF$-module $(\calM,\theta)$ where $\calM$ is a graded $R$-module, and~$\theta$ is \emph{degree-preserving}, i.e., $\theta$ maps homogeneous elements to homogeneous elements of the same degree. A \emph{morphism of graded $\calF$-modules} is a degree-preserving morphism of $\calF$-modules. It is not hard to see that graded $\calF$-modules form an Abelian subcategory of the category of $\calF$-modules.

The ring $R$ has a natural graded $\calF$-module structure with structure morphism
\[
R\to R'\otimes_RR\,,\qquad r\mapsto r\otimes1\,.
\]
Let $\frakm$ be the homogeneous maximal ideal of $R$. Let $f$ denote the Frobenius action on the local cohomology module $H^n_\frakm(R)$; the image of $f$ generates $H^n_\frakm(R)$ as an $R$-module. Thus, the following structure morphism defines a graded $\calF$-module structure on $H^n_\frakm(R)$:
\[
H^n_\frakm(R)\to R'\otimes_RH^n_\frakm(R)\,,\qquad rf(\eta)\mapsto r\otimes\eta\,.
\]

\subsection*{$\calD$-modules}

The ring $\calD=\calD_{\,\FF}(R)$ of $\FF$-linear differential operators on $R$ is the subring of the ring $\End_\FF R$ generated by $R$ and all operators of the form
\[
\del_i^{[t]}\ =\ \frac{1}{t!}\frac{\del^t}{\del x_i^t}\,,
\]
see~\cite[Th\'eor\`eme~16.11.2]{EGA4}. In fact, $\calD$ is a free $R$-module, with basis
\[
\del_1^{[t_1]}\ \cdots\ \del_n^{[t_n]}\qquad\text{ for } \ (t_1,\dots,t_n)\in\NN^n\,.
\]

As shown in \cite[page~115--116]{Lyubeznik:Crelle}, each $\calF$-module carries a natural $\calD$-module structure; there exists a functor
\[
\xi\colon\calF\text{-mod}\to\calD\text{-mod}
\]
from the category of $\calF$-modules to the category of $\calD$-modules, where the $\calF$-module~$\calM$ and the $\calD$-module $\xi(\calM)$ have the same underlying $R$-module structure, and the maps $\phi\colon\calM\to\calM'$ and~$\xi(\phi)\colon\xi(\calM)\to\xi(\calM')$ agree as maps of sets.

Following \cite{Ma-Zhang}, for each positive integer $k$, we set $E_k$ to be the differential operator
\[
\sum_{\substack{t_i\ge0\\ t_1+\cdots+t_n=k}}x_1^{t_1}\cdots x_n^{t_n}\del_1^{[t_1]}\cdots\del_n^{[t_n]}\,,
\]
which is the \emph{$k$-th Euler operator}; note that
\[
E_1\ =\ x_1\del_1+\cdots+x_n\del_n
\]
is the classical Euler operator. By \cite[Theorem~4.4]{Ma-Zhang}, if $\calM$ is a graded $\calF$-module, then the $\calD$-module $\xi(\calM)$ is \emph{Eulerian}, which, by definition, means that
\[
E_k(m)\ =\ \binom{\deg m}{k}\, m
\]
for each positive integer $k$ and each homogeneous element $m$ of $\calM$.

We record an elementary lemma:

\begin{lemma}
\label{lemma:binom}
Let $d$ be a positive integer, with base $p$ expansion
\[
d\ =\ s_0+s_1p+\dots+s_tp^t\,,\qquad\text{ where }\ 0\le s_e\le p-1\ \text{ for each }\ e\,.
\]
Then, for each $e$, the binomial coefficient $\binom{d}{p^e}$ is congruent to $s_e$ modulo $p$.
\end{lemma}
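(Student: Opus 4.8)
The plan is to deduce this from the ``freshman's dream'' identity in characteristic $p$, which gives a generating-function proof of the relevant case of Lucas's theorem. Throughout, work with polynomials in a single variable $x$ over $\FF_p$ (equivalently, reduce integer binomial identities modulo $p$).

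First I would record, by an immediate induction on $j$ using $(1+x)^p\equiv 1+x^p\pmod p$, that
\[
(1+x)^{p^j}\ \equiv\ 1+x^{p^j}\pmod p\qquad\text{for all }j\ge0\,.
\]
Using the base $p$ expansion $d=s_0+s_1p+\dots+s_tp^t$ and multiplying these congruences, one obtains
\[
(1+x)^d\ \equiv\ \prod_{j=0}^{t}\bigl((1+x)^{p^j}\bigr)^{s_j}\ \equiv\ \prod_{j=0}^{t}\bigl(1+x^{p^j}\bigr)^{s_j}\pmod p\,.
\]

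Next I would extract the coefficient of $x^{p^e}$ on each side. On the left it is $\binom{d}{p^e}$. On the right, expand each factor as $\bigl(1+x^{p^j}\bigr)^{s_j}=\sum_{a=0}^{s_j}\binom{s_j}{a}x^{ap^j}$; a contribution to $x^{p^e}$ comes exactly from a choice of integers $0\le a_j\le s_j$ with $\sum_{j=0}^{t}a_jp^j=p^e$. Since each $a_j\le s_j\le p-1$, uniqueness of the base $p$ representation forces $a_e=1$ and $a_j=0$ for $j\ne e$ (when $e>t$ there is no such choice, which is consistent since then $p^e>d$, so $\binom{d}{p^e}=0=s_e$). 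Hence the coefficient of $x^{p^e}$ on the right is $\binom{s_e}{1}\prod_{j\ne e}\binom{s_j}{0}=s_e$, and therefore $\binom{d}{p^e}\equiv s_e\pmod p$.

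No real obstacle is expected here; the only point requiring a moment's care is the bookkeeping in the last step, where one must use $s_j\le p-1$ to invoke uniqueness of base $p$ digits and separately dispatch the case $e>t$. Alternatively, one may simply cite Lucas's theorem, of which the statement is the special case $n=p^e$, whose base $p$ expansion is a single digit $1$ in position $e$.
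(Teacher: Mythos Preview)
Your proof is correct and follows essentially the same approach as the paper: both work in $\FF_p[z]$, factor $(1+z)^d=\prod_e(1+z^{p^e})^{s_e}$ via the Frobenius, and read off the coefficient of $z^{p^e}$ using uniqueness of base~$p$ expansions. Your treatment is slightly more detailed (you explicitly handle $e>t$ and mention the Lucas connection), but the argument is the same.
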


\begin{proof}
Working in the polynomial ring $\FF_p[z]$, the binomial coefficient $\binom{d}{p^e}\mod p$ is the coefficient of $z^{p^e}$ in the expansion of $(1+z)^d$. Note that
\[
(1+z)^d\ =\ (1+z)^{\sum_e s_ep^e}\ =\ \prod_e(1+z)^{s_ep^e}\ =\ \prod_e{\left(1+z^{p^e}\right)}^{s_e}\ =\ \prod_e\sum_i\binom{s_e}{i}z^{ip^e}\,.
\]
When expanding the right hand side, each $z^\ell$ appears at most once by the uniqueness of the base $p$ expansion of $\ell$; specifically, $z^{p^e}$ occurs with coefficient $\binom{s_e}{1}=s_e$.
\end{proof}

\begin{proposition}
\label{proposition:subcategory}
The category of graded $\calF$-modules is a full subcategory of the category of $\calF$-modules, i.e., every $\calF$-module morphism of graded $\calF$-modules is degree-preserving.

Let $\calN\subset\calM$ be $\calF$-modules. If $\calM$ is a graded $\calF$-module, then $\calN$ and $\calM/\calN$ are graded $\calF$-modules.
\end{proposition}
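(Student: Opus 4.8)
The plan is to transport the problem to the category of $\calD$-modules via the functor $\xi$ and to exploit the Eulerian property. Since $\calM$ is a graded $\calF$-module, $\xi(\calM)$ is Eulerian by \cite[Theorem~4.4]{Ma-Zhang}, so $E_k(m)=\binom{\deg m}{k}\,m$ for every homogeneous $m\in\calM$ and every $k\ge1$; here $\binom{\deg m}{k}$ denotes the integer $\deg m(\deg m-1)\cdots(\deg m-k+1)/k!$, which acts on the $\FF_p$-vector space $\calM$ through its residue modulo $p$, and note that $\deg m$ may well be negative. Moreover, any $\calF$-module morphism $\phi$ yields the $\calD$-module morphism $\xi(\phi)$ with the same underlying map of sets, so $\phi$ is automatically $\calD$-linear; similarly an $\calF$-submodule of $\calM$ is a $\calD$-submodule of $\xi(\calM)$, hence stable under each operator $E_k$. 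The arithmetic input is that the power series $(1+z)^j\in\FF_p[[z]]$, $j\in\ZZ$ --- where for $j<0$ we set $(1+z)^j=\big((1+z)^{-j}\big)^{-1}$, so that the coefficient of $z^k$ in $(1+z)^j$ is $\binom{j}{k}$ reduced modulo $p$ --- are linearly independent over $\FF_p$: a finite relation $\sum_i c_i(1+z)^{j_i}=0$ with distinct $j_i$, multiplied by the unit $(1+z)^{-\min_i j_i}$, becomes $\sum_i c_i(1+z)^{j_i-\min_i j_i}=0$ in $\FF_p[z]\cong\FF_p[w]$ (with $w=1+z$), a relation among distinct powers of $w$, whence every $c_i=0$. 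Two consequences: (i) if $j\neq d$ in $\ZZ$, then $\binom{j}{k}\not\equiv\binom{d}{k}\pmod p$ for some $k$; and (ii) for distinct integers $j_1,\dots,j_r$, the matrix $\big(\binom{j_i}{k}\bmod p\big)_{1\le i\le r,\ k\ge 0}$ over $\FF_p$ has rank $r$, so its columns span $\FF_p^{\,r}$.

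For the first assertion, let $\phi\colon\calM\to\calM'$ be a morphism of graded $\calF$-modules and let $m\in\calM$ be homogeneous of degree $d$. Write $\phi(m)=\sum_j n_j$ with $n_j\in{[\calM']}_j$, a finite sum. Using that $\phi$ is $\calD$-linear together with the Eulerian property of $\calM$ and $\calM'$,
\[
\sum_j\binom{d}{k}n_j\ =\ \binom{d}{k}\phi(m)\ =\ \phi\big(E_k(m)\big)\ =\ E_k\big(\phi(m)\big)\ =\ \sum_j\binom{j}{k}n_j
\]
for every $k\ge1$; comparing graded components, $\big(\binom{d}{k}-\binom{j}{k}\big)n_j=0$ for all $j$ and all $k$. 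As $\calM'$ is an $\FF_p$-vector space, consequence (i) forces $n_j=0$ for $j\neq d$, so $\phi(m)=n_d\in{[\calM']}_d$; thus $\phi$ is degree-preserving.

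For the second assertion, let $\calN\subset\calM$ be an inclusion of $\calF$-modules with $\calM$ graded, and give $\calN$ and $\calM/\calN$ the gradings induced from $\calM$. The point is that $\calN$ is a graded $R$-submodule, i.e.\ it contains the homogeneous components of each of its elements. Fix $n\in\calN$ and write $n=\sum_{i=1}^r m_{j_i}$ with $0\neq m_{j_i}\in{[\calM]}_{j_i}$ and distinct $j_i$. The $\FF_p$-span $V$ of $n$ together with the elements $E_k(n)=\sum_i\binom{j_i}{k}m_{j_i}$, $k\ge1$, lies in $\calN$ (since $\calN$ is stable under each $E_k$) and also in the $r$-dimensional space $\sum_i\FF_p\,m_{j_i}$; by consequence (ii) the listed generators of $V$ span that space, so each $m_{j_i}\in V\subseteq\calN$. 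Hence $\calN$ is graded. Since Frobenius is flat over the regular ring $R$, the functor $F$ is exact, so $F(\calN)=R'\otimes_R\calN$ is a graded submodule of $F(\calM)$, the structure isomorphism $\theta$ of $\calM$ restricts to a degree-preserving isomorphism $\calN\to F(\calN)$, and it descends to a degree-preserving isomorphism $\calM/\calN\to F(\calM)/F(\calN)=F(\calM/\calN)$; thus $\calN$ and $\calM/\calN$ are graded $\calF$-modules. The one delicate point is the arithmetic fact above: because the graded $\calF$-modules one cares about --- such as $H^n_\frakm(R)$ --- are concentrated in negative degrees, Lucas' theorem, i.e.\ Lemma~\ref{lemma:binom}, does not suffice, since $\binom{j}{p^e}\equiv-1\pmod p$ for every negative integer $j$ and all large $e$; one must use $\binom{j}{k}$ for all $k$, which is exactly what linear independence of the $(1+z)^j$ delivers.
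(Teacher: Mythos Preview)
Your argument is correct and follows the paper's route: pass to $\calD$-modules via $\xi$, use the Eulerian identity $E_k(m)=\binom{\deg m}{k}m$, and separate degrees via arithmetic of binomial coefficients modulo $p$. The differences are organizational. The paper reads off the base-$p$ digits of $d$ via Lemma~\ref{lemma:binom} (Lucas' theorem with $k=p^e$), whereas you invoke the linear independence of the power series $(1+z)^j$ in $\FF_p[[z]]$; and for the submodule assertion the paper runs an induction on the number of homogeneous summands by forming $\binom{d_v}{k}m-E_k(m)$, while your consequence~(ii) extracts all components at once via a column-rank argument. One small correction to your closing remark: Lucas' theorem extends verbatim to arbitrary integers via their infinite $p$-adic expansions, and then the sequence $\big(\binom{j}{p^e}\big)_{e\ge0}$ still determines $j$ uniquely --- two distinct negative integers differ in some $p$-adic digit at a finite position $e$, notwithstanding that $\binom{j}{p^e}\equiv p-1$ for all sufficiently large $e$. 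So the approach using only $k=p^e$ does suffice; the genuine (and minor) issue is just that Lemma~\ref{lemma:binom} as stated covers only positive $d$, and your $(1+z)^j$ device is a clean alternative that sidesteps this.
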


By the above proposition, the category of graded $\calF$-modules is closed, in the category of $\calF$-modules, under the formation of subquotients; it is not closed under extensions; see Example~\ref{example:Ma}, which uses~\cite[Example~4.8]{Ma}.

\begin{proof}
Let $\phi\colon\calM\to\calM'$ be an $\calF$-module map, where $\calM,\calM'$ are graded $\calF$-modules; we need to show that $\phi$ is degree-preserving. Let $m$ be a homogeneous element of~$\calM$ of degree $d$. Express $\phi(m)$ as a sum of homogeneous elements,
\[
\phi(m)\ =\ m_1+\dots+m_v\,,
\]
where $m_i\in\calM'$ is homogeneous of degree $d_i$, and the integers $d_i$ are pairwise distinct. Since $\xi$ is a functor and $\xi(\calM)$ and $\xi(\calM')$ agree with $\calM$ and $\calM'$ respectively as sets, the map $\phi$ is a $\calD$-module map. It follows that
\[
\phi(E_k(m))\ =\ E_k(\phi(m))\qquad\text{ for each }\ k\ge 1\,.
\]
Expanding each side, one has
\[
\sum_i\binom{d}{k}m_i\ =\ \sum_i\binom{d_i}{k}m_i\,,
\]
and hence
\[
\binom{d}{k}\equiv\binom{d_i}{k}\ \mod p\qquad\text{ for each }\ i,k\,.
\]
Lemma~\ref{lemma:binom} implies that $d_i=d$ for each $i$, and hence also that $v=1$. Thus, the element~$\phi(m)$ is homogeneous of degree $d$, which proves the first assertion.

We next show that $\calN$ is a graded $\calF$-module. Given $m\in\calN$, write it as a sum of homogeneous elements
\[
m\ =\ m_1+\dots+m_v\,,
\]
where $m_i\in\calM$ is homogeneous of degree $d_i$, and the integers $d_i$ are pairwise distinct; we need to show that $m_i\in\calN$ for each $i$. By a slight abuse of notation we denote $\xi(\calN)$ and~$\xi(\calM)$ by $\calN$ and $\calM$ respectively. Since $\calN$ is a $\calD$-submodule of $\calM$, and $m\in\calN$, it follows that $E_k(m)\in\calN$ for each $k\ge1$. But then
\[
\binom{d_v}{k}m-E_k(m)\ =\ \sum_{i=1}^v\left[\binom{d_v}{k}-\binom{d_i}{k}\right]m_i \ =\ \sum_{i=1}^{v-1}\left[\binom{d_v}{k}-\binom{d_i}{k}\right]m_i
\]
is an element of $\calN$ for each $k\ge1$; by Lemma~\ref{lemma:binom}, $\binom{d_v}{k}-\binom{d_i}{k}$ is nonzero for some choice of~$k$. As the displayed element is a sum of at most $v-1$ homogeneous elements, an induction on $v$ shows that $m_i\in\calN$ for each $i$. The final assertion, namely that $\calM/\calN$ is a graded $\calF$-module, follows immediately.
\end{proof}

The proof of the previous proposition also yields:

\begin{proposition}
A $\calD$-module map between Eulerian $\calD$-modules is degree-preserving.

Let $\calN\subset\calM$ be $\calD$-modules. If $\calM$ is Eulerian, then so are $\calN$ and $\calM/\calN$.
\end{proposition}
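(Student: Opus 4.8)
The plan is to observe that the two assertions are exactly the statements of Proposition~\ref{proposition:subcategory}, with the $\calF$-module structure maps replaced by the Eulerian property, and that the proof of that proposition already uses nothing about the structure isomorphism $\theta$ beyond the fact that it forces $\xi(\calM)$ to be Eulerian. So the argument is a transcription of the earlier proof. Concretely, let $\phi\colon\calM\to\calM'$ be a $\calD$-module map between Eulerian $\calD$-modules, and let $m\in\calM$ be homogeneous of degree $d$. Write $\phi(m)=m_1+\dots+m_v$ with $m_i$ homogeneous of pairwise distinct degrees $d_i$. Since $\phi$ is $\calD$-linear, $\phi(E_k(m))=E_k(\phi(m))$ for every $k\ge 1$; expanding both sides using the Eulerian property gives $\binom{d}{k}m_i=\binom{d_i}{k}m_i$ (after collecting the degree-$d_i$ component, which is legitimate since the $d_i$ are distinct and $\calM'$ is graded), so $\binom{d}{k}\equiv\binom{d_i}{k}\bmod p$ for all $i$ and all $k\ge1$. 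Taking $k$ to be the relevant powers of $p$ and invoking Lemma~\ref{lemma:binom} forces $d_i=d$ for each $i$, hence $v=1$ and $\phi(m)$ is homogeneous of degree $d$.

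For the second assertion, let $\calN\subset\calM$ be $\calD$-modules with $\calM$ Eulerian. Given $m\in\calN$, write $m=m_1+\dots+m_v$ as a sum of homogeneous components of $\calM$ of pairwise distinct degrees $d_i$; we must show each $m_i\in\calN$. Since $\calN$ is a $\calD$-submodule, $E_k(m)\in\calN$ for all $k\ge1$, so
\[
\binom{d_v}{k}m - E_k(m)\ =\ \sum_{i=1}^{v-1}\left[\binom{d_v}{k}-\binom{d_i}{k}\right]m_i
\]
lies in $\calN$ for every $k\ge1$, using that $E_k$ acts on $\calM$ by $\binom{\deg}{k}$ on homogeneous elements. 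By Lemma~\ref{lemma:binom}, for each $i<v$ there is a $k$ with $\binom{d_v}{k}\not\equiv\binom{d_i}{k}\bmod p$; more carefully, choosing $k=p^e$ for a suitable $e$ one can arrange $\binom{d_v}{k}-\binom{d_i}{k}$ to be a unit modulo $p$ for a chosen $i$, while keeping the sum a combination of the remaining homogeneous pieces. An induction on $v$ — the base case $v=1$ being trivial — then shows $m_i\in\calN$ for all $i$. Consequently $\calN$ is a graded $\calD$-submodule, and it inherits the Eulerian property because its homogeneous elements are homogeneous in $\calM$; finally $\calM/\calN$ is Eulerian because the Euler operators act on residue classes of homogeneous elements by the same binomial scalars.

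The only step requiring a little care — and the one I expect to be the main (minor) obstacle — is the combinatorial extraction in the second part: from the congruences $\binom{d_v}{k}\equiv\binom{d_i}{k}\bmod p$ failing for some $k$, one needs to isolate a single $m_i$ at each stage of the induction rather than merely concluding that \emph{some} coefficient is nonzero. This is handled exactly as in the proof of Proposition~\ref{proposition:subcategory}: order the degrees, peel off the component $m_v$ whose degree has the largest relevant base-$p$ digit in some position, and apply Lemma~\ref{lemma:binom} with $k$ that power of $p$; the displayed element then has strictly fewer homogeneous summands, and induction applies. No new ideas beyond those already deployed are needed, which is why the statement is phrased as a corollary of the preceding argument.
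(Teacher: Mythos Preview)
Your proposal is correct and follows exactly the approach the paper intends: the paper simply remarks that the proof of the preceding proposition carries over verbatim, and your transcription does precisely that. Your final paragraph overcomplicates the induction slightly---no ordering of degrees is needed, since for any fixed $k$ the induction hypothesis already gives $\big[\binom{d_v}{k}-\binom{d_i}{k}\big]m_i\in\calN$ for every $i<v$, and then varying $k$ via Lemma~\ref{lemma:binom} strips the coefficients---but this does not affect correctness.
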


\subsection*{$\calF$-finite modules}

An $\calF$-module $(\calM,\theta)$ is \emph{$\calF$-finite} if $\calM$ is the direct limit of the top row in the commutative diagram
\[
\begin{CD}
M@>\beta>>F(M)@>F(\beta)>>F^2(M)@>>>\cdots\\
@V\beta VV @VF(\beta)VV @VF^2(\beta)VV\\
F(M)@>F(\beta)>>F^2(M)@>F^2(\beta)>>F^3(M)@>>>\cdots
\end{CD}
\]
where $M$ is a finitely generated $R$-module, $\beta\colon M\to F(M)$ is an $R$-module homomorphism, and the structure isomorphism $\theta$ is induced by the vertical maps in the diagram, see~\cite[Definition~2.1]{Lyubeznik:Crelle}. When $M$ is graded and $\beta$ is degree-preserving, we say that the $\calF$-module $\calM$ is \emph{graded $\calF$-finite}.

The map $\beta\colon M\to F(M)$ above is a \emph{generating morphism} of $\calM$. If $\beta$ is injective, we say that $M$ is a \emph{root} of $\calM$, and that $\beta$ is a \emph{root morphism}. The image of $M$ in $\calM$ will also be called a root of $\calM$. A \emph{minimal root} of $\calM$ is a root $M$ such that no other root of~$\calM$ is contained in~$M$. The minimal root is unique, see~\cite[Theorem~3.5]{Lyubeznik:Crelle}. If $\calM$ is a graded $\calF$-finite module, then its minimal root $M$ is graded, and $\beta\colon M\to F(M)$ is degree-preserving; we say $\beta$ is the \emph{minimal root morphism}~of~$\calM$.

A basic result in the theory of $\calF$-modules says that an $\calF$-finite module $\calM$ has finite length in the category of $\calF$-modules. This means, in particular, that every filtration of $\calM$ in the category of $\calF$-modules can be completed to a maximal filtration
\[
0=\calM_0\subset \calM_1\subset \dots\subset \calM_\ell=\calM\,.
\]
Every maximal filtration has the same length $\ell$, which is defined to be the \emph{length} of $\calM$. The set of the composition factors
\[
\{\calM_1/\calM_0,\dots,\calM_{\ell}/\calM_{\ell-1}\}
\]
depends only on $\calM$, and not on the maximal filtration. It follows from Proposition~\ref{proposition:subcategory} that the composition factors of a graded $\calF$-finite module are all graded.

Set $\frakm$ to be the homogeneous maximal ideal of $R$, and $^*E$ to be the injective hull of~$R/\frakm$ in the category of graded $R$-modules. Shifting the grading by $n$, one has a degree-preserving isomorphism
\[
^*E(n)\ \cong\ H^n_\frakm(R)\,,
\]
see, for example, \cite[Theorem~1.2.7]{GW}. Set
\[
\grD(-)\ =\ \Hom_R(-,\ H^n_\frakm(R))\,,
\]
which is the graded Matlis duality functor; this is a contravariant exact functor. If $M$ is a graded $R$-module that is cofinite (respectively, finitely generated), then $\grD(M)$ is graded and finitely generated (respectively, cofinite). For a graded module $M$ that is cofinite or finitely generated, one has
\[
\grD(\grD(M))\ =\ M\,,
\]
see~\cite[Theorem~1.2.10]{GW}; in particular, there is a one-to-one correspondence between graded submodules of $M$ and graded quotients of $\grD(M)$, namely, an inclusion $N\to M$ corresponds to a surjection $\grD(M)\to\grD(N)$.

The following is a version of \cite[Lemma~4.1]{Lyubeznik:Crelle}; the proof is similar when~$M$ is cofinite, and is readily adapted to the case where $M$ is a finitely generated $R$-module.

\begin{lemma}
\label{lemma:tau}
Let $M$ be a graded $R$-module that is either cofinite or finitely generated. Then there is an $R$-module isomorphism
\[
\grtau\colon\grD(F(M))\to F(\grD(M))
\]
that is degree-preserving, and functorial in $M$.
\end{lemma}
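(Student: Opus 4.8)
Since $F(M) = R' \otimes_R M$ where $R'$ is the Frobenius bimodule, I would first unwind the graded Matlis dual of $F(M)$ using Hom-tensor adjunction. The issue is that $R'$ is a bimodule, so $\grD(F(M)) = \Hom_R(R' \otimes_R M, {^*E}(n))$ needs to be handled carefully: applying adjunction should give $\Hom_R(M, \Hom_R(R', {^*E}(n)))$, where the inner Hom is taken with respect to the \emph{right} $R$-action on $R'$. The crux is therefore to identify $\Hom_R(R', H^n_\frakm(R))$, with respect to the right module structure, as a copy of $F(H^n_\frakm(R))$, compatibly with gradings. Concretely, I would construct a natural degree-preserving $R$-isomorphism $F(-) \to (- \text{ Matlis dualized twice through } R')$ at the level of the relevant functors, then specialize to $M$.

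**Key steps, in order.** First, reduce to the two cases $M$ cofinite and $M$ finitely generated; by the remark before the lemma, the cofinite case mirrors \cite[Lemma~4.1]{Lyubeznik:Crelle} and the f.g. case is a formal adaptation, so I would write the argument uniformly where possible. Second, establish the \emph{ungraded} isomorphism $\grtau$ by the adjunction computation above: write $\grD(F(M)) = \Hom_R(R'\otimes_R M, {^*E}(n)) \cong \Hom_R(M, \Hom_R(R',{^*E}(n)))$ and then use the self-duality/flatness properties of Frobenius on the regular ring $R$ — in particular that $R'$ is flat as a right $R$-module and that $\Hom_R(R', {^*E}(n))$ is again an injective hull type object — to rewrite the inner term as $R' \otimes_R {^*E}(n)$, i.e.\ as $F$ applied to the dualizing module, giving $\cong R' \otimes_R \Hom_R(M, {^*E}(n)) = F(\grD(M))$. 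Third, \textbf{track the grading}: on $F(M)$ the grading is $\deg(r'\otimes m) = \deg r' + p\deg m$, and on $\Hom$ the grading is the usual one where a homogeneous map of degree $j$ shifts degrees by $j$; I would verify that each isomorphism in the chain — the adjunction, the identification of the inner Hom, and the final untwisting — carries a homogeneous element of degree $j$ to one of degree $j$, paying attention to the factor of $p$ and the shift by $n$. Fourth, check functoriality in $M$: each arrow in the chain is built from adjunction and from fixed structural isomorphisms of $R'$ and ${^*E}(n)$, none of which involve $M$, so naturality is automatic once the maps are written down canonically.

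**Main obstacle.** The genuine work is the grading bookkeeping in the second and third steps: the Frobenius twist multiplies degrees by $p$, graded Matlis duality negates degrees and shifts by $n$, and the adjunction isomorphism must be checked to be degree-preserving rather than merely degree-homogeneous-of-some-fixed-shift. I expect the cleanest route is to fix homogeneous $R$-bases or canonical generators (e.g.\ the socle generator of ${^*E}$ in its natural degree, and the element $1 \in R'$) and compute the degree of the image of a homogeneous generator through the whole chain, confirming the two $p$-scalings and the two $n$-shifts cancel to yield an honest degree-$0$ (i.e.\ degree-preserving) isomorphism $\grtau$. A secondary, more routine point is confirming that $\Hom_R(R', {^*E}(n))$ — with respect to the right action making $R'$ a module — really is isomorphic to $R' \otimes_R {^*E}(n)$ as a \emph{graded} object; this follows from $R$ being regular (so Frobenius is flat, hence $R'$ is a finite flat, in fact free, right $R$-module) together with the behavior of Matlis duality on free modules, but the degree shifts introduced by a right-$R$-basis of $R'$ must be recorded correctly. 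Once these degree computations are in hand, assembling $\grtau$ and verifying it commutes with the structural data is straightforward.
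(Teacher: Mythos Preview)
Your adjunction computation is correct up to the isomorphism $\grD(F(M)) \cong \Hom_R\big(M,\,\Hom_R(R',E)\big)$, where $E = H^n_\frakm(R)$; but the next step fails. The inner $\Hom_R(R',E)$ is taken over the \emph{left} action on $R'$ (this is what the tensor--Hom adjunction actually gives; your parenthetical about the right action is backwards), so as an abelian group it is $\Hom_R(R,E)=E$, and the $R$-module structure it inherits from the right action on $R'$ is the twisted one $r\cdot e = r^p e$. This module --- call it ${}^fE$ --- is \emph{not} isomorphic to $F(E)=R'\otimes_RE$. Indeed $F(E)\cong E$ via the structure isomorphism of the $\calF$-module $H^n_\frakm(R)$, so $F(E)$ has one-dimensional socle, whereas the socle of ${}^fE$ is $(0:_E\frakm^{[p]})$, which strictly contains the socle of $E$. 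Hence the proposed chain
\[
\Hom_R\big(M,\Hom_R(R',E)\big)\ \cong\ \Hom_R\big(M,F(E)\big)\ \cong\ F\big(\Hom_R(M,E)\big)
\]
breaks at the first isomorphism.

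The paper gives no proof, referring to \cite[Lemma~4.1]{Lyubeznik:Crelle}; that argument avoids the adjunction route. One verifies instead that the two contravariant exact functors $\grD\circ F$ and $F\circ\grD$ agree on free modules: both carry a map $R^a\to R^b$ with matrix $A$ to the map $E^b\to E^a$ with matrix $(A^{[p]})^T=(A^T)^{[p]}$, the identification $F(E)\cong E$ being invoked only here, as a fixed structural datum rather than as an instance of a general $\Hom$--$\otimes$ coincidence. A free presentation then handles finitely generated $M$, and applying $\grD$ once more gives the cofinite case. Your plan for tracking the grading and for functoriality is sound and is exactly the addendum needed to pass from Lyubeznik's ungraded statement to the graded one; only the structural identification of the inner $\Hom$ must be replaced.
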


\subsection*{The functor $\calH(-)$}

We set $R\{f\}$ to be the ring extension of $R$ generated by an element~$f$ subject to the relations $fr=r^pf$ for each $r\in R$. By an $R\{f\}$-module we mean a left~$R\{f\}$-module. Thus, an $R\{f\}$-module is an $R$-module $M$ equipped with a Frobenius action, i.e.,~with an additive map~$f\colon M\to M$ such that $f(rm)=r^pf(m)$ for each $m\in M$.

By a \emph{graded} $R\{f\}$-module, we mean a graded $R$-module $M$ such that
\[
f\colon{[M]}_d\to{[M]}_{pd}\qquad\text{ for each integer }\ d\,.
\]
It is straightforward to check that the induced $R$-module homomorphism
\[
F(M)=R'\otimes_RM\to M\,,\qquad\text{ where } r'\otimes m\mapsto r'f(m)\,,
\]
is degree-preserving i.e., it is a morphism in the category of graded $R$-modules. Applying the graded Matlis duality functor $\grD$ to this morphism, the induced natural map
\[
\grD(M)\to{\grD(F(M))}
\]
is degree-preserving. Following this map with $^*\tau$ produces the natural map
\[
\beta_M\colon\grD(M)\to F(\grD(M))\,,
\]
that, again, is degree-preserving. If $M$ is cofinite, then $\grD(M)$ is finitely generated, and we set $\calH(M)$ to be the $\calF$-finite module with generating morphism $\beta_M$. As $\beta_M$ is degree-preserving, the module $\calH(M)$ is graded. Thus, $\calH(-)$ is a functor from the category of graded cofinite $R\{f\}$-modules to the category of graded $\calF$-finite modules.

Let $M$ be an $R\{f\}$-module. An element $m$ of $M$ that is annihilated by some power of~$f$ is said to be \emph{nilpotent}; the module $M$ is \emph{nilpotent} if $f^e(M)=0$ for some~$e$. The set of nilpotent elements of $M$ is an $R\{f\}$-submodule of~$M$, this is the nilpotent part of $M$, denoted $M_\nil$. The \emph{reduced} $R\{f\}$-module
\[
M_\red\ =\ M/M_\nil
\]
has no nonzero nilpotent elements. Set $M^{f^e}$ to be the $R$-submodule generated by the set~$f^e(M)$. We use $M^\st$ to denote the intersection of the descending chain
\[
M\ \supseteq\ M^f\ \supseteq\ M^{f^2}\ \supseteq\ \cdots\,.
\]
Each $M^{f^e}$ is an $R\{f\}$-module, hence so is $M^\st$. It is straightforward to verify that
\[
(M_\red)^\st\ =\ (M^\st)_\red\,,
\]
and we denote this $R\{f\}$-module by $M^\st_\red$. If $M$ is a graded $R\{f\}$-module, then so are the modules $M_\red$, $M^\st$, and $M^\st_\red$. The following is a graded version of \cite[Theorem~4.2]{Lyubeznik:Crelle}:

\begin{theorem}
Consider the functor $\calH(-)$ from the category of graded cofinite $R\{f\}$-modules to the category of graded $\calF$-finite modules. Then:
\begin{enumerate}[\quad\rm(1)]
\item The functor $\calH(-)$ is contravariant, additive, and exact.
\item $\calH(M)=0$ if and only if $M$ is nilpotent.
\item The minimal root morphism of $\calH(M)$ is
\[
\beta_{M^\st_\red}\colon\grD(M^\st_\red)\to F(\grD(M^\st_\red))\,.
\]
\item $\calH(M)$ is isomorphic to $\calH(M')$ in the category of $\calF$-modules if and only if $M^\st_\red$ is isomorphic to $(M')^\st_\red$ in the category of $R\{f\}$-modules.
\end{enumerate}
\end{theorem}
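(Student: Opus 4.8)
The plan is to run the proof of the ungraded statement \cite[Theorem~4.2]{Lyubeznik:Crelle} essentially verbatim, replacing each ingredient by the graded analogue developed earlier in this section: graded Matlis duality $\grD$, which is exact, satisfies $\grD\circ\grD=\mathrm{id}$ on cofinite and on finitely generated graded modules, and induces a bijection between graded submodules of a module and graded quotients of its dual; the degree-preserving, functorial isomorphism $\grtau$ of Lemma~\ref{lemma:tau} in place of \cite[Lemma~4.1]{Lyubeznik:Crelle}; and Proposition~\ref{proposition:subcategory}, by which every $\calF$-submodule or $\calF$-quotient of a graded $\calF$-module is again graded and every morphism of graded $\calF$-modules is automatically degree-preserving. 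From the construction of $\calH(-)$ given above we already know that $\beta_M$ is degree-preserving, so $\calH(M)$ is a graded $\calF$-finite module; what must be checked is that the four assertions persist once gradings are imposed.

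For (1): kernels and cokernels in the category of $\calF$-modules are computed on underlying $R$-modules and, by Proposition~\ref{proposition:subcategory}, remain graded; since $\grD$ and $F(-)=R'\otimes_R(-)$ are exact, contravariance, additivity, and exactness of $\calH(-)$ follow as in \cite[Theorem~4.2(1)]{Lyubeznik:Crelle}. For (2): nilpotence of an $R\{f\}$-module is a condition that does not see the grading, and the equivalence $\calH(M)=0\iff M$ nilpotent is obtained exactly as in \cite[Theorem~4.2(2)]{Lyubeznik:Crelle}. For (4): one implication is formal, and for the other a (non-graded) $R\{f\}$-isomorphism $M^\st_\red\cong(M')^\st_\red$ is extracted from an $\calF$-isomorphism $\calH(M)\cong\calH(M')$ by transporting it through $\grD$ and passing to stable and reduced parts, as in \cite[Theorem~4.2(4)]{Lyubeznik:Crelle}; since the statement only asks for an isomorphism in the category of $R\{f\}$-modules, nothing more is needed here.

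Assertion (3) is where the graded structure enters essentially. Because $M$ is a graded $R\{f\}$-module, so are $M^\st$, $M_\red$, and $M^\st_\red$; hence $\grD(M^\st_\red)$ is a finitely generated graded $R$-module and $\beta_{M^\st_\red}$ is a degree-preserving root morphism. By \cite[Theorem~4.2(3)]{Lyubeznik:Crelle}, $\grD(M^\st_\red)$, with root morphism $\beta_{M^\st_\red}$, is the minimal root of $\calH(M)$ in the category of $\calF$-modules. It then suffices to note that this root is also minimal among graded roots: any graded root of $\calH(M)$ is in particular a root, hence contains $\grD(M^\st_\red)$; and $\grD(M^\st_\red)$ is itself a graded root; so it is the minimal root of $\calH(M)$ in the graded category, with minimal root morphism $\beta_{M^\st_\red}$.

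I expect the only real obstacle to be organizational rather than conceptual: one must verify that each step of Lyubeznik's argument --- notably the identification $\grD(F(M))\cong F(\grD(M))$ and the induced maps on stable and reduced parts --- carries over unchanged once $\tau$ is replaced by the degree-preserving $\grtau$ of Lemma~\ref{lemma:tau} and Proposition~\ref{proposition:subcategory} is invoked to keep all subquotients graded. Beyond this bookkeeping and the small minimal-root remark in (3), there is no new difficulty.
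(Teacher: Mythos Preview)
Your proposal is correct and follows essentially the same approach as the paper: both run the proof of \cite[Theorem~4.2]{Lyubeznik:Crelle} with graded Matlis duality $\grD$ and the degree-preserving $\grtau$ of Lemma~\ref{lemma:tau} in place of their ungraded counterparts. The only organizational difference is that the paper explicitly records the graded analogue of \cite[Lemma~4.3]{Lyubeznik:Crelle} as a separate lemma (Lemma~\ref{lemma:roots}) and invokes it for parts (3) and (4), whereas you absorb this step into the general bookkeeping.
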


The proofs of assertions (1) and (2) are, aside from minor modifications, the same as those of \cite[Theorem~4.2~(i), (ii)]{Lyubeznik:Crelle}, while the proofs of (3) and (4) require the following lemma that is a graded analogue of \cite[Lemma~4.3]{Lyubeznik:Crelle}. We point out that
\[
\beta_M\colon\grD(M)\to F(\grD(M))
\]
is injective if and only if $M^\st=M$, see \cite[page~105,~lines 3--6]{Lyubeznik:Crelle}.

\begin{lemma}
\label{lemma:roots}
Let $M$ be a graded cofinite $R\{f\}$-module with $M=M^\st$; it follows that $\beta_M$ is a root morphism of $\calH(M)$. Let $N$ be a graded $R$-submodule of $\grD(M)$.
\begin{enumerate}[\quad\rm(1)]
\item $N$ is a root of an $\calF$-submodule $\calN$ of $\calH(M)$ if and only if $N=\grD(M'')$, where $M''$ is a homomorphic image of $M$ in the category of $R\{f\}$-modules; in this case, $\beta_{M''}$ is a root morphism of $\calN$.

\item $N$ is a root of $\calH(M)$ if and only if $N=\grD(M/M')$, where $M'$ is a nilpotent $R\{f\}$-submodule of $M$; in this case, $\beta_{M/M'}$ is a root morphism of $\calH(M/M')$.

\item $N$ is the minimal root of $\calH(M)$ if and only if $N=\grD(M_\red)$; in this case, the morphism $\beta_{M_\red}$ is the minimal root morphism of $\calH(M)$.
\end{enumerate}
\end{lemma}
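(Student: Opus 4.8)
The plan is to adapt the proof of \cite[Lemma~4.3]{Lyubeznik:Crelle} verbatim, checking at each step that everything stays inside the graded category. The key facts to carry along are: (a) $\grD(-)=\Hom_R(-,H^n_\frakm(R))$ is an exact contravariant functor that, by \cite[Theorem~1.2.10]{GW}, gives a bijection between graded submodules of $\grD(M)$ and graded homomorphic images of $M$, as recorded in the excerpt; (b) the natural isomorphism $\grtau\colon\grD(F(M))\to F(\grD(M))$ of Lemma~\ref{lemma:tau} is degree-preserving and functorial, so $\beta_M$ commutes with all the relevant diagrams in the graded category; and (c) by Proposition~\ref{proposition:subcategory}, an $\calF$-submodule of a graded $\calF$-finite module is automatically graded, so ``$\calF$-submodule $\calN$'' and ``graded $\calF$-submodule'' mean the same thing here, and likewise for the minimal root.

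For part (1): if $N=\grD(M'')$ for a graded $R\{f\}$-quotient $\pi\colon M\twoheadrightarrow M''$, then applying $\grD$ gives a graded inclusion $\grD(M'')\hookrightarrow\grD(M)$, and functoriality of $\beta_{(-)}$ in the $R\{f\}$-module produces a commuting square relating $\beta_{M''}$ and $\beta_M$; since $M''=(M'')^\st$ is inherited from $M=M^\st$ being preserved under quotients (as $f^e(M'')=\pi(f^e(M))$, the descending chains map onto each other), $\beta_{M''}$ is injective, hence a root morphism, and the direct limit of its Frobenius powers is an $\calF$-submodule $\calN\subseteq\calH(M)$ with root $N$. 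Conversely, if $N$ is a root of an $\calF$-submodule $\calN\subseteq\calH(M)$, then $N$ is graded by Proposition~\ref{proposition:subcategory}, so $N=\grD(M'')$ for some graded $R\{f\}$... — more precisely, $N$ is an $R\{f\}$-submodule of $\grD(M)$ closed under $\beta_M$, hence of the form $\grD(M'')$ with $M''$ a graded $R\{f\}$-quotient of $M$, and one checks the Frobenius action matches. This is exactly the ungraded argument with ``graded'' inserted throughout.

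For part (2): a root $N$ of $\calH(M)$ itself (as opposed to a proper $\calF$-submodule) corresponds under (1) to a quotient $M''$ with $\calH(M'')\cong\calH(M)$; by assertion (2) of the preceding theorem (that $\calH(-)=0$ iff nilpotent) applied to the exact sequence $0\to M'\to M\to M''\to 0$ with $M'=\ker$, one gets $\calH(M')=0$, i.e. $M'$ is a nilpotent graded $R\{f\}$-submodule, and conversely any nilpotent $M'$ gives $M/M'$ with $\calH(M/M')\cong\calH(M)$ and $(M/M')^\st=M/M'$ (nilpotents don't affect the stable part), so $\grD(M/M')$ is a root. For part (3): the minimal root is the smallest root, and under the order-reversing bijection of (2), roots $\grD(M/M')$ with $M'$ nilpotent correspond to nilpotent submodules $M'$, the largest of which is $M_\nil$; so the minimal root is $\grD(M/M_\nil)=\grD(M_\red)$, with minimal root morphism $\beta_{M_\red}$, using uniqueness of the minimal root from \cite[Theorem~3.5]{Lyubeznik:Crelle} together with Proposition~\ref{proposition:subcategory} to know it is graded.

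The main obstacle I anticipate is purely bookkeeping rather than conceptual: one must be careful that the bijection ``graded submodules of $\grD(M)\leftrightarrow$ graded quotients of $M$'' respects not just the $R$-module structure but the full $R\{f\}$-structure, i.e. that an $R$-submodule $N\subseteq\grD(M)$ is $\beta_M$-stable precisely when the corresponding quotient $M''$ is an $R\{f\}$-quotient — this is where Lemma~\ref{lemma:tau}'s functoriality and the definition of $\beta_M$ via $\grtau$ must be unwound. Once that compatibility is in hand, everything else follows the template of \cite[Lemma~4.3]{Lyubeznik:Crelle} with no new ideas, so I would keep the write-up terse and point the reader to the ungraded source for the routine diagram chases.
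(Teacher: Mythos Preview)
Your proposal is correct and matches the paper's approach exactly: the paper simply states that the proof parallels \cite[Lemma~4.3]{Lyubeznik:Crelle}, and your plan to transport that argument into the graded category---using Lemma~\ref{lemma:tau}, Proposition~\ref{proposition:subcategory}, and the graded Matlis duality bijection---is precisely what is intended. The bookkeeping you flag (that $\beta_M$-stability of $N\subseteq\grD(M)$ corresponds to the dual quotient being an $R\{f\}$-quotient) is the only place requiring care, and it is handled in the ungraded source by the functoriality of $\tau$; the graded version goes through identically once $\grtau$ is known to be degree-preserving.
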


The proof of the lemma parallels that of \cite[Lemma~4.3]{Lyubeznik:Crelle}. 

\begin{proposition}
\label{proposition:H:onto}
The functor $\calH(-)$ from the category of graded cofinite $R\{f\}$-modules to the category of graded $\calF$-finite modules is surjective. 
\end{proposition}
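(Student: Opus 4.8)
The plan is to produce, for an arbitrary graded $\calF$-finite module $\calN$, a graded cofinite $R\{f\}$-module $M$ with $\calH(M)\cong\calN$, by running the definition of $\calH$ in reverse.

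First I would take the minimal root morphism $\beta\colon N\to F(N)$ of $\calN$; by the discussion above, $N$ is a graded finitely generated $R$-module, $\beta$ is injective and degree-preserving, and $\calN=\dlim\bigl(N\xrightarrow{\ \beta\ }F(N)\xrightarrow{\ F(\beta)\ }F^2(N)\to\cdots\bigr)$. Set $M:=\grD(N)$; since $N$ is graded and finitely generated, $M$ is a graded cofinite $R$-module, and biduality gives $\grD(M)=N$. Let $\grtau\colon\grD(F(M))\to F(\grD(M))=F(N)$ be the degree-preserving isomorphism of Lemma~\ref{lemma:tau} applied to the module $M$. Because $M$ is cofinite, so that $\grD\grD(M)=M$, the tensor-hom adjunction identifies $\Hom_R(F(M),M)$ with $\Hom_R(\grD(M),\grD(F(M)))$ via $g\mapsto\grD(g)$, a degree-preserving bijection. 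I would let $\mu\colon F(M)\to M$ be the map corresponding under this bijection to $\grtau^{-1}\circ\beta\colon\grD(M)=N\to\grD(F(M))$, so that $\grD(\mu)=\grtau^{-1}\circ\beta$ and $\mu$ is $R$-linear and degree-preserving. Now, an $R$-linear map $F(M)=R'\otimes_RM\to M$ is exactly the datum of a Frobenius action $f$ on $M$, with $f(m)=\mu(1\otimes m)$ and $f(rm)=r^pf(m)$; and since $\deg(1\otimes m)=p\deg m$ while $\mu$ is degree-preserving, $f$ carries $[M]_d$ into $[M]_{pd}$ for every $d$. Thus $M$ with this $f$ is a graded cofinite $R\{f\}$-module, that is, an object of the source category of $\calH$.

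It then remains to verify $\calH(M)\cong\calN$. The canonical map $F(M)\to M$ attached to the Frobenius action $f$ is precisely $\mu$, so, by the very definition of the generating morphism $\beta_M$ of $\calH(M)$,
\[
\beta_M\ =\ \grtau\circ\grD(\mu)\ =\ \grtau\circ\grtau^{-1}\circ\beta\ =\ \beta\colon\grD(M)\to F(\grD(M))\,.
\]
Hence $\calH(M)$ and $\calN$ are both the direct limit of the system $N\xrightarrow{\beta}F(N)\xrightarrow{F(\beta)}\cdots$, and this identification is degree-preserving, so $\calH(M)\cong\calN$ in the category of graded $\calF$-modules; this proves that $\calH(-)$ is surjective. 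In passing, the module $M$ produced above satisfies $M=M^\st_\red$: the equality $\beta_M=\beta$ shows $\beta_M$ is injective, whence $M=M^\st$ by the criterion recorded just before Lemma~\ref{lemma:roots}, and comparing minimal roots via Lemma~\ref{lemma:roots}(3) then forces $M=M_\red$.

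The argument is essentially organizational, and I do not anticipate a serious obstacle. The points that demand care are: that $g\mapsto\grD(g)$ is genuinely a bijection and not merely injective, for which the cofiniteness of $M$ (hence $\grD\grD(M)=M$) is exactly what is used; that every application of $\grD\grD=\mathrm{id}$ is made only to finitely generated or cofinite modules; and that gradings and degree-preservation are tracked through each step. The single conceptual point is to recognize that sending $\calN$ to $\grD(N)$ equipped with the Frobenius action corresponding to $\beta$ under $\grtau$ and graded Matlis duality inverts, on objects, the recipe that defines $\calH$.
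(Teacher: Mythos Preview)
Your proposal is correct and follows essentially the same approach as the paper's proof: take a (degree-preserving) generating morphism $\beta$ of the given graded $\calF$-finite module, set the sought $R\{f\}$-module to be the graded Matlis dual of the source, and use $\grtau$ to convert $\beta$ into a Frobenius action on that dual. The paper writes the map $F(\grD(M))\to\grD(M)$ directly as the composite $\grD(\beta)\circ\grtau^{-1}$ and then simply asserts $\calH(\grD(M))=\calM$, whereas you package the same construction via the duality bijection $g\mapsto\grD(g)$ and actually verify $\beta_M=\beta$; your choice of the \emph{minimal} root and your closing remark that the resulting module satisfies $M=M^\st_\red$ are pleasant extras, but the underlying idea is identical.
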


\begin{proof}
Let $\beta\colon M\to F(M)$ be a generating morphism for a graded $\calF$-finite module $\calM$. Using Lemma~\ref{lemma:tau}, we have an $R$-module homomorphism $\gamma$ which is the composition
\[
\CD
R'\otimes_R\grD(M)@=F(\grD(M))@>{\grtau^{-1}}>>\grD(F(M))@>\grD(\beta)>>\grD(M)\,.
\endCD
\]
We define an additive map $f\colon\grD(M)\to\grD(M)$ by $f(\eta)=\gamma(1\otimes\eta)$. Note that
\[
f(r\,\eta)\ =\ \gamma(1\otimes r\,\eta)\ =\ \gamma(r^p\otimes\eta)\ =\ r^p\gamma(1\otimes\eta)\ =\ r^pf(\eta)\,,
\]
i.e., $\grD(M)$ has a natural $R\{f\}$-module structure. Observe that $\calH(\grD(M))=\calM$.
\end{proof}

\begin{proposition}
\label{proposition:fundamental:example}
Let $I$ be a homogeneous ideal of $R$. Then
\[
\calH(H^{n-k}_\frakm(R/I))\ \cong\ H^k_I(R)\,.
\]
\end{proposition}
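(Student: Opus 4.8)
The plan is to exhibit both $\calH(H^{n-k}_\frakm(R/I))$ and $H^k_I(R)$ as the graded $\calF$-finite module determined by one and the same generating morphism, and then to use the fact that a generating morphism determines its $\calF$-finite module. First note that $H^{n-k}_\frakm(R/I)$ is a graded cofinite $R\{f\}$-module: it is $\frakm$-torsion, hence Artinian, hence cofinite; it is graded since $R/I$ is; and it inherits from $R/I$ a Frobenius action with $f\colon{[H^{n-k}_\frakm(R/I)]}_d\to{[H^{n-k}_\frakm(R/I)]}_{pd}$. Thus $\calH(H^{n-k}_\frakm(R/I))$ is defined, and by construction it is the graded $\calF$-finite module whose generating morphism is
\[
\beta_{H^{n-k}_\frakm(R/I)}\colon\grD\bigl(H^{n-k}_\frakm(R/I)\bigr)\ \to\ F\bigl(\grD(H^{n-k}_\frakm(R/I))\bigr)\,,
\]
namely the map obtained by applying $\grD$ to the structural morphism $F(H^{n-k}_\frakm(R/I))\to H^{n-k}_\frakm(R/I)$, $r'\otimes m\mapsto r'f(m)$, and composing with the isomorphism $\grtau$ of Lemma~\ref{lemma:tau}.

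Next I would bring in graded local duality. Since $R$ is regular with ${[R]}_0=\FF$, it gives, for every homogeneous ideal $J$, a degree-preserving isomorphism $\grD(H^{n-k}_\frakm(R/J))\cong\Ext^k_R(R/J,R)$, natural in $J$. Because $R$ is regular, the Frobenius functor $F$ is exact, carries $R/I$ to $R/I^{[p]}$, carries a free resolution of $R/I$ to one of $R/I^{[p]}$, and (together with $\grtau$) commutes both with $H^{n-k}_\frakm(-)$ and with $\Ext^k_R(-,R)$; this produces degree-preserving isomorphisms $F(\grD(H^{n-k}_\frakm(R/I)))\cong\grD(H^{n-k}_\frakm(R/I^{[p]}))\cong\Ext^k_R(R/I^{[p]},R)$ and $F(\Ext^k_R(R/I,R))\cong\Ext^k_R(R/I^{[p]},R)$. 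The crucial step---and the one I expect to be the main obstacle---is to check that, under these identifications, the generating morphism $\beta_{H^{n-k}_\frakm(R/I)}$ becomes the natural map
\[
\Ext^k_R(R/I,R)\ \to\ \Ext^k_R(R/I^{[p]},R)
\]
induced by the surjection $R/I^{[p]}\twoheadrightarrow R/I$. This is precisely the compatibility of graded local duality with the Frobenius: the structural morphism $F(H^{n-k}_\frakm(R/I))\to H^{n-k}_\frakm(R/I)$ is identified, via $F(H^{n-k}_\frakm(R/I))\cong H^{n-k}_\frakm(R/I^{[p]})$, with $H^{n-k}_\frakm(-)$ applied to $R/I^{[p]}\twoheadrightarrow R/I$, and applying $\grD$ followed by $\grtau$ turns it into the displayed $\Ext$ map. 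In the ungraded setting this is the argument underlying the analogous statement in \cite{Lyubeznik:Crelle}; the only additional point here is that every isomorphism involved---$\grtau$ by Lemma~\ref{lemma:tau}, graded local duality, and the identification $F(R/I)=R/I^{[p]}$---is degree-preserving, so the whole identification takes place in the graded category.

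Finally I would appeal to the standard presentation of $H^k_I(R)$ as an $\calF$-finite module. Since the Frobenius powers $I^{[p^e]}$ are cofinal among the ordinary powers of $I$, one has $H^k_I(R)=\varinjlim_e\Ext^k_R(R/I^{[p^e]},R)$, and a generating morphism for its $\calF$-module structure is the map $\Ext^k_R(R/I,R)\to F(\Ext^k_R(R/I,R))$ which, under the isomorphism $F(\Ext^k_R(R/I,R))\cong\Ext^k_R(R/I^{[p]},R)$, is induced by $R/I^{[p]}\twoheadrightarrow R/I$ (see \cite{Lyubeznik:Crelle}); since $I$ is homogeneous, this is a presentation in the graded category. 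By the previous paragraph this generating morphism coincides with the one produced by $\calH(H^{n-k}_\frakm(R/I))$, so the two graded $\calF$-finite modules are the same, and the resulting isomorphism is degree-preserving by Proposition~\ref{proposition:subcategory}. In short, the entire content of the proof is the Frobenius/local-duality compatibility of the middle paragraph, obtained by running Lyubeznik's ungraded argument while tracking degrees through Lemma~\ref{lemma:tau}.
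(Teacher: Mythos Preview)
Your proposal is correct and follows essentially the same route as the paper: the paper's proof simply says to mirror Lyubeznik's ungraded argument from \cite[Example~4.8]{Lyubeznik:Crelle}, replacing local duality by graded local duality $\grD(H^{n-k}_\frakm(M))\cong\Ext^k_R(M,R(-n))$, and you have written out precisely what that entails. The only cosmetic discrepancy is that the paper records the duality with the shift $R(-n)$ rather than $R$, but this is immaterial since the same shift appears on both sides of the generating morphism.
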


The proof mirrors that of \cite[Example~4.8]{Lyubeznik:Crelle}; one replaces local duality by graded local duality, which says that if $M$ is a finitely generated graded $R$-module, then there is a natural functorial degree-preserving isomorphism
\[
\grD(H^{n-k}_\frakm(M))\ \cong\ \Ext^k_R(M,R(-n))\,,
\]
see~\cite[Proposition~2.1.6]{GW}; note that $R(-n)$ is the graded canonical module of $R$.

We now prove our main theorem on graded $\calF$-modules:

\begin{theorem}
\label{theorem:equivalence}
Let $M$ be a graded cofinite $R\{f\}$-module. Then the following are equivalent:
\begin{enumerate}[\quad\rm(1)]
\item Among the composition factors of the Eulerian $\calD$-module $\xi(\calH(M))$, there is at least one composition factor with support $\{\frakm\}$.

\item Among the composition factors of the graded $\calF$-finite module $\calH(M)$, there is at least one composition factor with support $\{\frakm\}$.

\item There exists an $\calF$-submodule $\calM$ of \ $\calH(M)$ such that every composition factor of~$\calM$ has support bigger than $\{\frakm\}$, and \ $\calH(M)/\calM$ has support $\{\frakm\}$.

\item The action of the Frobenius $f$ on ${[M]}_0$, the degree zero part of $M$, is not nilpotent.
\end{enumerate}
\end{theorem}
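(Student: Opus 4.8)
The plan is to prove all four conditions equivalent by treating $(2)$ as the hub: I would establish $(2)\Leftrightarrow(4)$, then $(2)\Leftrightarrow(3)$, and finally $(2)\Rightarrow(1)$ together with $(1)\Rightarrow(2)$. Two preliminary reductions streamline everything. First, $\calH(M)$ — hence conditions $(1)$--$(3)$ — depends only on $M^\st_\red$, and condition $(4)$ is likewise unchanged on replacing $M$ by $M^\st_\red$: for the reduced part, a uniform power of $f$ kills the finite-dimensional space $[M_\nil]_0$; for the stable part, the $f$-bijective subspace $V_\infty:=\bigcap_e f^e([M]_0)$ already lies in $[M^\st]_0$, and $[M^\st]_0\subseteq[M]_0$. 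Second, all four conditions are insensitive to the faithfully flat base change $\FF\to\bar\FF$ (support, containment of a given composition factor, and nilpotence of $f$ on a finite-dimensional graded piece all descend and ascend), so I may assume $\FF$ is perfect and $M=M^\st_\red$; then $\beta_M$ is the minimal root morphism of $\calH(M)$, $\grD(M)$ is finitely generated, and every finite-dimensional $\FF\{f\}$-module splits as $V_\infty\oplus V_\nil$ with $V_\infty=\bigcap_e f^e(V)$ ($f$-bijective) and $V_\nil=\bigcup_e\ker f^e$ ($f$-nilpotent).

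For $(2)\Leftrightarrow(4)$: by exactness of $\calH(-)$ and Lemma~\ref{lemma:roots}, a composition series of $M$ in the category of graded cofinite $R\{f\}$-modules is carried to one of $\calH(M)$ with $\calF$-composition factors $\calH(T_i)$, where (since $\calH(M)\ne0$, else all four conditions fail trivially) the $T_i$ are simple, reduced, and stable. Now $\calH(T_i)$ has support $\{\frakm\}$ iff its minimal root $\grD(T_i)$, equivalently $T_i$ itself, has finite length over $R$; and for a simple reduced stable $T_i$ this happens iff $T_i$ is concentrated in degree $0$ — if $f$ were non-nilpotent on $[T_i]_0$, then $T_i=R\{f\}\cdot\bigl(\bigcap_e f^e([T_i]_0)\bigr)$ would be generated in degree $0$, hence concentrated in non-negative degrees, hence (being cofinite, so bounded above) of finite length, hence, being reduced and graded, concentrated in degree $0$; conversely a finite-length reduced graded $R\{f\}$-module is concentrated in degree $0$, since otherwise $f$ embeds some $[T_i]_d$ with $d\ne0$ into degrees $p^ed\to\pm\infty$. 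As $[-]_0$ is exact, $f$ is non-nilpotent on $[M]_0$ precisely when it is non-nilpotent on some $[T_i]_0$, precisely when (for these simple $T_i$) some $T_i$ is concentrated in degree $0$, precisely when $\calH(M)$ has a composition factor of support $\{\frakm\}$.

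The implication $(3)\Rightarrow(2)$ is immediate, since the nonzero module $\calH(M)/\calM$ with support $\{\frakm\}$ has a composition factor of support $\{\frakm\}$, which is then a composition factor of $\calH(M)$. For $(2)\Rightarrow(3)$, assume $(2)$, equivalently $(4)$, so $V_\infty:=\bigcap_e f^e([M]_0)\ne0$; set $M'=R\{f\}\cdot V_\infty\subseteq M$. As in the previous step $M'$ has finite length over $R$ and is not nilpotent (since $f^e(V_\infty)=V_\infty\ne0$), so $\calH(M')$ has support $\{\frakm\}$; also $[M']_0=V_\infty$ (using $\FF$ perfect), so $f$ acts nilpotently on $[M/M']_0\cong V_\nil$. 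Applying exactness of $\calH(-)$ to $0\to M'\to M\to M/M'\to 0$, the module $\calM:=\calH(M/M')$ is an $\calF$-submodule of $\calH(M)$ with quotient $\calH(M')$ of support $\{\frakm\}$, while $\calM$ has no composition factor of support $\{\frakm\}$ by the $(2)\Leftrightarrow(4)$ analysis applied to $M/M'$. Hence $(3)$ holds.

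Finally $(1)\Leftrightarrow(2)$. For $(2)\Rightarrow(1)$: if $\mathcal S$ is an $\calF$-composition factor of $\calH(M)$ of support $\{\frakm\}$, then applying the exact functor $\xi$ to an $\calF$-composition series exhibits $\xi(\mathcal S)$ as a subquotient of $\xi(\calH(M))$, and $\xi(\mathcal S)$ is a nonzero $\calD$-module supported at $\{\frakm\}$, so it — hence $\xi(\calH(M))$ — has a $\calD$-composition factor of support $\{\frakm\}$. The converse $(1)\Rightarrow(2)$ is the crux. Applying $\xi$ to an $\calF$-composition series of $\calH(M)$ and using exactness of $\xi$ and finite length of $\xi(\calH(M))$ as a $\calD$-module, every $\calD$-composition factor of $\xi(\calH(M))$ is a $\calD$-composition factor of some $\xi(\mathcal S)$ with $\mathcal S$ a simple $\calF$-composition factor of $\calH(M)$; so $(1)\Rightarrow(2)$ reduces to showing that for a graded \emph{simple} $\calF$-module $\mathcal S$ with $\Supp\mathcal S\supsetneq\{\frakm\}$, the $\calD$-module $\xi(\mathcal S)$ has no composition factor of support $\{\frakm\}$. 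One knows $\Gamma_\frakm(\mathcal S)=0$ (it is an $\calF$-submodule of the simple, non-$\frakm$-torsion module $\mathcal S$), so $\xi(\mathcal S)$ has no $\frakm$-supported $\calD$-submodule; the difficulty is to promote this to a statement about composition factors. I would deduce it from the structure theory of Eulerian $\calD$-modules (as in \cite{Ma-Zhang}), showing $\xi(\mathcal S)$ is equidimensional with $\Supp\xi(\mathcal S)=\Supp\mathcal S$; failing that, by an inductive localization argument — choosing a form $g\in\frakm$ not vanishing on $\Supp\mathcal S$, passing to $R_g$, in which $\frakm$ is no longer the class of a point, and bootstrapping along dimension via the compatibility of the $\calF$- and $\calD$-structures. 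This step — that $\calF$-simplicity kills the $\frakm$-part of the $\calD$-composition series unless $\mathcal S$ is itself $\frakm$-supported — is where I expect the main obstacle.
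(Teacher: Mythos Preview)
Your argument for $(2)\Leftrightarrow(4)$ and $(2)\Rightarrow(3)$ is sound, though for $(4)\Rightarrow(3)$ the paper makes a simpler choice: rather than $M'=R\{f\}\cdot V_\infty$ (which forces the detour through perfect $\FF$ to get $[M']_0=V_\infty$), it sets $M'=[M]_{\ge 0}$, the $R$-submodule generated by elements of nonnegative degree. This is automatically an $R\{f\}$-submodule, has finite length since $M$ is cofinite, and satisfies $[M/M']_0=0$ on the nose; the rest of the argument is the same.

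The genuine gap is exactly where you flag it: $(1)\Rightarrow(2)$. Your proposed routes---equidimensionality of Eulerian $\calD$-modules via \cite{Ma-Zhang}, or an inductive localization---are not how the paper proceeds, and neither gives a clear path to the needed statement. What you are missing is \cite[Theorem~5.6]{Lyubeznik:Crelle}: if $\calM$ is a simple $\calF$-finite module, then $\xi(\calM)$ decomposes as a \emph{direct sum} of finitely many simple $\calD$-modules. Coupled with \cite[Theorem~2.12]{Lyubeznik:Crelle} (a simple $\calF$-module has a unique associated prime), each simple $\calD$-summand of $\xi(\calM)$ has the same unique associated prime as $\calM$. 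Hence for any $\calF$-finite $\calH(M)$, the $\calD$-composition factors of $\xi(\calH(M))$ are precisely the summands appearing in $\xi$ of the $\calF$-composition factors, and a $\calD$-composition factor with support $\{\frakm\}$ forces an $\calF$-composition factor with support $\{\frakm\}$. This gives $(1)\Leftrightarrow(2)$ in one stroke, without any localization induction or appeal to Eulerian structure theory beyond what is already established.
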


\begin{proof}
Without loss of generality, we assume that $M=M^\st_\red$ is reduced; each of the statements is unaffected by replacing $M$ with $M^\st_\red$.

By \cite[Theorem~5.6]{Lyubeznik:Crelle}, if $\calM$ is an $\calF$-finite module that is simple in the category of $\calF$-modules, then, in the category of $\calD$-modules, $\xi(\calM)$ is the direct sum of finitely many simple $\calD$-modules, say $\xi(\calM)\cong\oplus_i\calN_i$, where each $\calN_i$ is a simple $\calD$-module.

If $\calM$ is any $\calF$-finite module, then the composition factors of $\xi(\calM)$ in the category of $\calD$-modules are the modules $\calN_i$ appearing in the direct sum decomposition of the modules~$\xi(\calM')$, where $\calM'$ runs through the composition factors of $\calM$ in the category of $\calF$-modules. By~\cite[Theorem~2.12]{Lyubeznik:Crelle}, each simple $\calF$-module $\calM'$ has a unique associated prime, which must then be the unique associated prime of each $\calN_i$ appearing in the direct sum decomposition of $\xi(\calM')$ in the category of $\calD$-modules. Thus, $\xi(\calM)$ has a composition factor with support $\{\frakm\}$ if and only if $\calM$ has a composition factor with support $\{\frakm\}$. This proves the equivalence of (1) and (2).

Note that $\grD(M)\to\calH(M)$ is injective since $M=M^\st_\red$; we think of $\grD(M)$ as a submodule of $\calH(M)$ via this map. The map $\grD(M)\to F(\grD(M))$ is the minimal root morphism of $\calH(M)$. Let
\[
0=\calM_0\subset\calM_1\subset\dots\subset\calM_{\ell}=\calH(M)
\]
be a maximal filtration of $\calH(M)$ in the category of $\calF$-finite modules. Set $N_i$ to be the module $\calM_i\cap\grD(M)$. Then $N_i$ is a root of $\calM_i$. The surjection $M\to\grD(N_i)$ is an $R\{f\}$-module map. We denote the kernel of this surjection by $M_i$; this is an $R\{f\}$-submodule of~$M$; thus, there exists a chain of graded $R\{f\}$-submodules
\[
M=M_0\supset M_1\supset\cdots\supset M_{\ell}=0\,,
\]
such that the natural map
\[
\beta_{M_i/M_{i+1}}\colon\grD(M_i/M_{i+1})\to F(\grD(M_i/M_{i+1}))
\]
is a generating morphism of $\calM_{i+1}/\calM_i$. 

Suppose $\calM_{i+1}/\calM_i$ has support $\{\frakm\}$. Since $\grD((M_i/M_{i+1})^\st)$ is a root of $\calM_{i+1}/\calM_i$, hence isomorphic to a submodule of $\calM_{i+1}/\calM_i$, it has support $\{\frakm\}$. As~$\grD((M_i/M_{i+1})^\st)$ is finitely generated, it has finite length as an $R$-module. It follows that $(M_i/M_{i+1})^\st_\red$ is concentrated in degree zero: indeed, if $m$ is a nonzero element of degree $d\neq0$, then, for each $e$, the element $f^e(m)$ is nonzero of degree $dp^e$, contradicting the finite length.

Since $\calM_{i+1}/\calM_i$ is nonzero, the action of the Frobenius $f$ on $(M_i/M_{i+1})^\st_\red$ cannot be nilpotent. But~$(M_i/M_{i+1})^\st_\red$ is a subquotient of ${[M]}_0$, hence the action of $f$ on ${[M]}_0$ is not nilpotent. This proves that (2) implies (4).

Assuming (4) holds, set $M'={[M]}_{\ge0}$, which is the $R$-submodule of $M$ generated by the homogeneous elements of nonnegative degree. This is then an $R\{f\}$-submodule of $M$, and one has an exact sequence in the category of graded $R\{f\}$-modules,
\[
\CD
0@>>>M'@>>>M@>>>M/M'@>>>0\,.
\endCD
\]
This yields the exact sequence in the category of graded $\calF$-modules,
\[
\CD
0@>>>\calH(M/M')@>>>\calH(M)@>>>\calH(M')@>>>0\,.
\endCD
\]
Since $M'$ has finite length and a non-nilpotent Frobenius action, the module $\calH(M')$ is nonzero with support $\{\frakm\}$. Since ${[M/M']}_0=0$, it follows from the fact that (2) implies~(4) that all the composition factors of $\calH(M/M')$ have support bigger than $\{\frakm\}$. This proves that (4) implies (3), which, in turn, trivially implies (2).
\end{proof}

By Hochster~\cite[Theorem~3.1]{Hochster:Fmodules}, the category of $\calF$-modules has enough injectives. However, since $H^n_\frakm(R)$ is typically not an injective object in the category of $\calF$-finite modules, see \cite[Example~4.8]{Ma}, the following corollary is very unexpected:

\begin{corollary}
\label{corollary:injective}
Let $R$ be a standard graded polynomial ring of dimension $n$ over a separably closed field. Then the $R$-module $H^n_\frakm(R)$, with its natural $\calF$-module structure, is an injective object in the category of graded $\calF$-finite modules. 
\end{corollary}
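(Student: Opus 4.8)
The plan is to prove injectivity of $H^n_\frakm(R)$ via the Ext-criterion in the category of graded $\calF$-finite modules, which is abelian and in which every object has finite length (every $\calF$-finite module has finite length, and by Proposition~\ref{proposition:subcategory} a composition series of a graded one consists of graded modules). Thus $H^n_\frakm(R)$ is injective if and only if $\Ext^1(\mathcal{X},H^n_\frakm(R))=0$ for all $\mathcal{X}$, and by dévissage along composition series it suffices to treat simple $\mathcal{X}$. So I fix a simple graded $\calF$-finite module $\mathcal{T}$ and a short exact sequence $0\to H^n_\frakm(R)\to\mathcal{E}\to\mathcal{T}\to 0$ of graded $\calF$-finite modules, and must split it. Two facts are used throughout: first, by Proposition~\ref{proposition:fundamental:example} applied with the ideal $\frakm$, there is an isomorphism $H^n_\frakm(R)\cong\calH(R/\frakm)$, where $R/\frakm$ carries its natural Frobenius and is a simple, reduced, stable graded cofinite $R\{f\}$-module concentrated in degree $0$ (in particular $H^n_\frakm(R)$ is itself a simple $\calF$-finite module); second, a simple graded $\calF$-finite module has a unique associated prime, hence irreducible support. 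So I distinguish the cases $\Supp\mathcal{T}\supsetneq\{\frakm\}$ and $\Supp\mathcal{T}=\{\frakm\}$.

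Suppose $\Supp\mathcal{T}\supsetneq\{\frakm\}$. Then $\mathcal{E}$ has $H^n_\frakm(R)$ among its composition factors, and this factor has support $\{\frakm\}$, so conditions (1)--(4) of Theorem~\ref{theorem:equivalence} hold; writing $\mathcal{E}=\calH(V)$ by Proposition~\ref{proposition:H:onto}, Theorem~\ref{theorem:equivalence}(3) produces an $\calF$-submodule $\calM\subseteq\mathcal{E}$ all of whose composition factors have support strictly larger than $\{\frakm\}$, with $\mathcal{E}/\calM$ supported at $\{\frakm\}$. Since the composition factors of $\mathcal{E}$ are exactly $H^n_\frakm(R)$ and $\mathcal{T}$, while $\mathcal{E}/\calM\neq 0$ and $\calM$ has no composition factor isomorphic to $H^n_\frakm(R)$, we get $0\neq\calM\neq\mathcal{E}$, so $\calM$ is simple, $\calM\cong\mathcal{T}$, and $\mathcal{E}/\calM\cong H^n_\frakm(R)$. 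Now $H^n_\frakm(R)\cap\calM$ is a submodule of the simple module $\calM$; it is not all of $\calM$, since otherwise $\mathcal{T}\cong\calM$ would embed in $H^n_\frakm(R)$, forcing $\mathcal{T}\cong H^n_\frakm(R)$ and contradicting the supports; so it is zero. Hence $H^n_\frakm(R)\oplus\calM\hookrightarrow\mathcal{E}$, an equality by a length count, and the sequence splits. This is the step that uses the graded hypothesis, and it is what fails in the non-graded setting of \cite[Example~4.8]{Ma}.

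Suppose now $\Supp\mathcal{T}=\{\frakm\}$, so both composition factors of $\mathcal{E}$ are supported at $\{\frakm\}$. As in the proof of Theorem~\ref{theorem:equivalence}, a graded $\calF$-finite module whose composition factors are all supported at $\{\frakm\}$ has the form $\calH(W)$ for $W$ a reduced, stable graded cofinite $R\{f\}$-module of finite length over $R$ concentrated in degree $0$; equivalently, $W$ is a finite-dimensional $\FF$-vector space placed in degree $0$ with a bijective Frobenius-semilinear operator $f$. Applying this to $\mathcal{E}$ and using Lemma~\ref{lemma:roots} with the contravariant exactness of $\calH$, the inclusion $H^n_\frakm(R)\hookrightarrow\mathcal{E}=\calH(W)$ is $\calH$ applied to a surjection $W\to\bar{W}$ of $R\{f\}$-modules, and $\mathcal{T}\cong\calH(L)$ with $L=\ker(W\to\bar{W})$; thus the whole sequence is $\calH$ applied to $0\to L\to W\to\bar{W}\to 0$, a sequence of finite-dimensional degree-$0$ Frobenius modules with $\dim_\FF W=2$. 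Since $\calH(\bar{W})\cong H^n_\frakm(R)\cong\calH(R/\frakm)$ and $\bar{W}$, $R/\frakm$ are reduced and stable, part~(4) of the structure theorem for $\calH(-)$ (the graded analogue of \cite[Theorem~4.2]{Lyubeznik:Crelle}) gives $\bar{W}\cong R/\frakm$, whence $\dim_\FF\bar{W}=\dim_\FF L=1$. As $\calH$ carries split sequences to split sequences, it suffices to split $0\to L\to W\to\bar{W}\to 0$. A one-dimensional such module is isomorphic to $R/\frakm$: after rescaling the basis vector, the Frobenius ``eigenvalue'' $c$ is normalized to $1$ using a root of the separable polynomial $z^{p-1}-c^{-1}$. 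And an extension of $R/\frakm$ by $R/\frakm$ in this category splits: choosing a basis vector of $L$ fixed by $f$ and a lift to $W$ of a fixed vector of $\bar{W}$, the obstruction to replacing the lift by an $f$-fixed vector is the solvability of an Artin--Schreier equation $\nu^p-\nu=\mu$. Both equations have roots in $\FF$ since $\FF$ is separably closed, so the sequence splits. In particular $\mathcal{T}\cong H^n_\frakm(R)$: over a separably closed field, $H^n_\frakm(R)$ is the unique simple graded $\calF$-finite module supported at $\{\frakm\}$.

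The routine parts are: the Ext-dévissage in a finite-length abelian category; the fact that $\calH$, restricted to finite-dimensional degree-$0$ $\FF$-vector spaces with bijective Frobenius action, is contravariant, exact, and sends $R\{f\}$-quotients of $W$ to $\calF$-submodules of $\calH(W)$, which is contained in Lemma~\ref{lemma:roots}; and the composition-factor and length bookkeeping in the two cases. I expect the main obstacle to be the bridge in the last case---pinning down, via Lemma~\ref{lemma:roots} and the structure theorem for $\calH(-)$, that the given $\calF$-extension is precisely $\calH$ of a two-dimensional Frobenius-module extension with one-dimensional kernel and cokernel---after which separable closedness finishes it. Both hypotheses are genuinely needed: over $\FF_p[X]$ the equation $\nu^p-\nu=1$ has no root, so $\Ext^1_{\calF}(H^n_\frakm(R),H^n_\frakm(R))\neq 0$ there, while over a non-graded base \cite[Example~4.8]{Ma} shows that already the first case can fail.
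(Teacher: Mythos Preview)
Your overall strategy matches the paper's: handle the composition factor with larger support via Theorem~\ref{theorem:equivalence}, then translate the purely $\frakm$-supported case through $\calH$ and Lemma~\ref{lemma:roots} to finite $\FF\{f\}$-modules and split there using separable closedness. The organizational choice of d\'evissage to simple $\mathcal{T}$ rather than working with an arbitrary $\calM$ is a harmless variation, and your Case~1 argument is correct.

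There is, however, a real gap in Case~2. You assert $\dim_\FF W=2$, and from this deduce $\dim_\FF L=\dim_\FF\bar W=1$; but the fact that $\calH(W)$ has $\calF$-length~$2$ does not by itself force $\dim_\FF W=2$. What is needed is that over a separably closed field every simple finite-dimensional $\FF\{f\}$-module with bijective Frobenius is one-dimensional, equivalently that any such module admits a basis of $f$-fixed vectors. This is exactly \cite[Theorem~4.2]{Hochster:Fmodules} (or \cite[page~233]{Dieudonne} over an algebraically closed field), and the paper invokes it explicitly in the Lemma following the Corollary. Your claim that $\bar W$ is reduced is likewise not automatic from Lemma~\ref{lemma:roots}: that lemma gives $\bar W$ stable, but a quotient of a reduced module need not be reduced without further input. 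Once the fixed-basis result is cited, your Artin--Schreier computation is precisely the paper's argument in the rank-one case; but at that point the paper's Lemma---that \emph{every} short exact sequence of finite-rank $\FF\{f\}$-modules over a separably closed field splits---costs no more and renders the reduction to simple $\mathcal{T}$ unnecessary.
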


\begin{proof}
Let $\calM$ be a graded $\calF$-finite module with $H^n_\frakm(R)$ as an $\calF$-submodule; it suffices to show that $H^n_\frakm(R)\subset\calM$ splits in the category of graded $\calF$-modules. The module $H^n_\frakm(R)$ is a composition factor of $\calM$ with support $\{\frakm\}$; we first reduce to the case where $\calM$ has support precisely $\{\frakm\}$ as follows.

By Theorem~\ref{theorem:equivalence}, there exists a surjection $\phi\colon\calM\to\calN$ of graded $\calF$-modules such that each composition factor of $\ker\phi$ has support bigger than $\{\frakm\}$, and $\calN$ has support~$\{\frakm\}$. Since $H^n_\frakm(R)$ is a simple $\calF$-module that is not in $\ker\phi$, it maps to an isomorphic copy that is an $\calF$-submodule of $\calN$. Assuming that there is a splitting $\calN=\phi(H^n_\frakm(R))\oplus\calN'$ in the category of graded $\calF$-modules, the composition
\[
\CD
\calM @>\phi>> \calN @= \phi(H^n_\frakm(R))\oplus\calN' @>\pi_1>> \phi(H^n_\frakm(R)) @>\phi^{-1}>> H^n_\frakm(R)\,,
\endCD
\]
where $\pi_1$ is the projection to the first component, provides a splitting of $H^n_\frakm(R)\subset\calM$ in the category of graded $\calF$-modules.

We may thus assume that $\calM$ is a graded $\calF$-finite module with support $\{\frakm\}$; we need to show that $H^n_\frakm(R)\subset\calM$ splits in the category of $\calF$-modules. Take $\grD(M)$ to be the minimal root of $\calM$; then $M$ is a graded $R\{f\}$-module by Proposition~\ref{proposition:H:onto}. Note that $M_\red=M$, and $M$ has finite length as an $R$-module. Since homogeneous elements of $M$ of nonzero degree are necessarily nilpotent, it follows that $M$ is concentrated in degree $0$. Thus, $M$ is annihilated by $\frakm$, and is a finite $\FF\{f\}$ module, where $\FF$ is viewed as the residue field $R/\frakm$.

Since $H^n_\frakm(R)$ is an $\calF$-submodule of $\calM$, there exists an $\FF\{f\}$-module homomorphic image $N$ of $M$ such that 
\[
\grD(N)=\grD(M)\cap H^n_\frakm(R)\,.
\]
By the following lemma, the surjection $M\to N$ splits in the category of $\FF\{f\}$-modules. Applying $\calH$, the inclusion $H^n_\frakm(R)\subset\calM$ splits in the category of graded $\calF$-modules.
\end{proof}

\begin{remark}
For $R$ as in Corollary~\ref{corollary:injective}, we do not know whether $H^n_\frakm(R)$ is injective in the category of graded $\calF$-modules.
\end{remark}

\begin{lemma}
Let $\FF$ be a separably closed field of positive characteristic. Then every exact sequence of $\FF\{f\}$-modules
\[
\CD
0@>>>L@>>>M@>>>N@>>>0\,,
\endCD
\]
where $L$, $M$, $N$, are $\FF$-vector spaces of finite rank, splits in the category of $\FF\{f\}$-modules.
\end{lemma}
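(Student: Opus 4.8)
The plan is to reduce to the case that the Frobenius $f$ acts bijectively on $M$ (this is the situation in which the lemma gets applied, where the relevant $M$ is reduced and so coincides with its stable part $\bigcap_e f^e(M)$, bijectivity then passing to the $f$-stable submodule $L$ and to the quotient $N$), and then to interpret the obstruction to splitting as a cohomology class that vanishes by a Lang/Artin--Schreier surjectivity statement over the separably closed field $\FF$.

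So suppose $f$ is bijective on $M$, $L$, and $N$; write $f_L$, $f_N$ for its actions on $L$ and $N$. Pick an $\FF$-linear section $s\colon N\to M$ of the surjection $M\to N$, which exists as these are $\FF$-vector spaces. The failure of $s$ to be $\FF\{f\}$-linear is measured by
\[
\delta\colon N\to M\,,\qquad \delta(n)=f(s(n))-s(f(n))\,;
\]
composing with $M\to N$ shows $\delta$ lands in $L$, and a short computation shows it is $p$-semilinear (i.e.\ $\delta(rn)=r^p\delta(n)$). Replacing $s$ by $s+\phi$ for an $\FF$-linear $\phi\colon N\to L$ changes $\delta$ into $\delta+(f_L\circ\phi-\phi\circ f_N)$, so an $\FF\{f\}$-linear splitting exists precisely when $\delta$ lies in the image of the additive operator
\[
T\colon\Hom_\FF(N,L)\to\Hom_\FF(N,L)\,,\qquad T(\phi)=\phi\circ f_N-f_L\circ\phi\,.
\]
Since $f_N$ is bijective, $A\colon\phi\mapsto\phi\circ f_N$ is an $\FF$-linear automorphism of the finite-dimensional space $\Hom_\FF(N,L)$, and a direct check gives $A^{-1}\circ T=\mathrm{id}-G$ with $G(\phi)=f_L\circ\phi\circ f_N^{-1}$ a $p$-semilinear endomorphism. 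So it suffices to prove that $\mathrm{id}-G$ is surjective.

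This is the heart of the matter, and the only place separable closedness is used: \emph{for a $p$-semilinear endomorphism $g$ of a finite-dimensional $\FF$-vector space $V$ over a separably closed field $\FF$, the additive map $\mathrm{id}-g\colon V\to V$ is surjective.} Choosing an $\FF$-basis, $v\mapsto v-g(v)$ is an endomorphism $\psi$ of the algebraic group $\mathbb{G}_a^{\dim V}$ over $\FF$; as $g$ is $p$-semilinear the polynomial map $g$ has vanishing differential, so $\psi$ has differential the identity and is therefore \'etale. An \'etale homomorphism of the connected algebraic group $\mathbb{G}_a^{\dim V}$ is surjective, with finite \'etale kernel $V^g=\ker(\mathrm{id}-g)$; since $\FF$ is separably closed, each fibre of $\psi$ --- a torsor under the \'etale group $V^g$ --- has an $\FF$-point, so $\psi$ is surjective on $\FF$-points. (When $\dim V=1$ this is exactly surjectivity of the Artin--Schreier map $c\mapsto c-c^p$.) Applying this to $g=G$ produces the required $\phi$, and the sequence splits. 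I expect this algebraic-group argument --- carried out honestly and self-containedly --- to be the main obstacle; the reduction to bijective $f$ and the cocycle computations are routine.
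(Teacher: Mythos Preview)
Your argument under the bijectivity hypothesis is correct and takes a genuinely different route from the paper. The paper does not set up a cocycle obstruction: it invokes a structure theorem (cited from Hochster, with Dieudonn\'e for the algebraically closed case) asserting that $L$ and $N$ admit bases of $f$-fixed vectors, say $e_1,\dots,e_\ell$ and $v_1,\dots,v_n$; it then lifts each $v_j$ to some $\tilde v_j\in M$, writes $f(\tilde v_j)-\tilde v_j=\sum_i c_ie_i$, and corrects $\tilde v_j$ to an $f$-fixed lift by solving the separable Artin--Schreier equations $t_i^p-t_i+c_i=0$ in $\FF$. Your approach trades this black-box structure result for the Lang/\'etale argument, packaging the Artin--Schreier step into the surjectivity of $\mathrm{id}-G$ on $\Hom_\FF(N,L)$. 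The two proofs are equivalent in strength, but yours avoids citing the fixed-basis theorem at the cost of invoking the \'etale-cover picture for $\mathbb{G}_a^{\dim V}$.

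Two comments. First, the ``reduction to bijective $f$'' you announce is not carried out, and in fact cannot be: the lemma as stated fails without some injectivity hypothesis. Take $M=\FF^2$ with $f(e_1)=0$, $f(e_2)=e_1$, and $L=\FF e_1$; then $f$ vanishes on both $L$ and $N=M/L$, yet any section $s$ has $f(s(\bar e_2))=e_1\neq 0=s(f(\bar e_2))$, so the sequence does not split. The paper's own proof carries the same implicit restriction, since an $f$-fixed basis already forces $f$ to be injective; as you correctly observe, only this case is needed in the application. Second, a small bookkeeping correction: your $T(\phi)=\phi\circ f_N-f_L\circ\phi$ lands in the space of $p$-semilinear maps $N\to L$, not in $\Hom_\FF(N,L)$, and likewise $A(\phi)=\phi\circ f_N$ is an $\FF$-linear bijection from $\Hom_\FF(N,L)$ onto that space of $p$-semilinear maps rather than an automorphism of $\Hom_\FF(N,L)$. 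With this adjustment the identity $A^{-1}\circ T=\mathrm{id}-G$ is correct on $\Hom_\FF(N,L)$, $G$ is indeed a $p$-semilinear endomorphism there, and your \'etale surjectivity argument goes through.
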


\begin{proof}
We identify $L$ with its image in $M$, and $N$ with $M/L$. Using \cite[Theorem~4.2]{Hochster:Fmodules}, choose a basis $e_1,\dots,e_\ell$ for $L$ such that $f(e_i)=e_i$ for each $i$; when $\FF$ is algebraically closed, this also follows from \cite[page~233]{Dieudonne}. Similarly, $N$ has a basis $v_1,\dots,v_n$ with $f(v_j)=v_j$ for each $j$. It suffices to prove that each~$v_j$ lifts to an element $w_j\in M$ with $f(w_j)=w_j$. 

Set $v=v_j$ and let $\tilde{v}$ in $M$ be a lift of $v$. Since $f(v)=v$, it follows that $f(\tilde{v})-\tilde{v}$ is an element of $L$. Thus, there exist elements $c_i\in\FF$ with 
\[
f(\tilde{v})-\tilde{v}\ =\ \sum_{i=1}^\ell c_ie_i\,.
\]
For each $i$, the separable equation
\[
T^p-T+c_i\ =\ 0
\]
has a root $t_i$ in $\FF$. Setting
\[
w\ =\ \tilde{v}+\sum_{i=1}^\ell t_ie_i\,,
\]
it is readily seen that $f(w)=w$.
\end{proof}

The following example of Ma shows that the corollary does not hold over arbitrary fields; more generally, Ma computes the relevant $\Ext$ groups in \cite[Theorem~4.5]{Ma}.

\begin{example}
We consider $\calF$-modules over the field $\FF=\FF_p$. Take $\calM$ to be $\FF\oplus\FF$ with structure morphism
\[
\theta_{\!\calM}\colon\calM\to F(\calM)\,,\qquad (a,b)\mapsto\big(a\otimes1,(a+b)\otimes1\big)\,.
\]
Then $\FF$, with structure morphism
\[
\theta_{\,\FF}\colon\FF\to F(\FF)\,,\qquad b\mapsto b\otimes 1\,,
\]
may be identified with the $\calF$-submodule $0\oplus\FF$ of $\calM$. We claim that the inclusion $\FF\subset\calM$ does not split in the category of $\calF$-modules. Indeed, a splitting is a map of $\FF$-vector spaces
\[
\phi\colon\calM\to\FF\,,\qquad (a,b)\mapsto a\alpha+b
\]
for some $\alpha$ in $\FF$, such that the following diagram commutes:
\[
\CD
\calM@>\phi>>\FF\\
@V\theta_{\!\calM} VV @VV\theta_{\,\FF} V\\
F(\calM)@>F(\phi)>>F(\FF)\,
\endCD
\]
However, $\theta_{\,\FF}\circ\phi(a,b)=(a\alpha+b)\otimes 1$, whereas
\begin{multline*}
F(\phi)\circ\theta_{\!\calM}(a,b)\ =\ F(\phi)\big(a\otimes1,(a+b)\otimes1\big)\ =\ F(\phi)\big(a\otimes(1,1)+b\otimes(0,1)\big) \\
=\ a\otimes(\alpha+1)+b\otimes1\ =\ (a\alpha^p+a+b)\otimes1\,.
\end{multline*}
Thus, the commutativity forces $\alpha^p+1=\alpha$, which is not possible for $\alpha\in\FF_p$.
\end{example}

\begin{example}
\label{example:Ma}
Let $R=\FF[x_1,\dots,x_n]$, where $n\ge1$ and $\FF$ is an algebraically closed field of characteristic $p>0$. By \cite[Example~4.8]{Ma}, there exists an exact sequence
\[
\CD
0@>>>H^n_\frakm(R)@>>>\calM@>>>R@>>>0
\endCD
\]
that is not split in the category of $\calF$-finite modules. Since $H^n_\frakm(R)$ is an injective object in the category of graded $\calF$-finite modules by Corollary~\ref{corollary:injective}, it follows that $\calM$ is not a graded $\calF$-module; thus, the category of graded $\calF$-modules is not closed---as a subcategory of the category of $\calF$-modules---under extensions.
\end{example}

We record another consequence of Theorem~\ref{theorem:equivalence}:

\begin{corollary}
If $\calM'$ and $\calM''$ are graded $\calF$-finite modules such that $\calM'$ has support~$\{\frakm\}$ and $\calM''$ has no composition factor with support $\{\frakm\}$, then every extension
\[
\CD
0@>>>\calM'@>>>\calM@>>>\calM''@>>>0
\endCD
\]
in the category of graded $\calF$-modules is split.
\end{corollary}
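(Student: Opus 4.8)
The plan is to produce an $\calF$-submodule $\calN$ of $\calM$ that maps isomorphically onto $\calM''$ under the given surjection $\calM\to\calM''$; such an $\calN$ is a splitting of the extension, and since $\calN$ is automatically a graded $\calF$-submodule of $\calM$ by Proposition~\ref{proposition:subcategory}, the splitting lives in the category of graded $\calF$-modules. The submodule $\calN$ is handed to us by Theorem~\ref{theorem:equivalence}. Indeed, $\calM$ is an extension of the graded $\calF$-finite modules $\calM'$ and $\calM''$, hence is itself graded $\calF$-finite, so by Proposition~\ref{proposition:H:onto} we may write $\calM\cong\calH(M)$ for some graded cofinite $R\{f\}$-module $M$. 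We may assume $\calM'\neq0$, since otherwise there is nothing to prove; then $\calM$ has a composition factor with support $\{\frakm\}$ -- namely any composition factor of $\calM'$ -- so condition~(2) of Theorem~\ref{theorem:equivalence} holds, and condition~(3) supplies an $\calF$-submodule $\calN\subseteq\calM$ each of whose composition factors has support bigger than $\{\frakm\}$, with $\calM/\calN$ of support $\{\frakm\}$.

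Next I would verify the two statements $\calN\cap\calM'=0$ and $\calN+\calM'=\calM$, which is pure bookkeeping with composition factors. For the first: $\calN\cap\calM'$ is a submodule of $\calN$, so none of its composition factors has support $\{\frakm\}$; but it is also a submodule of $\calM'$, and every nonzero subquotient of the $\frakm$-torsion module $\calM'$ has support exactly $\{\frakm\}$, so $\calN\cap\calM'=0$. For the second: $\calM/(\calN+\calM')$ is a quotient of $\calM/\calN$, hence $\frakm$-torsion, so all of its composition factors have support $\{\frakm\}$; but it is also a quotient of $\calM/\calM'\cong\calM''$, which by hypothesis has no composition factor of support $\{\frakm\}$, so $\calM/(\calN+\calM')=0$. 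Together these give $\calM=\calM'\oplus\calN$, and consequently $\calN\hookrightarrow\calM\to\calM''$ is an isomorphism, so the extension is split.

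The one point that needs a word of justification -- though it is not deep -- is the assertion that $\calM$ is graded $\calF$-finite, i.e., that the category of graded $\calF$-finite modules is closed under extensions inside that of graded $\calF$-modules; this is what allows Proposition~\ref{proposition:H:onto} and Theorem~\ref{theorem:equivalence} to be applied to $\calM$. After that, the argument rests only on the elementary facts that $\frakm$-torsion passes to subquotients and that a nonzero simple graded $\calF$-module cannot have support both equal to $\{\frakm\}$ and strictly bigger than $\{\frakm\}$. Finally, no extra work is needed to stay in the graded category: by Proposition~\ref{proposition:subcategory} every $\calF$-module morphism between graded $\calF$-modules is degree-preserving, so the inclusion $\calN\hookrightarrow\calM$ and the induced inverse $\calM''\to\calN$ are morphisms of graded $\calF$-modules automatically.
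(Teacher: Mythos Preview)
Your proof is correct and follows essentially the same approach as the paper: both invoke Theorem~\ref{theorem:equivalence} to produce the $\calF$-submodule $\calN\subseteq\calM$ whose composition factors all have support strictly larger than $\{\frakm\}$ and whose cokernel is $\frakm$-torsion, and then argue by bookkeeping with composition factors. The only cosmetic difference is that the paper phrases the conclusion dually, showing that the induced map $\calM'\to\calM/\calN$ is an isomorphism (hence a retraction onto $\calM'$), whereas you show directly that $\calN$ is a complement of $\calM'$; you are also more careful than the paper in flagging that $\calM$ must first be seen to be graded $\calF$-finite before Theorem~\ref{theorem:equivalence} applies.
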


\begin{proof}
By Theorem~\ref{theorem:equivalence}, there exists an $\calF$-module surjection $\calM\to\calM_1$ where $\calM_1$ is an $\calF$-module with support $\{\frakm\}$, and the kernel of this surjection has no composition factor with support $\{\frakm\}$. Restricting to $\calM'$, the surjection induces an isomorphism $\calM'\to\calM_1$. Thus, we have an $\calF$-module splitting $\calM\to\calM'$.
\end{proof}

Applying Theorem~\ref{theorem:equivalence} to Proposition~\ref{proposition:fundamental:example}, we obtain the following theorem:

\begin{theorem}
\label{theorem:quotient:graded}
Let $R$ be a standard graded polynomial ring, where ${[R]}_0$ is a field of prime characteristic. Let $\frakm$ be the homogeneous maximal ideal of $R$, and $I$ an arbitrary homogeneous ideal. For each nonnegative integer $k$, the following are equivalent:

\begin{enumerate}[\quad\rm(1)]
\item Among the composition factors of the Eulerian $\calD$-module $\xi(H^k_I(R))$, there is at least one composition factor with support $\{\frakm\}$.

\item Among the composition factors of the graded $\calF$-finite module $H^k_I(R)$, there is at least one composition factor with support $\{\frakm\}$.

\item $H^k_I(R)$ has a graded $\calF$-module homomorphic image with support $\{\frakm\}$.

\item The natural Frobenius action on ${[H^{\dim R-k}_\frakm(R/I)]}_0$ is not nilpotent.
\end{enumerate}
\end{theorem}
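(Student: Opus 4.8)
The plan is to reduce Theorem~\ref{theorem:quotient:graded} to Theorem~\ref{theorem:equivalence} by means of Proposition~\ref{proposition:fundamental:example}. Writing $n=\dim R$, I would take $M=H^{n-k}_\frakm(R/I)$ and first check that $M$ is a graded cofinite $R\{f\}$-module: since $R/I$ is a graded quotient of the standard graded ring $R$, with homogeneous maximal ideal $\frakm(R/I)$, the module $M$ is graded and Artinian, hence cofinite; the Frobenius endomorphism of $R/I$ induces a Frobenius action $f$ on $M$ with $f(rm)=r^pf(m)$, and computing $M$ via the \v{C}ech complex on $x_1,\dots,x_n$ shows that $f$ carries $[M]_d$ into $[M]_{pd}$ for each $d$, since Frobenius raises a homogeneous \v{C}ech fraction of degree $d$ to its $p$-th power, of degree $pd$. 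In particular $[M]_0$, with its Frobenius action, is exactly the module appearing in condition~(4).

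Next I would invoke Proposition~\ref{proposition:fundamental:example}, which gives a degree-preserving isomorphism $\calH(M)\cong H^k_I(R)$ of graded $\calF$-finite modules, and hence an isomorphism $\xi(\calH(M))\cong\xi(H^k_I(R))$ of $\calD$-modules. Under this identification, conditions~(1), (2), and~(4) of Theorem~\ref{theorem:quotient:graded} coincide verbatim with conditions~(1), (2), and~(4) of Theorem~\ref{theorem:equivalence} for the module $M$; I would note that the composition factors of a graded $\calF$-finite module are the same whether computed in the graded category or in the full category of $\calF$-modules, by Proposition~\ref{proposition:subcategory}, so that condition~(2) is unambiguous. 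Theorem~\ref{theorem:equivalence} then yields at once that conditions~(1), (2), and~(4) are equivalent.

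It remains to splice in condition~(3). If condition~(3) of Theorem~\ref{theorem:equivalence} holds, then $\calH(M)=H^k_I(R)$ has an $\calF$-submodule $\calM$ with $\calH(M)/\calM$ supported at $\{\frakm\}$; by Proposition~\ref{proposition:subcategory} the quotient is a graded $\calF$-module, so it is a graded $\calF$-module homomorphic image of $H^k_I(R)$ with support $\{\frakm\}$, which is exactly condition~(3) here. Conversely, if $H^k_I(R)$ has a graded $\calF$-module homomorphic image $\calN$ with support $\{\frakm\}$, then $\calN$ is nonzero and $\calF$-finite, hence of finite length, so it has a composition factor; that factor is a subquotient of $\calN$, hence supported at $\{\frakm\}$, and it is also a composition factor of $H^k_I(R)$, which is condition~(2). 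Since conditions~(1), (2), (4) and condition~(3) of Theorem~\ref{theorem:equivalence} are mutually equivalent, condition~(3) of Theorem~\ref{theorem:quotient:graded} joins the list.

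Given Theorem~\ref{theorem:equivalence} and Proposition~\ref{proposition:fundamental:example}, this is essentially a bookkeeping argument. The only points where I expect to have to argue rather than quote are verifying that the Frobenius action on $H^{n-k}_\frakm(R/I)$ respects the grading up to the factor $p$ --- so that $M$ genuinely lies in the source category of the functor $\calH(-)$ --- and reconciling the weaker phrasing of condition~(3) here, an arbitrary graded $\calF$-module homomorphic image with support $\{\frakm\}$, with the more precise form of condition~(3) in Theorem~\ref{theorem:equivalence}. The latter is the only real obstacle, and it is mild.
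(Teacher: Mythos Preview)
Your proposal is correct and follows exactly the paper's approach: the paper presents Theorem~\ref{theorem:quotient:graded} simply as the specialization of Theorem~\ref{theorem:equivalence} via Proposition~\ref{proposition:fundamental:example}, with no further proof given. You have correctly fleshed out the details the paper omits, including the verification that $H^{n-k}_\frakm(R/I)$ is a graded cofinite $R\{f\}$-module and, more importantly, the reconciliation of condition~(3) here with the stronger condition~(3) in Theorem~\ref{theorem:equivalence}; the paper leaves this last point entirely implicit.
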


\begin{example}
Consider the polynomial ring $R=\FF_p[x_1,\dots,x_6]$, where $p$ is a prime integer. Let $\frakm$ denote the homogeneous maximal ideal of $R$, and set $I$ to be the ideal generated by
\[
x_1x_2x_3\,,\ \ x_1x_2x_4\,,\ \ x_1x_3x_5\,,\ \ x_1x_4x_6\,,\ \ x_1x_5x_6\,,\ \ x_2x_3x_6\,,\
x_2x_4x_5\,,\ \ x_2x_5x_6\,,\ \ x_3x_4x_5\,,\ \ x_3x_4x_6\,;
\]
this is the Stanley-Reisner ideal for a triangulation of the real projective plane $\RR\PP^2$ as in~\cite[Example~5.2]{SW:Crelle}. The ideal $I$ height $3$. We claim that $H^3_I(R)$ has a graded $\calF$-module homomorphic image with support $\{\frakm\}$ if and only if $p=2$.

For each $k\ge1$, one has
\[
{[H^{k+1}_\frakm(R/I)]}_0\ =\ \Hsing^k(\RR\PP^2\,;\,\ZZ/p\ZZ)\,,
\]
by Hochster's formula, see, for example, \cite[Section~5.3]{Bruns-Herzog}. Using this,
\[
{[H^3_\frakm(R/I)]}_0\ =\ 
\begin{cases}
\ZZ/2\ZZ&\text{ if }\ p=2\,,\\
0&\text{ if }\ p>2\,.
\end{cases}
\]
The ring $R/I$ is $F$-pure since $I$ is a square-free monomial ideal; when $p=2$, the Frobenius action on ${[H^3_\frakm(R/I)]}_0$ is thus injective. The claim now follows from Theorem~\ref{theorem:quotient:graded}.
\end{example}

\begin{corollary}
\label{corollary:p:torsion}
Let $R=\ZZ[x_1,\dots,x_n]$ be a polynomial ring with the $\NN$-grading ${[R]}_0=\ZZ$ and~$\deg x_i=1$ for each~$i$. Let $I$ be a homogeneous ideal, $p$ a prime integer, and $k$ a nonnegative integer. Suppose that the Frobenius action on
\[
{\big[H^{n-k}_{(x_1,\dots,x_n)}(R/(I+pR))\big]}_0
\]
is nilpotent, and that the multiplication by $p$ map
\[
\CD
{H^{k+1}_I(R)}_{x_i}@>{\cdot p}>>{H^{k+1}_I(R)}_{x_i}
\endCD
\]
is injective for each $i$. Then the multiplication by $p$ map on $H^{k+1}_I(R)$ is injective.
\end{corollary}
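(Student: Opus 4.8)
The plan is to localize the question at the prime $p$: I will realize the $p$-torsion of $H^{k+1}_I(R)$ as a quotient, in the category of $\calD$-modules over $\FF_p[X]$, of a local cohomology module controlled by Theorem~\ref{theorem:quotient:graded}. Write $\bar R=R/pR=\FF_p[x_1,\dots,x_n]$ and $\bar I=(I+pR)/pR$, let $\frakm=(x_1,\dots,x_n)$, and let $\bar\frakm$ be the image of $\frakm$ in $\bar R$. Set $N=(0:_{H^{k+1}_I(R)}p)$; the assertion is that $N=0$. Applying $H^\bullet_I(-)$ to $0\to R\xrightarrow{p}R\to\bar R\to0$ and using that $H^j_I(\bar R)=H^j_{\bar I}(\bar R)$ for all $j$, one obtains the exact sequence
\[
\cdots\to H^k_I(R)\xrightarrow{p}H^k_I(R)\xrightarrow{\rho}H^k_{\bar I}(\bar R)\xrightarrow{\delta}H^{k+1}_I(R)\xrightarrow{p}H^{k+1}_I(R)\to\cdots\,,
\]
so that $N=\image(\delta)$, a homomorphic image of $H^k_{\bar I}(\bar R)$. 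Since localization is exact, the hypothesis that multiplication by $p$ on $H^{k+1}_I(R)_{x_i}$ is injective gives $N_{x_i}=(0:_{H^{k+1}_I(R)_{x_i}}p)=0$ for each $i$; hence every prime in $\Supp_{\bar R}N$ contains $x_1,\dots,x_n$, so $\Supp_{\bar R}N\subseteq\{\bar\frakm\}$. In particular, if $N\ne0$ then $\Supp_{\bar R}N=\{\bar\frakm\}$.

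Next I would upgrade this to $\calD$-modules. The local cohomology modules $H^j_I(R)$ over $R=\ZZ[X]$ are $\calD_\ZZ(R)$-modules, and the exact sequence above is one of $\calD_\ZZ(R)$-modules: it is the long exact sequence associated to the short exact sequence of complexes of $\calD_\ZZ(R)$-modules obtained by multiplying the \v{C}ech complex on a generating set of $I$ by $p$. Thus $\delta$ is $\calD_\ZZ(R)$-linear and $N=\image(\delta)$ is a $\calD_\ZZ(R)$-submodule of $H^{k+1}_I(R)$. As $N$ and $H^k_{\bar I}(\bar R)$ are annihilated by $p$, and the reduction $\calD_\ZZ(R)\to\calD_{\FF_p}(\bar R)$ is surjective with kernel $p\calD_\ZZ(R)$, the surjection $\delta\colon H^k_{\bar I}(\bar R)\twoheadrightarrow N$ is $\calD_{\FF_p}(\bar R)$-linear, and the $\calD_{\FF_p}(\bar R)$-module structure it puts on $H^k_{\bar I}(\bar R)$ is the one underlying $\xi(H^k_{\bar I}(\bar R))$. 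Consequently, the composition factors of $N$, viewed as a $\calD_{\FF_p}(\bar R)$-module, form a subset of those of the Eulerian $\calD$-module $\xi(H^k_{\bar I}(\bar R))$.

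Now I conclude. Suppose $N\ne0$. Since $\xi(H^k_{\bar I}(\bar R))$ has finite length as a $\calD_{\FF_p}(\bar R)$-module, so does its quotient $N$; and since $\Supp_{\bar R}N=\{\bar\frakm\}$, at least one composition factor of $N$---hence one of $\xi(H^k_{\bar I}(\bar R))$---has support $\{\bar\frakm\}$. Applying Theorem~\ref{theorem:quotient:graded} with $\bar R$ and $\bar I$ in place of $R$ and $I$ (note $\dim\bar R=n$), this forces the Frobenius action on $[H^{n-k}_{\bar\frakm}(\bar R/\bar I)]_0=[H^{n-k}_{(x_1,\dots,x_n)}(R/(I+pR))]_0$ to be non-nilpotent, contrary to hypothesis. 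Therefore $N=0$, which is the asserted injectivity of multiplication by $p$ on $H^{k+1}_I(R)$.

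The step I expect to require the most care is the passage to $\calD$-modules: checking that the connecting map $\delta$ is $\calD_\ZZ(R)$-linear and, more to the point, that the $\calD_{\FF_p}(\bar R)$-module structure $N$ inherits as a subquotient over $\ZZ[X]$ agrees with the one to which Theorem~\ref{theorem:quotient:graded} speaks---i.e.\ reconciling the mixed-characteristic $\calD$-module picture over $\ZZ[X]$ with the characteristic-$p$ $\calF$-module picture over $\FF_p[X]$. The long exact sequence and the support computation from the localization hypothesis are routine.
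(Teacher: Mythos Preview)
Your proof is correct and follows essentially the same approach as the paper's: both realize the $p$-torsion of $H^{k+1}_I(R)$ as an Eulerian $\calD_{\FF_p}(\bar R)$-module quotient of $H^k_{\bar I}(\bar R)$ via the long exact sequence for multiplication by $p$, use the localization hypothesis to pin its support at $\{\bar\frakm\}$, and then invoke Theorem~\ref{theorem:quotient:graded} to derive a contradiction from the nilpotency assumption. Your write-up is somewhat more explicit about the support computation and the passage from $\calD_\ZZ(R)$ to $\calD_{\FF_p}(\bar R)$, and you are right to flag the compatibility of $\calD$-structures as the point needing care; the paper handles this tersely via $\calD_\ZZ(R)/p\calD_\ZZ(R)=\calD_{\FF_p}(\bar R)$.
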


\begin{proof}
The ring $\calD_\ZZ(R)$ of differential operators on $R$ is a free~$R$-module with basis
\[
\del_1^{[t_1]}\ \cdots\ \del_n^{[t_n]}\qquad\text{ for } \ (t_1,\dots,t_n)\in\NN^n\,,
\]
see~\cite[Th\'eor\`eme~16.11.2]{EGA4}. Multiplication by $p$ on $R$ induces
\[
\CD
@>>>H^k_I(R)@>>>H^k_I(R/pR)@>{\delta}>>H^{k+1}_I(R)@>{\cdot p}>>H^{k+1}_I(R)@>>>\,,
\endCD
\]
which is an exact sequence of $\calD_\ZZ(R)$-modules. Specifically, the kernel of multiplication by~$p$ on $H^{k+1}_I(R)$ is a $\calD_\ZZ(R)$-module; since it is annihilated by~$p$, it is also a module over
\[
\calD_\ZZ(R)/p\calD_\ZZ(R)\ =\ \calD_{\,\FF_p}(R/pR)\,.
\]
If this kernel is nonzero, then it is a homomorphic image of $H^k_I(R/pR)$ in the category of Eulerian $\calD_{\,\FF_p}(R/pR)$-modules, supported precisely at the homogeneous maximal ideal~$\frakm$ of $R/pR$. But this is not possible, since the $\calD_{\,\FF_p}(R/pR)$-module $H^k_I(R/pR)$ has no composition factor with support $\{\frakm\}$ by Theorem~\ref{theorem:quotient:graded}.
\end{proof}

\begin{example}
\label{example:ep1}
Let $E$ be an elliptic curve in $\PP^2_\QQ$, and consider the Segre embedding of the product $E\times\PP^1_\QQ$ in $\PP_\QQ^5$. Set $R=\ZZ[x_1,\dots,x_6]$, and let $I\subset R$ be an ideal such that $(R/I)\otimes_\ZZ\QQ$ is the homogeneous coordinate ring of the embedding. For all but finitely many primes~$p$, the reduction of $E$ modulo $p$ is an elliptic curve that we denote by $E_p$. By Serre~\cite{Serre} and Elkies~\cite{Elkies} respectively, there exist infinitely many prime integers $p$ such that $E_p$ is ordinary, and infinitely many such that $E_p$ is supersingular.

Take a prime $p$ for which $E_p$ is an elliptic curve; then $(R/I)\otimes_\ZZ\FF_p$ is a homogeneous coordinate ring for $E_p\times\PP^1_{\FF_p}$. Using the K\"unneth formula, one obtains
\[
H^2_\frakm(R/(I+pR))
=\ H^1(E_p,\calO_{E_p})\otimes H^0(\PP^1_{\FF_p},\calO_{\PP^1_{\FF_p}})\,.
\]
Hence, the Frobenius action on the rank one $\FF_p$-vector space $H^2_\frakm(R/(I+pR))$ may be identified with the map
\[
\CD
H^1(E_p,\calO_{E_p})\otimes H^0(\PP^1_{\FF_p},\calO_{\PP^1_{\FF_p}}) @>f>>
H^1(E_p,\calO_{E_p})\otimes H^0(\PP^1_{\FF_p},\calO_{\PP^1_{\FF_p}}),
\endCD
\]
which is zero when $E_p$ is supersingular, and nonzero when $E_p$ is ordinary. It follows that the module $H^2_\frakm(R/(I+pR))^\st$ is zero when $E_p$ is supersingular, and nonzero when it is ordinary. By \cite[page~75]{HS} or \cite[Theorem~3.1]{Lyubeznik:Compositio}, the same holds for $H^4_I(R/pR)$, implying that the multiplication by $p$ map
\[
\CD
H^4_I(R)@>{\cdot p}>>H^4_I(R)
\endCD
\]
is surjective for infinitely many prime integers $p$, and also not surjective for infinitely many~$p$; see also~\cite{SW:Contemp}. Corollary~\ref{corollary:p:torsion} implies that the map is \emph{injective} for each $p$ for which $E_p$ is an elliptic curve, since
\[
\left[H^3_\frakm(R/(I+pR))\right]_0\ =\ H^1(E_p,\calO_{E_p})\otimes H^1(\PP^1_{\FF_p},\calO_{\PP^1_{\FF_p}})\ =\ 0\,,
\]
and ${H^4_I(R)}_{x_i}=0$ for each $i$ because the arithmetic rank of $IR_{x_i}$ (defined in Section~\ref{sec:lc}) is $3$; compare with \cite[Example~3.3]{BBLSZ}.
\end{example}

\section{Preliminaries on local cohomology}
\label{sec:lc}

The following theorem enables the calculation of the Bass numbers of certain local cohomology modules in terms of singular cohomology:

\begin{theorem}
\label{theorem:comparison}
Consider the polynomial ring $R=\CC[x_1,\dots,x_n]$. Let $I$ be an ideal of $R$, and~$\frakm$ a maximal ideal. If $k_0$ is a positive integer such that $\Supp H^k_I(R)\subseteq\{\frakm\}$ for each integer $k\ge k_0$. Then, for each $k\ge k_0$, one has an isomorphism of $R$-modules
\[
H^k_I(R)\ \cong\ H^n_\frakm(R)^{\mu_k},
\]
where $\mu_k$ is the $\CC$-rank of the singular cohomology group $\Hsing^{n+k-1}(\CC^n\setminus\Var(I)\,;\,\CC)$.

If $I$ and $\frakm$ are homogeneous with respect to the standard grading on $R$, then the displayed isomorphism is degree-preserving.
\end{theorem}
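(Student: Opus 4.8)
The plan is to reduce the statement to the computation of a single Bass number and then to evaluate that Bass number using the algebraic de Rham functor. By Lyubeznik, each $H^k_I(R)$ is a holonomic $\calD=\calD_\CC(R)$-module; for $k\ge k_0$ it is supported only at $\frakm$, so Kashiwara's equivalence for $\calD$-modules supported at a closed point shows that, as an $R$-module, $H^k_I(R)\cong H^n_\frakm(R)^\mu$, where $\mu=\dim_\CC\Hom_R(R/\frakm,H^k_I(R))$ is the $0$th Bass number of $H^k_I(R)$ at $\frakm$; in the graded case this refines to an isomorphism of graded modules $H^k_I(R)\cong\bigoplus_jH^n_\frakm(R)(j)^{\mu_j}$. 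Thus it remains to identify $\mu$ with $\dim_\CC\Hsing^{n+k-1}(\CC^n\setminus\Var(I);\CC)$ and, in the graded setting, to show $\mu_j=0$ for $j\neq0$. For later use, recall that the de Rham complex of $H^n_\frakm(R)$ has cohomology $\CC$ concentrated in degree $n$ (it is the $\calD$-module direct image of $\CC$ along $\{\frakm\}\hookrightarrow\Spec R$, and de Rham cohomology commutes with this direct image up to a shift by the codimension $n$), and that $H^n_{\dR}(H^n_\frakm(R))$ is spanned by the class of $(dx_1\wedge\cdots\wedge dx_n)/(x_1\cdots x_n)$, which is homogeneous of internal degree $0$.

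Next, choose generators $f_1,\dots,f_r$ of $I$ (homogeneous ones in the graded case), and let $\Cdot$ be the \v{C}ech complex $0\to R\to\bigoplus_iR_{f_i}\to\cdots\to R_{f_1\cdots f_r}\to0$, so that $H^p(\Cdot)=H^p_I(R)$. Each term of $\Cdot$ is a localization of $R$, hence a $\calD$-module; form the first-quadrant double complex $\Cdot\otimes_R\Omega^\bullet_R$, with the \v{C}ech differential in one direction and the de Rham differential in the other, and let $T^\bullet$ be its total complex. Since each $\Omega^q_R$ is free over $R$, the spectral sequence of this double complex that takes \v{C}ech cohomology first has
\[
E_2^{p,q}\ =\ H^q_{\dR}(H^p_I(R))\ \Longrightarrow\ H^{p+q}(T^\bullet)\,.
\]
Running the spectral sequence the other way, the $p$th column with its de Rham differential is the algebraic de Rham complex of $\Spec R_{f_{i_0}\cdots f_{i_p}}$; by Grothendieck's comparison theorem, and because the affine opens $D(f_i)$ and their finite intersections are acyclic for every coherent sheaf $\Omega^q$, this computes singular cohomology: splitting off the $p=0$ column $\Omega^\bullet_{\CC^n}$ and using that $\Hsing^j(\CC^n;\CC)=0$ for $j\ge1$, one obtains $H^j(T^\bullet)\cong\Hsing^{j-1}(U;\CC)$ for all $j\ge2$, where $U:=\CC^n\setminus\Var(I)$.

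Now fix $k\ge k_0$ and look at total degree $n+k$ in the first spectral sequence above. If $p+q=n+k$ with $0\le q\le n$, then $p=n+k-q\ge k\ge k_0$, so $H^p_I(R)\cong H^n_\frakm(R)^{\mu_p}$ and hence $E_2^{p,q}=0$ unless $q=n$; the only surviving term on the $E_2$-page in total degree $n+k$ is $E_2^{k,n}=\CC^\mu$. Every differential out of $E_r^{k,n}$ lands in $H^{q'}_{\dR}(H^{n}_\frakm(R)^{\mu_{k+r}})$ with $q'<n$, which is zero, and every differential into it originates from $H^{q''}_{\dR}(-)$ with $q''>n$, which is zero as the de Rham complex has length $n$; hence $H^{n+k}(T^\bullet)=E_\infty^{k,n}=\CC^\mu$. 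Comparing with the preceding paragraph gives $\mu=\dim_\CC\Hsing^{n+k-1}(U;\CC)$, which is the ungraded statement. For the graded statement, all of the above respects the internal grading once $\deg dx_i=1$; the $\CC^*$-action makes $\Hsing^\bullet(U;\CC)$ concentrated in internal degree $0$ (since $\CC^*$ is connected), hence so are $H^{n+k}(T^\bullet)$ and $E_2^{k,n}=H^n_{\dR}(H^k_I(R))$. Because $H^n_{\dR}(H^n_\frakm(R)(j))=H^n_{\dR}(H^n_\frakm(R))(j)$ sits in internal degree $-j$, the decomposition $H^k_I(R)\cong\bigoplus_jH^n_\frakm(R)(j)^{\mu_j}$ forces $\mu_j=0$ for $j\neq0$, so the isomorphism $H^k_I(R)\cong H^n_\frakm(R)^\mu$ is degree-preserving.

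The part that I expect to demand the most care is not the collapse of the spectral sequence---which is essentially forced by the hypothesis $\Supp H^p_I(R)\subseteq\{\frakm\}$ for $p\ge k_0$---but rather the identification in the second paragraph: making precise, via Grothendieck's comparison theorem together with the acyclicity of the affine cover $\{D(f_i)\}$ of $U$, that the double complex $\Cdot\otimes_R\Omega^\bullet_R$ computes exactly $\Hsing^{\bullet-1}(U;\CC)$ in degrees $\ge2$; and, in the graded case, tracking the internal grading through the de Rham complex carefully enough to confirm that $H^n_{\dR}(H^n_\frakm(R))$ lies in internal degree $0$.
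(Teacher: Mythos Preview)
Your argument is correct and follows essentially the same strategy as the paper: Kashiwara's equivalence reduces to computing a Bass number, and a \v{C}ech--de~Rham double complex together with Grothendieck's comparison theorem identifies that Bass number with a singular cohomology rank. Your use of the \emph{full} \v{C}ech complex (including the $R$ term) rather than the paper's truncated complex $\Cdot(\bsg;R)$ is a mild improvement: it makes the identification $H^{j}(T^\bullet)\cong\Hsing^{j-1}(U;\CC)$ valid for all $j\ge 2$, so you avoid the paper's separate treatment of the case $k_0=1$.

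The one place where you genuinely diverge from the paper is the graded assertion. The paper simply invokes the result of Ma--Zhang that any $\calD$-module isomorphism between Eulerian graded $\calD$-modules is automatically degree-preserving, applied to $H^k_I(R)$ and $H^n_\frakm(R)^{\mu_k}$. Your route---decomposing $H^k_I(R)$ as $\bigoplus_j H^n_\frakm(R)(j)^{\mu_j}$ in the graded category, then tracking the internal grading through the de~Rham complex and using that the connected group $\CC^*$ acts trivially on $\Hsing^\bullet(U;\CC)$ to force $\mu_j=0$ for $j\neq 0$---is a legitimate alternative that avoids the Ma--Zhang citation at the cost of a little extra bookkeeping. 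The key point you flagged, that $H^n_{\dR}(H^n_\frakm(R))$ sits in internal degree $0$, is exactly what makes this work; it is worth stating explicitly that the internal grading on the algebraic de~Rham complex agrees with the $\CC^*$-weight, so that triviality of the $\CC^*$-action on singular cohomology transfers, via the comparison isomorphism, to concentration in internal degree $0$.
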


\begin{proof}
Set $\calD$ to be the Weyl algebra $R\langle \del_1,\dots,\del_n\rangle$, where $\del_j$ denotes partial differentiation with respect to the variable $x_j$; this is the ring of $\CC$-linear differential operators on~$R$. Each~$H^k_I(R)$ is a holonomic $\calD$-module, see for example, \cite[Section~2]{Lyubeznik:Invent} or \cite[Lecture~23]{24hours}. We claim that for each integer $k$ with $k\ge k_0$, the module $H^k_I(R)$ is isomorphic, as a $\calD$-module, to a finite direct sum of copies of the injective hull $E=H^n_\frakm(R)$ of $R/\frakm$ as an $R$-module. This follows from Kashiwara's equivalence, \cite[Proposition~4.3]{Kashiwara}; alternatively, see~\cite[Lemma~(c),~page~208]{Lyubeznik:injective}.

For each $k\ge k_0$, set $\mu_k$ to be the $\CC$-rank of the socle of $H^k_I(R)$; it follows that
\[
H^k_I(R)\ \cong\ E^{\mu_k}\,.
\]

Regard $\del_j$ as the endomorphism of $\calD$ which sends a differential operator $P$ to the composition $\del_j\cdot P$. Then $\del_1,\dots,\del_n$ are commuting endomorphisms of $\calD$. Let $K^\bullet(\bdel;\calD)$ be the Koszul complex on these endomorphisms; this is a complex of right $\calD$-modules. For a left $\calD$-module $M$, set
\[
\dR(M)\ =\ K^\bullet(\bdel;\calD)\otimes_\calD M\,,
\]
which is typically a complex of infinite-dimensional $\CC$-vector spaces. Define $\dR^i(M)$ to be the $i$-th cohomology group of the complex $\dR(M)$. We regard $\dR(-)$ as a functor from the category of $\calD$-modules to the category of complexes of $\CC$-vector spaces. Alternatively, consider the map that is the projection from $X=\Spec R$ to a point; then $\dR(M)$ is the direct image of $M$ under the projection map; see \cite[Section~VI.5]{Borel}.

If $M$ is a holonomic $\calD$-module, then each $\dR^i(M)$ is a $\CC$-vector space of finite rank by~\cite[Theorem~VII.10.1]{Borel}. It is straightforward to verify that
\[
\dR^i(E)\ =\ \begin{cases}0 & \text{ if }i\neq n\,;\\ \CC & \text{ if }i=n\,.\end{cases}
\]
For $k\ge k_0$, it follows that the complex $\dR(H^k_I(R))$ is concentrated in cohomological degree~$n$, and that
\[
\dR^n(H^k_I(R))\ =\ \CC^{\mu_k}\,.
\]

Let $U$ be a Zariski open subset of $X$, and let $U^{an}$ be the corresponding analytic open subset. By the Poincar\'e Lemma, the complex $K^\bullet(\bdel;\calD)\otimes_\calD\calO_{U^{an}}$ is a resolution of the constant sheaf $\CC$ on $U^{an}$. Grothendieck's Comparison Theorem,~\cite[Theorem~1]{derham}, shows that the hypercohomology of the complex $K^\bullet(\bdel;\calD)\otimes_\calD\calO_U$ coincides with the cohomology of the constant sheaf on $U^{an}$, which is the singular cohomology of $U^{an}$.

For an element $g$ of $R$, set $U_g=X\setminus\Var(g)$. Since $U_g$ is affine, and hence $U_g^{an}$ is Stein, the singular cohomology of $U_g^{an}$ is the cohomology of the complex $\dR(R_g)$.

Let $\bsg=g_1,\dots,g_m$ be generators of $I$, and consider the complex of left $\calD$-modules
\[
\Cdot(\bsg;R)\ : \qquad 0\to\bigoplus_iR_{g_i}\to\bigoplus_{i<j}R_{g_ig_j}\to\cdots\to R_{g_1\cdots g_m}\to 0\,,
\]
that is supported in cohomological degrees $0,\dots,m$. For each $p\ge 1$, this complex has cohomology $H^p(\Cdot(\bsg;R))=H^{p+1}_I(R)$.

The sets $U_{g_i}=X\setminus\Var(g_i)$ form an affine open cover for $U=X\setminus\Var(I)$, so the double complex $Q^{\bullet,\bullet}$ with
\[
Q^{p,q}\ =\ K^q(\bdel;\calD)\otimes_\calD C^p(\bsg;R)
\]
is a local trivialization of $\dR(\calO_U)$. It follows that the cohomology of the total complex of $Q^{\bullet,\bullet}$ is the singular cohomology of $U^{an}$, see \cite[Theorem~8.9]{BottTu}, and the surrounding discussion. Consider the spectral sequence associated to $Q^{\bullet,\bullet}$, with the differentials
\[
E^{p,q}_r\to E^{p-r+1,q+r}_r\,.
\]
Taking cohomology along the rows, one obtains the $E_1$ page of the spectral sequence, where the $q$-th column is 
$\dR(H^p(\Cdot(\bsg;R)))$. Thus,
\begin{align*}
E^{p,q}_2\ &=\ \dR^q(H^p(\Cdot(\bsg;R)))\\
&=\ \dR^q(H^{p+1}_I(R))\qquad\text{ for }p\ge1\,.\\
\end{align*}

Suppose that $p\ge\max\{1,k_0-1\}$. Then $E^{p,q}_2=\dR^q(E^{\mu_{p+1}})$, which is zero for $q\neq n$. It follows that the differentials to and from $E^{p,q}_2$ are zero, and so $E^{p,q}_\infty=E^{p,q}_2$. In particular,
\[
\Hsing^{p+n}(U^{an})\ =\ E^{p,n}_\infty=\CC^{\mu_{p+1}}\qquad\text{ for }p\ge\max\{1,k_0-1\}\,.
\]
This proves the isomorphism asserted in the theorem for $k\ge\max\{2,k_0\}$.

It remains to consider the case where $H^k_I(R)$ is $\frakm$-torsion for each $k\ge 1$. If there exists a minimal prime $\frakp$ of $I$ with $\frakp\neq\frakm$, then $H^k_\frakp(R_\frakp)=0$ for each $k\ge 1$, which forces $\frakp=0$ and thus $I=0$; the theorem holds trivially in this case. Lastly, we have the case where $I$ has radical $\frakm$; without loss of generality, $I=\frakm$. Then the only nonvanishing module $H^\bullet_\frakm(R)$ is~$H^n_\frakm(R)=E$; since $\CC^n\setminus\Var(\frakm)$ is homotopic to the real sphere $\SS^{2n-1}$, we have
\[
\Hsing^{n+k-1}(\CC^n\setminus\Var(\frakm)\,;\,\CC)\ =\ 
\begin{cases}0&\text{ if }1\le k\le n-1\\ \CC&\text{ if }k=n\,.\end{cases}
\]

If $I$ and $\frakm$ are homogeneous, then $H^k_I(R)$ and $H^n_\frakm(R)$ are Eulerian graded $\calD$-modules and the isomorphism $H^k_I(R)\cong H^n_\frakm(R)^{\mu_k}$ is degree-preserving by \cite[Theorem 1.1]{Ma-Zhang}.
\end{proof}

\subsection*{Arithmetic rank}

The \emph{arithmetic rank} of an ideal $I$ of a ring $A$, denoted $\ara I$, is the least integer $k$ such that
\[
\rad I\ =\ \rad (g_1,\dots,g_k)A
\]
for elements $g_1,\dots,g_k$ of $A$. It is readily seen that $H^i_I(A)=0$ for each $i>\ara I$. The corresponding result for singular cohomology is the following, see~\cite[Lemma~3]{Bruns-Schwanzl}:

\begin{lemma}
\label{lemma:ara}
Let $W\subseteq\tilde{W}$ be affine varieties over $\CC$, such that $\tilde{W}\setminus W$ is nonsingular of pure dimension $d$. If there exist~$k$ polynomials $f_1,\dots,f_k$ with
\[
W\ =\ \tilde{W}\cap\Var(f_1,\dots,f_k)\,,
\]
then
\[
\Hsing^{d+i}(\tilde{W}\setminus W\,;\,\CC)=0\qquad\text{ for each }i\ge k\,.
\]
\end{lemma}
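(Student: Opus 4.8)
The plan is to realize $\tilde W \setminus W$ as a union of $k$ affine open sets and run the Mayer--Vietoris (\v Cech) spectral sequence, bounding the cohomology of each piece by the Andreotti--Frankel theorem; this is essentially the argument of \cite[Lemma~3]{Bruns-Schwanzl}. Since $W = \tilde W \cap \Var(f_1,\dots,f_k)$, we have
\[
\tilde W \setminus W\ =\ \tilde W \setminus \Var(f_1,\dots,f_k)\ =\ \bigcup_{j=1}^{k} U_j\,,\qquad U_j\ :=\ \tilde W \setminus \Var(f_j)\,.
\]
Each $U_j$ is the principal open subset $D(f_j)$ of the affine variety $\tilde W$, hence affine; likewise every finite intersection $U_{j_0}\cap\dots\cap U_{j_p}$ equals $D(f_{j_0}\cdots f_{j_p})$ and is affine. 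Since these intersections lie inside $\tilde W\setminus W$, which is nonsingular of pure dimension $d$, they are nonsingular affine varieties of dimension at most $d$. By the Andreotti--Frankel theorem, a smooth affine complex variety of dimension $\le d$ is a Stein manifold, hence homotopy equivalent to a CW complex of real dimension $\le d$; therefore
\[
\Hsing^q(U_{j_0}\cap\dots\cap U_{j_p}\,;\,\CC)\ =\ 0\qquad\text{ for }\ q>d\,.
\]

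Next, apply the \v Cech spectral sequence of the open cover $\mathcal U=\{U_1,\dots,U_k\}$ of $\tilde W\setminus W$, computing the cohomology of the constant sheaf $\CC$:
\[
E_1^{p,q}\ =\ \bigoplus_{1\le j_0<\dots<j_p\le k}\Hsing^q(U_{j_0}\cap\dots\cap U_{j_p}\,;\,\CC)\ \Longrightarrow\ \Hsing^{p+q}(\tilde W\setminus W\,;\,\CC)\,,
\]
see \cite[Theorem~8.9]{BottTu} and the surrounding discussion. The index set is empty for $p\ge k$ (there are only $k$ sets in the cover), and the previous step gives $E_1^{p,q}=0$ for $q>d$. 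Hence $E_\infty^{p,q}$ vanishes unless $0\le p\le k-1$ and $0\le q\le d$, so $\Hsing^n(\tilde W\setminus W;\CC)=0$ whenever $n\ge(k-1)+(d+1)=d+k$. Taking $n=d+i$ with $i\ge k$ gives the claim; the case $k=0$ is vacuous, since then $\tilde W\setminus W=\varnothing$.

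The bookkeeping --- principal opens of affine varieties are affine and closed under intersection, and the intersections $U_{j_0}\cap\dots\cap U_{j_p}$ are smooth of dimension $\le d$ because they sit inside the smooth, pure $d$-dimensional $\tilde W\setminus W$ --- is routine, as is the existence and convergence of the \v Cech spectral sequence. The one non-formal ingredient is the Andreotti--Frankel bound on the cohomological dimension of a smooth affine variety; we merely invoke it, and this, rather than any individual step, is the crux of the matter.
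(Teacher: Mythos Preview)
Your proof is correct and is exactly the argument of \cite[Lemma~3]{Bruns-Schwanzl}, which is all the paper does here: it states the lemma and defers to that reference rather than giving its own proof. So there is nothing to compare beyond noting that you have reproduced the standard Mayer--Vietoris/Andreotti--Frankel argument that the paper cites.
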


\begin{lemma}
\label{lemma:basechange}
Let $B\to A$ be a homomorphism of commutative rings; let $I$ be an ideal of~$B$. If $I$ can be generated up to radical by $k$ elements, then
\[
H^k_I(B)\otimes_BA\ \cong\ H^k_{IA}(A)\,.
\]
\end{lemma}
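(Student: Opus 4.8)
The plan is to use the Čech complex (equivalently, the stable Koszul complex) on a carefully chosen sequence of elements, together with flatness arguments. First I would choose elements $g_1,\dots,g_k$ of $B$ with $\rad I = \rad(g_1,\dots,g_k)B$, so that $H^j_I(B)$ is computed by the Čech complex $\check{C}^\bullet(g_1,\dots,g_k;B)$, concentrated in cohomological degrees $0,\dots,k$. Since this complex is zero in degrees above $k$, the top cohomology $H^k_I(B)$ is simply the cokernel of the map $\bigoplus_i B_{g_1\cdots\widehat{g_i}\cdots g_k}\to B_{g_1\cdots g_k}$, i.e.\ a \emph{right-exact} functor of $B$ is being applied here, which is what makes the statement work in this generality (for lower $j$ one would need flatness of $A$ over $B$, which is not assumed).

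Next I would observe that $\check{C}^\bullet(g_1,\dots,g_k;B)\otimes_B A$ is exactly the Čech complex $\check{C}^\bullet(g_1',\dots,g_k';A)$, where $g_i'$ denotes the image of $g_i$ in $A$; this is because localization commutes with base change, $B_{g}\otimes_B A \cong A_{g'}$, compatibly with the Čech differentials. Since $-\otimes_B A$ is right exact, applying it to the presentation $\bigoplus_i B_{\cdots\widehat{g_i}\cdots}\to B_{g_1\cdots g_k}\to H^k_I(B)\to 0$ yields
\[
H^k_I(B)\otimes_B A\ \cong\ \operatorname{coker}\Big(\bigoplus_i A_{g_1'\cdots\widehat{g_i'}\cdots g_k'}\to A_{g_1'\cdots g_k'}\Big)\ =\ H^k\big(\check{C}^\bullet(g_1',\dots,g_k';A)\big).
\]
Finally, since the $g_i$ generate $I$ up to radical in $B$, their images $g_i'$ generate $IA$ up to radical in $A$ (radicals are preserved under ring homomorphisms in the sense that $\rad(\phi(I)A)=\rad(\phi(\rad I)A)$), so the last displayed cohomology group is $H^k_{IA}(A)$. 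Chasing through the identifications, the resulting isomorphism $H^k_I(B)\otimes_B A\cong H^k_{IA}(A)$ is the natural base-change map.

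The only point requiring care — and the main obstacle, such as it is — is verifying that the cokernel description of the top Čech cohomology genuinely only uses the right-exact part of the functor, so that no flatness hypothesis on $A$ over $B$ is needed; concretely, one must check that the map whose cokernel appears is a $B$-module map between modules built only from localizations of $B$, so that tensoring down commutes with taking the cokernel. Everything else is a routine identification of Čech complexes under base change, combined with the elementary fact that $H^k_I(B)$ depends only on $\rad I$ and vanishes above the arithmetic rank.
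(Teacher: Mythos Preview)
Your proof is correct and follows essentially the same approach as the paper: choose $k$ elements generating $I$ up to radical, express $H^k_I(B)$ as the cokernel of the top \v Cech differential, apply the right-exact functor $-\otimes_B A$, and identify the result with the top \v Cech cohomology over $A$. Your additional remarks on why flatness is unnecessary and on preservation of radicals under ring maps are sound elaborations of points the paper leaves implicit.
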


\begin{proof}
Let $b_1,\dots,b_k$ be elements of $B$ that generate $I$ up to radical. Computing $H^k_I(B)$ using a \v Cech complex on the $b_i$, one obtains $H^k_I(B)$ as the cokernel of the homomorphism
\[
\CD
\sum_iB_{b_1\cdots\hat{b}_i\cdots b_k}@>>>B_{b_1\cdots b_k}\,.
\endCD
\]
Since the functor $-\otimes_BA$ is right-exact, $H^k_I(B)\otimes_BA$ is isomorphic to the cokernel of
\[
\CD
\sum_iA_{b_1\cdots\hat{b}_i\cdots b_k}@>>>A_{b_1\cdots b_k}\,,
\endCD
\]
which is the local cohomology module $H^k_{IA}(A)$.
\end{proof}

\subsection*{The $a$-invariant}

Let $A$ be an $\NN$-graded ring such that ${[A]}_0$ is a field; let $\frakm$ be the homogeneous maximal ideal of $A$. Following \cite[Definition~3.1.4]{GW}, the \emph{$a$-invariant} of $A$, denoted~$a(A)$, is the largest integer $k$ such that
\[
{[H^{\dim A}_\frakm(A)]}_k\ \neq\ 0\,.
\]

The following lemma is taken from~\cite[Discussion~7.4]{HH:JAG}:
\begin{lemma}
\label{lemma:ainv}
Let $A$ be an $\NN$-graded ring with ${[A]}_0=\ZZ$, that is finitely generated as an algebra over ${[A]}_0$. Assume, moreover, that $A$ is a free $\ZZ$-module. Let $p$ be a prime integer. If the rings $A/pA$ and $A\otimes_\ZZ\QQ$ are Cohen-Macaulay, then
\[
a(A/pA)\ =\ a(A\otimes_\ZZ\QQ)\,.
\]
\end{lemma}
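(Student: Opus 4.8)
The plan is to compare the two $a$-invariants by routing both through the graded $\ZZ$-module $H^d_\frakm(A)$, where $d=\dim A/pA$ and $\frakm=[A]_{>0}$. Since $A$ is $\ZZ$-free, $p$ is a nonzerodivisor on $A$, so $\dim A=d+1$ and $\dim(A\otimes_\ZZ\QQ)=d$ as well; write $\bar\frakm=[A/pA]_{>0}$ and $\mathfrak{n}=[A\otimes_\ZZ\QQ]_{>0}$, so that $a(A/pA)$ is the top nonvanishing degree of $H^d_{\bar\frakm}(A/pA)$ and $a(A\otimes_\ZZ\QQ)$ is the top nonvanishing degree of $H^d_{\mathfrak{n}}(A\otimes_\ZZ\QQ)$. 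The short exact sequence $0\to A\xrightarrow{\,p\,}A\to A/pA\to0$ of graded $A$-modules yields the long exact sequence
\[
\cdots\to H^{i-1}_\frakm(A/pA)\to H^i_\frakm(A)\xrightarrow{\,p\,}H^i_\frakm(A)\to H^i_\frakm(A/pA)\to H^{i+1}_\frakm(A)\to\cdots,
\]
in which $H^i_\frakm(A/pA)=H^i_{\bar\frakm}(A/pA)$, which vanishes for $i\neq d$ because $A/pA$ is Cohen--Macaulay of dimension $d$. Moreover $A\otimes_\ZZ\QQ$ is the localization of $A$ at $\ZZ\setminus\{0\}$ and local cohomology commutes with localization, so $H^i_\frakm(A)\otimes_\ZZ\QQ\cong H^i_{\mathfrak{n}}(A\otimes_\ZZ\QQ)$, and this too vanishes for $i\neq d$.

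The one nontrivial input I would use is that each graded component $[H^i_\frakm(A)]_j$ is a finitely generated $\ZZ$-module, and is zero for $j\gg0$; this is the standard finiteness theorem for the graded local cohomology of a finitely generated $\NN$-graded algebra over a Noetherian base ring, and it is what replaces the field-based finiteness available when $[A]_0$ is a field. Granting it, the long exact sequence gives: (i) $H^{d-1}_{\bar\frakm}(A/pA)=0$, so multiplication by $p$ is injective on $H^d_\frakm(A)$, hence each $[H^d_\frakm(A)]_j$ is $p$-torsion-free; (ii) $H^{d+1}_{\bar\frakm}(A/pA)=0$, so multiplication by $p$ is surjective on $H^{d+1}_\frakm(A)$, whence each finitely generated $\ZZ$-module $[H^{d+1}_\frakm(A)]_j$ equals its own multiple by $p$ and is therefore finite with no $p$-torsion; (iii) combining (i) and (ii), the connecting map $H^d_{\bar\frakm}(A/pA)\to H^{d+1}_\frakm(A)$ has image contained in the ($p$-torsion, hence zero) submodule of $H^{d+1}_\frakm(A)$, so there is a degree-preserving isomorphism $H^d_{\bar\frakm}(A/pA)\cong H^d_\frakm(A)/pH^d_\frakm(A)$.

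To finish, fix a degree $j$ and write $[H^d_\frakm(A)]_j\cong\ZZ^{r_j}\oplus T_j$ with $T_j$ finite; by (i), $p\nmid|T_j|$, so $T_j/pT_j=0$, and therefore $[H^d_\frakm(A)/pH^d_\frakm(A)]_j\cong\FF_p^{r_j}$ while $[H^d_\frakm(A)\otimes_\ZZ\QQ]_j\cong\QQ^{r_j}$. These are nonzero for exactly the same values of $j$, namely those with $r_j>0$, so
\[
a(A/pA)\ =\ \max\{j:r_j>0\}\ =\ a(A\otimes_\ZZ\QQ)\,.
\]
I expect the main obstacle to be the bookkeeping in the middle paragraph: one must be confident that $[H^i_\frakm(A)]_j$ is finitely generated over $\ZZ$, since that is precisely what makes the steps ``$M$ finitely generated with $M=pM$ implies $M$ finite and $p$-torsion-free'' and ``$[H^d_\frakm(A)]_j\cong\ZZ^{r_j}\oplus T_j$'' legitimate, and it is the only place where being a finitely generated graded algebra over the Noetherian ring $\ZZ$, as opposed to over a field, is genuinely used. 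The remaining points --- that $p$ is $A$-regular, that $\dim(A\otimes_\ZZ\QQ)=\dim(A/pA)=d$, the Cohen--Macaulay vanishing of $H^i$ off degree $d$, and the compatibility of local cohomology with localization --- are routine.
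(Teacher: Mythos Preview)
Your argument is correct, but it takes a genuinely different route from the paper's. The paper observes that freeness of $A$ over $\ZZ$ gives $\rank_{\FF_p}{[A/pA]}_n=\rank_\ZZ{[A]}_n=\rank_\QQ{[A\otimes_\ZZ\QQ]}_n$ for every $n$, so $A/pA$ and $A\otimes_\ZZ\QQ$ share the same Hilbert--Poincar\'e series; since both are Cohen--Macaulay, the series determines the $a$-invariant, and the proof is over in one line. You instead route everything through $H^d_\frakm(A)$ over $\ZZ$, using the long exact sequence for multiplication by $p$ together with the finiteness of the graded pieces ${[H^i_\frakm(A)]}_j$ as $\ZZ$-modules to show that ${[H^d_{\bar\frakm}(A/pA)]}_j\cong\FF_p^{r_j}$ and ${[H^d_{\mathfrak n}(A\otimes_\ZZ\QQ)]}_j\cong\QQ^{r_j}$ for the same free rank $r_j$. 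Your approach is longer and leans on a nontrivial (though standard) finiteness theorem, but it yields more structural information---the two top local cohomology modules have matching graded ranks, not merely the same top degree---and it avoids invoking the relation between the Hilbert series and the $a$-invariant for Cohen--Macaulay rings. One small remark: your claim that $\dim(A\otimes_\ZZ\QQ)=d$ is most cleanly justified by the equality of Hilbert functions (the very observation driving the paper's proof) rather than by the dimension-drop heuristic you sketch.
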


\begin{proof}
The freeness hypothesis implies that for each integer $n$, one has
\[
\rank_{\FF_p}{[A/pA]}_n\ =\ \rank_\ZZ{[A]}_n\ =\ \rank_\QQ{[A\otimes_\ZZ\QQ]}_n\,,
\]
so the rings $A/pA$ and $A\otimes_\ZZ\QQ$ have the same Hilbert-Poincar\'e series; the rings are Cohen-Macaulay, so the Hilbert-Poincar\'e series determines the $a$-invariant.
\end{proof}

Suppose $A$ is an $\NN$-graded normal domain that is finitely generated over a field ${[A]}_0$ of characteristic zero. Consider a desingularization $\phi\colon Z\to\Spec A$, i.e., a proper birational morphism with $Z$ a nonsingular variety. Then $A$ has \emph{rational singularities} if
\[
R^i\phi_*\calO_Z\ =\ 0\qquad\text{ for each }\ i\ge 1\,;
\]
the vanishing is independent of $\phi$. By Flenner~\cite{Flenner} or Watanabe~\cite{KW:kstar}, $a(A)$ is negative whenever $A$ has rational singularities. By Boutot's theorem~\cite{Boutot}, direct summands of rings with rational singularities have rational singularities; specifically, if $A$ is the ring of invariants of a linearly reductive group acting linearly on a polynomial ring, then $a(A)<0$.

\subsection*{Associated primes of local cohomology}

Huneke~\cite[Problem~4]{Huneke:Sundance} asked whether local cohomology modules of Noetherian rings have finitely many associated prime ideals. A counterexample was given by Singh \cite{Singh:MRL}, see Example~\ref{example:hypersurface} below, by constructing $p$-torsion elements for each prime integer $p$. The same paper disproved a conjecture of Hochster about $p$-torsion elements in $H^3_{I_2}(\ZZ[X])$, where $X$ is a $2\times 3$ matrix of indeterminates, and motivated our study of $p$-torsion in local cohomology modules $H^k_{I_t}(\ZZ[X])$, for~$I_t$ a determinantal ideal; this is completely settled by Theorem~\ref{theorem:intro:rational} of the present paper.

\begin{example}
\label{example:hypersurface}
Let $A$ be the hypersurface $\ZZ[u,v,w,x,y,z]/(ux+vy+wz)$, and let $\fraka$ be the ideal~$(x,y,z)$. By \cite{Singh:MRL} the module $H^3_\fraka(A)$ has $p$-torsion for each prime integer $p$, equivalently, $H^3_\fraka(A)$, viewed as an Abelian group, contains a copy of $\ZZ/p\ZZ$ for each $p$. Chan \cite{Chan} proved that $H^3_\fraka(A)$ contains a copy of each finitely generated Abelian group; moreover, the ring and module in question have a $\ZZ^4$-grading, and Chan shows that any finitely generated Abelian group may be embedded into a single $\ZZ^4$-graded component.
\end{example}

When $R$ is a regular ring, $H^k_\fraka(R)$ is conjectured to have finitely many associated prime ideals, \cite[Remark~3.7]{Lyubeznik:Invent}. This conjecture is now known to be true when $R$ has prime characteristic by Huneke and Sharp~\cite{HS:TAMS}; when $R$ is local or affine of characteristic zero by Lyubeznik~\cite{Lyubeznik:Invent}; when $R$ is an unramified regular local ring of mixed characteristic by~\cite{Lyubeznik:Comm}; and when $R$ is a smooth $\ZZ$-algebra by \cite{BBLSZ}. For rings $R$ of equal characteristic, local cohomology modules $H^k_\fraka(R)$ with infinitely many associated prime ideals were constructed by Katzman~\cite{Katzman}, and subsequently Singh and Swanson, \cite{SS:IMRN}.

A related question is whether, for Noetherian rings $A$, the sets of primes that are minimal in the support of $H_\fraka^k(A)$ is finite, equivalently, whether the support is closed in~$\Spec A$. For positive answers on this, we point the reader towards~\cite{HKM} and the references therein.

\section{Determinantal ideals}
\label{sec:determinantal}

We prove Theorem~\ref{theorem:intro:rational} using the results of the previous sections; we begin with a well-known lemma, see, for example, \cite[Proposition~2.4]{Bruns-Vetter}. We sketch the proof since it is an elementary idea that is used repeatedly.

\begin{lemma}
\label{lemma:matrix:invert}
Consider the matrices of indeterminates $X=(x_{ij})$ where $1\le i\le m$, $1\le j\le n$, and $Y=(y_{ij})$ where $2\le i\le m$, $2\le j\le n$. Set $R=\ZZ[X]$ and $R'=\ZZ[Y]$. Then the map
\[
R'[x_{11},\dots,x_{m1},\ x_{12},\dots,x_{1n}]_{x_{11}}\to R_{x_{11}}\quad\text{ with }\quad y_{ij}\mapsto x_{ij}-\frac{x_{i1}x_{1j}}{x_{11}}
\]
is an isomorphism. Moreover, $R_{x_{11}}$ is a free $R'$-module, and for each $t\ge1$, one has
\[
I_t(X)R_{x_{11}}\ =\ I_{t-1}(Y)R_{x_{11}}
\]
under this isomorphism.
\end{lemma}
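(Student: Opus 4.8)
The plan is to exhibit the stated map as the composition of a sequence of standard isomorphisms, and then to track the determinantal ideals through this composition. First I would make the change of variables explicit. Work in $R_{x_{11}}=\ZZ[X]_{x_{11}}$ and perform column and row operations that correspond to multiplying $X$ on the left and right by elementary matrices with entries in $R_{x_{11}}$. Concretely, for $2\le i\le m$ subtract $(x_{i1}/x_{11})$ times the first row from the $i$-th row; then for $2\le j\le n$ subtract $(x_{1j}/x_{11})$ times the first column from the $j$-th column. This transforms $X$ into a matrix whose first row is $(x_{11},0,\dots,0)$, whose first column is $(x_{11},0,\dots,0)^{T}$, and whose lower-right $(m-1)\times(n-1)$ block has $(i,j)$ entry $x_{ij}-x_{i1}x_{1j}/x_{11}$. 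Setting $y_{ij}=x_{ij}-x_{i1}x_{1j}/x_{11}$ for $2\le i\le m$, $2\le j\le n$, the elements
\[
x_{11},\ x_{21},\dots,x_{m1},\ x_{12},\dots,x_{1n},\ \{y_{ij}\}_{2\le i\le m,\,2\le j\le n}
\]
are visibly algebraically independent over $\ZZ$ and generate $R_{x_{11}}$ together with the inverse of $x_{11}$, since each $x_{ij}$ is recovered as $y_{ij}+x_{i1}x_{1j}/x_{11}$. This gives the asserted isomorphism $R'[x_{11},\dots,x_{m1},x_{12},\dots,x_{1n}]_{x_{11}}\xrightarrow{\ \sim\ }R_{x_{11}}$, and freeness of $R_{x_{11}}$ as an $R'$-module is then immediate: a basis is given by the monomials in $x_{11}^{\pm1},x_{21},\dots,x_{m1},x_{12},\dots,x_{1n}$.

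Next I would prove the identity $I_t(X)R_{x_{11}}=I_{t-1}(Y)R_{x_{11}}$. The clean way is to observe that the elementary row and column operations above are realized by left-multiplying $X$ by an invertible matrix $P\in GL_m(R_{x_{11}})$ and right-multiplying by an invertible matrix $Q\in GL_n(R_{x_{11}})$, producing the block matrix
\[
PXQ\ =\ \begin{pmatrix} x_{11} & 0\\ 0 & Y'\end{pmatrix},
\]
where $Y'=(y_{ij})$ is the $(m-1)\times(n-1)$ matrix of indeterminates (in the images under the isomorphism). Since ideals of $t\times t$ minors are invariant under multiplication by invertible matrices — this is the Cauchy–Binet argument, namely that each minor of $PXQ$ is an $R_{x_{11}}$-linear combination of minors of $X$ of the same size, and conversely — we get $I_t(X)R_{x_{11}}=I_t(PXQ)R_{x_{11}}$. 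Finally, for the block-diagonal matrix $\mathrm{diag}(x_{11},Y')$ with $x_{11}$ a unit, a size-$t$ minor is nonzero only if it uses the $(1,1)$ entry (any $t\times t$ submatrix avoiding the first row or column either lies in $Y'$ or has a zero row/column), so $I_t(\mathrm{diag}(x_{11},Y'))R_{x_{11}}=x_{11}\cdot I_{t-1}(Y')R_{x_{11}}=I_{t-1}(Y')R_{x_{11}}$; translating back along the isomorphism yields $I_t(X)R_{x_{11}}=I_{t-1}(Y)R_{x_{11}}$.

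I expect the main obstacle to be purely bookkeeping rather than conceptual: writing down the elementary matrices $P$ and $Q$ correctly and verifying the block form $PXQ=\mathrm{diag}(x_{11},Y')$, together with carefully matching the indices of $Y$ (running over $2\le i\le m$, $2\le j\le n$) against those of the contracted $(m-1)\times(n-1)$ block. The invariance of minor ideals under $GL$-multiplication and the triangularity of the resulting change of variables are both elementary, so no serious difficulty arises there; one should, however, be careful that the stated map sends $y_{ij}$ to $x_{ij}-x_{i1}x_{1j}/x_{11}$ and not to the reverse substitution, and that the target is $R_{x_{11}}$ (not $R$), which is why inverting $x_{11}$ is essential throughout. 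Since the lemma is classical and only sketched in the paper, I would present exactly this chain of reductions and leave the Cauchy–Binet verification to a one-line reference such as \cite[Proposition~2.4]{Bruns-Vetter}.
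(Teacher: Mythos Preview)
Your approach is essentially identical to the paper's: row and column operations over $R_{x_{11}}$ bring $X$ to the block form $\mathrm{diag}(x_{11},Y')$, from which both the isomorphism and the ideal identity are read off. One small slip: it is not true that a size-$t$ minor of $\mathrm{diag}(x_{11},Y')$ is nonzero only if it involves the $(1,1)$ entry---minors lying entirely in $Y'$ can be nonzero---but since $I_t(Y')\subseteq I_{t-1}(Y')$ the conclusion $I_t(\mathrm{diag}(x_{11},Y'))R_{x_{11}}=I_{t-1}(Y')R_{x_{11}}$ is unaffected.
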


\begin{proof}
After inverting the element $x_{11}$, one may perform row operations to transform $X$ into a matrix where $x_{11}$ is the only nonzero entry in the first column. Then, after subtracting appropriate multiples of the first column from other columns, one obtains a matrix
\[
\begin{pmatrix}
x_{11}&0&\dots&0\\
0&x'_{22}&\dots&x'_{2n}\\
\vdots&\vdots&&\vdots\\
0&x'_{m2}&\dots&x'_{mn}
\end{pmatrix}\,
\qquad\text{ where } x'_{ij}\ =\ x_{ij}-\frac{x_{i1}x_{1j}}{x_{11}}\,;
\]
the asserted isomorphism is then $y_{ij}\mapsto x'_{ij}$. 
The ideal $I_t(X)R_{x_{11}}$ is generated by the size $t$ minors of the displayed matrix, and hence equals $I_{t-1}(Y)R_{x_{11}}$. The assertion that $R_{x_{11}}$ is a free $R'$-module follows from the fact that the ring extension
\[
\ZZ\left[x_{ij}-x_{i1}x_{1j}/x_{11}\mid 2\le i\le m,\ 2\le j\le n\right]\ \subset\ \ZZ\left[X,\ 1/x_{11}\right]
\]
is obtained by adjoining indeterminates $x_{11},\dots,x_{m1},x_{12},\dots,x_{1n}$, and inverting $x_{11}$.
\end{proof}

\begin{proof}[Proof of Theorem~\ref{theorem:intro:rational}]
Multiplication by a prime integer $p$ on~$R$ induces the exact sequence
\[
\CD
@>>>H^k_{I_t}(R/pR)@>{\delta}>>H^{k+1}_{I_t}(R)@>{\cdot p}>>H^{k+1}_{I_t}(R)@>>>H^{k+1}_{I_t}(R/pR)@>>>\,,
\endCD
\]
and (1) is precisely the statement that each connecting homomorphism $\delta$ as above is zero. The ideal~$I_tR/pR$ is perfect by Hochster-Eagon \cite{Hochster-Eagon}, i.e., $R/(I_t+pR)$ is a Cohen-Macaulay ring; alternatively, see~\cite[Section~12]{DEP}. By \cite[Proposition~III.4.1]{PS}, it follows that 
\[
H^k_{I_t}(R/pR)\ =\ 0\qquad\text{ if and only if }\ k\neq \height I_t\,.
\]
Thus, to prove (1) and (2), it suffices to prove the injectivity of the map
\begin{equation}
\label{eqn:seq:det}
\CD
H^{\height I_t+1}_{I_t}(R)@>{\cdot p}>>H^{\height I_t+1}_{I_t}(R)\,.
\endCD
\end{equation}
We proceed by induction on $t$. The ideal $I_1$ is generated by a regular sequence, so the injectivity holds when $t=1$ as the modules in~\eqref{eqn:seq:det} are zero.

We claim that the $a$-invariant of the ring $R/(I_t+pR)$ is negative. This follows from the fact that $R/(I_t+pR)$ is $F$-rational, see~\cite[Theorem~7.14]{HH:JAG}; alternatively, the $a$-invariant is computed explicitly in \cite[Corollary~1.5]{BH:ainv} as well as \cite{Grabe}. In particular, one has
\[
\left[H^{\dim R/(I_t+pR)}_{(x_{11},\dots,x_{mn})}(R/(I_t+pR))\right]_0\ =\ 0\,.
\]
By Corollary~\ref{corollary:p:torsion}, it now suffices to show that the map~\eqref{eqn:seq:det} is injective upon inverting each $x_{ij}$, without loss of generality, $x_{11}$. We use the matrix $Y$ as in Lemma~\ref{lemma:matrix:invert} with identifications $y_{ij}=x_{ij}-x_{i1}x_{1j}/x_{11}$, and $R'=\ZZ[Y]$. The ring $R_{x_{11}}$ is a free $R'$-module by Lemma~\ref{lemma:matrix:invert}, so one has an $R'$-module isomorphism $R_{x_{11}}\cong\bigoplus R'$, and so
\[
H^{\height I_t(X)+1}_{I_t(X)}(R_{x_{11}})\ =\ H^{\height I_t(X)+1}_{I_{t-1}(Y)}(R_{x_{11}}) \ \cong\ \bigoplus H^{\height I_t(X)+1}_{I_{t-1}(Y)}(R')\,.
\]
But
\[
\height I_t(X)\ =\ (m-t+1)(n-t+1)\ =\ \height I_{t-1}(Y)\,,
\]
and multiplication by $p$ is injective on
\[
H^{\height I_{t-1}(Y)+1}_{I_{t-1}(Y)}(R')
\]
by the inductive hypothesis. This completes the proof of (1) and (2).

We next verify that $H^{mn-t^2+1}_{I_t}(\ZZ[X])$ is a $\QQ$-vector space under the hypotheses of (3). By (2), it is enough to check that $mn-t^2+1$ is greater than
\[
\height I_t\ =\ (m-t+1)(n-t+1)\,.
\]
After rearranging terms, the desired inequality reads
\[
(t-1)(m+n-2t)\ >\ 0\,,
\]
and the hypotheses on $t$ ensure that this is indeed the case. Hence
\[
H^{mn-t^2+1}_{I_t}(\ZZ[X])\ \cong\ H^{mn-t^2+1}_{I_t}(\QQ[X])\,.
\]
We claim that $H^{mn-t^2+1}_{I_t}(\QQ[X])$ is $\frakm$-torsion; it suffices to check that it vanishes upon inverting, say, $x_{11}$. Using Lemma~\ref{lemma:matrix:invert} as before, one has
\[
H^{mn-t^2+1}_{I_t(X)}(\QQ[X])_{x_{11}}\ \cong\ \bigoplus H^{mn-t^2+1}_{I_{t-1}(Y)}(\QQ[Y])\,,
\]
but these modules are zero since $mn-t^2+1$ is greater than
\[
\ara I_{t-1}(Y)=(m-1)(n-1)-(t-1)^2+1\,.
\]
Hence the support of $H^{mn-t^2+1}_{I_t}(\QQ[X])$ is contained in $\{\frakm\}$; of course, $H^k_{I_t}(\QQ[X])=0$ for all $k>mn-t^2+1$. By the $\calD$-module arguments as in the proof of Theorem~\ref{theorem:comparison}, we have
\[
H^{mn-t^2+1}_{I_t}(\QQ[X])\ \cong\ H^{mn}_\frakm(\QQ[X])^\mu\,,
\]
and it remains to determine the integer $\mu$. It suffices to compute this after base change to~$\CC$, so we work instead with $\CC[X]$. By~\cite[Lemma~2]{Bruns-Schwanzl}, one has
\[
\Hsing^{2mn-t^2}(\CC^{mn}\setminus\Var(I_t)\,;\,\CC)\ \cong\ \CC\,,
\]
and Theorem~\ref{theorem:comparison} now implies that $\mu=1$.
\end{proof}

We examine Theorem~\ref{theorem:intro:rational} for a $2\times3$ matrix of indeterminates:

\begin{example}
\label{example:2x3}
Let $R=\ZZ[u,v,w,x,y,z]$ be a polynomial ring over $\ZZ$. Take $I_2$ to be the ideal generated by the size $2$ minors of the matrix
\[
\begin{pmatrix}
u & v & w\cr
x & y & z
\end{pmatrix}\,.
\]
Let $p$ be a prime integer, and set $\bar{R}=R/pR$. Multiplication by $p$ on $R$ induces the cohomology exact sequence
\[
\CD
@>>>H^2_{I_2}(R)@>\pi>>H^2_{I_2}(\bar{R})@>{\delta}>>H^3_{I_2}(R)@>{\cdot p}>>H^3_{I_2}(R)@>>>0\,,
\endCD
\]
bearing in mind that $H^3_{I_2}(\bar{R})=0$ since $I_2$ is perfect. Theorem~\ref{theorem:intro:rational} implies that the connecting homomorphism $\delta$ is zero i.e., that $\pi$ is surjective; we examine this in elementary terms. Towards this, view $H^2_{I_2}(\bar{R})$ as the direct limit
\[
\dlim_{e\in\NN}\Ext^2_{\bar{R}}\big(\bar{R}/(\Delta_1^{[p^e]},\Delta_2^{[p^e]},\Delta_3^{[p^e]}),\ \bar{R}\big)\,,
\]
where $\Delta_1=vz-wy$, $\Delta_2=wx-uz$, and $\Delta_3=uy-vx$. The complex
\[
\CD
0@>>>\bar{R}^2@>{\left[\begin{smallmatrix}u&x\\v&y\\w&z\end{smallmatrix}\right]}>>\bar{R}^3
@>{\left[\begin{smallmatrix}\Delta_1&\Delta_2&\Delta_3\end{smallmatrix}\right]}>>
\bar{R}@>>>0
\endCD
\]
is a free resolution of $\bar{R}/(\Delta_1,\Delta_2,\Delta_3)$. By the flatness of the Frobenius map, it follows that
\[
\CD
0@>>>\bar{R}^2@>{\left[\begin{smallmatrix}u^{p^e}&x^{p^e}\\v^{p^e}&y^{p^e}\\w^{p^e}&z^{p^e}\end{smallmatrix}\right]}>>\bar{R}^3
@>{\left[\begin{smallmatrix}\Delta_1^{p^e}&\Delta_2^{p^e}&\Delta_3^{p^e}\end{smallmatrix}\right]}>>\bar{R}@>>>0
\endCD
\]
is a free resolution of $\bar{R}/(\Delta_1^{[p^e]},\Delta_2^{[p^e]},\Delta_3^{[p^e]})$ for each $e\ge 1$. Hence $H^2_{I_2}(\bar{R})$ is generated by elements $\alpha_e$ and $\beta_e$ corresponding to the relations
\begin{align*}
u^{p^e}\Delta_1^{p^e}+v^{p^e}\Delta_2^{p^e}+w^{p^e}\Delta_3^{p^e}&\equiv0\mod pR\,,\\
x^{p^e}\Delta_1^{p^e}+y^{p^e}\Delta_2^{p^e}+z^{p^e}\Delta_3^{p^e}&\equiv0\mod pR\,,
\end{align*}
respectively, where $e\ge 1$. As $\pi$ is surjective, these relations must lift to $R$; indeed, in \cite{Singh:MRL}, we constructed the following equational identity:
\begin{multline*}
\sum_{i,j}\binom{k}{i+j}\binom{k+i}{k}\binom{k+j}{k}\left[
u^{k+1}\Delta_1^{2k+1}(-wx)^i(vx)^j\Delta_2^{k-i}\Delta_3^{k-j}\right.\\
\qquad\qquad\qquad\qquad +v^{k+1}\Delta_2^{2k+1}(-uy)^i(wy)^j\Delta_3^{k-i}\Delta_1^{k-j}\\
\left.+w^{k+1}\Delta_3^{2k+1}(-vz)^i(uz)^j\Delta_1^{k-i}\Delta_2^{k-j}\right]=0
\end{multline*}
for each $k\ge0$. Viewed as a relation on the elements $\Delta_1^{2k+1}$, $\Delta_2^{2k+1}$, and $\Delta_3^{2k+1}$, the identity yields an element of $H^2_{I_2}(R)$ for each $k$. Take $k=p^e-1$. Since
\[
\binom{k}{i+j}\binom{k+i}{k}\binom{k+j}{k}\equiv0\mod p\quad\text{ unless }(i,j)=(0,0)\,,
\]
the element of $H^2_{I_2}(R)$ maps to an element of $H^2_{I_2}(\bar{R})$ corresponding to the relation
\[
\big(\Delta_1\Delta_2\Delta_3\big)^{p^e-1}\left[u^{p^e}\Delta_1^{p^e}+v^{p^e}\Delta_2^{p^e}+w^{p^e}\Delta_3^{p^e}\right]\equiv0\mod pR\,,
\]
i.e., precisely to $\alpha_e$. The case of $\beta_e$ is, of course, similar.
\end{example}

\section{The vanishing theorem}
\label{sec:vanishing}

Let $M$ be an $m\times n$ matrix with entries from a commutative Noetherian ring $A$. Set $\fraka$ to be the ideal generated by the size $t$ minors of $M$. By Bruns \cite[Corollary~2.2]{Bruns:MSRI}, the ideal~$\fraka$ can be generated up to radical by $mn-t^2+1$ elements. It follows that $\cd_R(\fraka)$, i.e., the cohomological dimension of $\fraka$, satisfies
\[
\cd_A(\fraka)\ \le\ mn-t^2+1\,.
\]
While this inequality is sharp in general, \cite[Corollary,~page~440]{Bruns-Schwanzl}, we can do better when additional conditions are imposed upon the ring $A$:

\begin{theorem}
\label{theorem:det:vanish}
Let $M=(m_{ij})$ be an $m\times n$ matrix with entries from a commutative Noetherian ring $A$. Let $t$ be an integer with $2\le t\le\min\{m,n\}$ that differs from at least one of~$m$ and~$n$. Set $\fraka$ to be the ideal generated by the size $t$ minors of $M$. Then:
\begin{enumerate}[\quad\rm(1)]
\item The local cohomology module $H^{mn-t^2+1}_\fraka(A)$ is a $\QQ$-vector space, and thus vanishes if the canonical homomorphism $\ZZ\to A$ is not injective.

\item Suppose that $\dim\,A<mn$, or, more generally, that $\dim\,A\otimes_\ZZ\QQ<mn$. Then one has $\cd_A(\fraka)<mn-t^2+1$; in particular, $H^{mn-t^2+1}_\fraka(A)=0$.

\item If the images of $m_{ij}$ in $A\otimes_\ZZ\QQ$ are algebraically dependent over a field that is a subring of $A\otimes_\ZZ\QQ$, then $\cd_A(\fraka)<mn-t^2+1$.
\end{enumerate}
\end{theorem}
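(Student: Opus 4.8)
The plan is to reduce all three assertions to a single base-change identity and then apply Grothendieck vanishing. Let $X=(x_{ij})$ be an $m\times n$ matrix of indeterminates over $\ZZ$, and let $\ZZ[X]\to A$ be the ring homomorphism $x_{ij}\mapsto m_{ij}$, so that $\fraka=I_t(X)A$. Since $I_t(X)$ is generated up to radical by $mn-t^2+1$ elements (Bruns' bound recalled above), Lemma~\ref{lemma:basechange} gives
\[
H^{mn-t^2+1}_\fraka(A)\ \cong\ H^{mn-t^2+1}_{I_t}(\ZZ[X])\otimes_{\ZZ[X]}A\,.
\]
By Theorem~\ref{theorem:intro:rational}(3) the first factor is $H^{mn}_\frakm(\QQ[X])$, a module over $\QQ[X]=\ZZ[X]\otimes_\ZZ\QQ$; since $\QQ[X]\otimes_{\ZZ[X]}A\cong A\otimes_\ZZ\QQ$ and $\frakm=(x_{11},\dots,x_{mn})$ is generated by $mn$ elements, I would apply Lemma~\ref{lemma:basechange} a second time to obtain the master isomorphism
\[
H^{mn-t^2+1}_\fraka(A)\ \cong\ H^{mn}_{\frakm S}(S),\qquad\text{where }S:=A\otimes_\ZZ\QQ
\]
and $\frakm S$ denotes the ideal of $S$ generated by the images of the $m_{ij}$; note $S$ is Noetherian, being a localization of $A$. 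Since $H^i_\fraka(A)=0$ for $i>mn-t^2+1$ (as $\ara\fraka\le mn-t^2+1$), in every case it then suffices to prove $H^{mn}_{\frakm S}(S)=0$ in order to conclude $\cd_A(\fraka)<mn-t^2+1$.

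Part (1) would follow at once: $H^{mn}_{\frakm S}(S)$ is a module over the $\QQ$-algebra $S$, hence a $\QQ$-vector space; and if $\ZZ\to A$ is not injective then some nonzero integer annihilates $1_A$, so $S=A\otimes_\ZZ\QQ=0$ (that integer is invertible in the $\QQ$-algebra $S$ yet kills $1_S$), whence $H^{mn}_{\frakm S}(S)=0$.

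For part (2), either hypothesis gives $\dim S\le\dim A<mn$, so I would invoke Grothendieck's vanishing theorem directly to get $H^{mn}_{\frakm S}(S)=0$. For part (3), the images of the $m_{ij}$ in $S$ are algebraically dependent over a subfield $k\subseteq S$, so the kernel $\frakp$ of the $k$-algebra map $k[X]\to S$, $x_{ij}\mapsto m_{ij}$, is nonzero and proper; hence $\bar R:=k[X]/\frakp$ is a Noetherian ring of dimension at most $mn-1$, and $k[X]\to S$ factors through $\bar R$. Writing $\bar\frakm$ for the image of $(x_{11},\dots,x_{mn})$ in $\bar R$, one has $\bar\frakm S=\frakm S$, and a further application of Lemma~\ref{lemma:basechange} (to the ideal $\bar\frakm$, generated by $mn$ elements) gives $H^{mn}_{\frakm S}(S)\cong H^{mn}_{\bar\frakm}(\bar R)\otimes_{\bar R}S$; this vanishes because $H^{mn}_{\bar\frakm}(\bar R)=0$, once more by Grothendieck vanishing since $mn>\dim\bar R$.

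I do not expect a serious obstacle once Theorem~\ref{theorem:intro:rational}(3) is available; the delicate points are purely organizational. First, one must check that the isomorphism of Theorem~\ref{theorem:intro:rational}(3) is $\ZZ[X]$-linear, so that $-\otimes_{\ZZ[X]}A$ transports it and the two base changes compose to the master isomorphism. Second, in part (3) one must resist bounding $\dim S$ itself---which may be arbitrarily large even when the $m_{ij}$ are algebraically dependent---and instead pass the ideal $\frakm S$ through the finite-dimensional quotient $\bar R$ of $k[X]$ cut out by a single algebraic relation; this is exactly where the hypothesis that the ground ring $k$ is a \emph{field} is used, since it forces $\dim k[X]=mn$ and hence $\dim\bar R<mn$. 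Every other step is a direct application of Lemma~\ref{lemma:basechange}, Bruns' arithmetic-rank bound, and Grothendieck vanishing, all taking place at the critical degree $mn-t^2+1$.
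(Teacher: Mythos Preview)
Your proof is correct and follows essentially the same approach as the paper: both derive the master isomorphism $H^{mn-t^2+1}_\fraka(A)\cong H^{mn}_{\frakm}(A\otimes_\ZZ\QQ)$ via Theorem~\ref{theorem:intro:rational}(3) and two applications of Lemma~\ref{lemma:basechange}, then invoke Grothendieck vanishing. The only organizational difference is in part~(3): the paper passes to the $\FF$-subalgebra $B\subseteq A\otimes_\ZZ\QQ$ generated by the $m_{ij}$ and applies part~(2) to $B$, whereas you work with the isomorphic quotient $\bar R=k[X]/\frakp$ and apply Grothendieck vanishing directly at cohomological degree $mn$; these are equivalent since $B\cong\bar R$ and part~(2) itself unwinds to the same Grothendieck-vanishing step.
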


\begin{remark}
\label{remark:det:sharp}
The hypotheses of the theorem exclude $t=1$ and $t=m=n$ since, in these cases, assertion (1) need not hold.

The cohomological dimension bounds in (2) and (3) are sharp: take $R=\QQ[X]$ to be the ring of polynomials in an $m\times n$ matrix of indeterminates $X$, and set~$A=R/x_{11}R$; note that $\dim\,A<mn$. Let $t$ be as in Theorem~\ref{theorem:det:vanish}. Multiplication by~$x_{11}$ on $R$ induces the local cohomology exact sequence
\[
\CD
@>>>H^{mn-t^2}_{I_tA}(A)@>\delta>>H^{mn-t^2+1}_{I_t}(R)@>x_{11}>>H^{mn-t^2+1}_{I_t}(R)@>>>0\,.
\endCD
\]
By Theorem~\ref{theorem:intro:rational}, the support of $H^{mn-t^2+1}_{I_t}(R)$ is precisely the homogenous maximal ideal of $R$, so $\image\delta=\ker x_{11}$ is nonzero. It follows that $H^{mn-t^2}_{I_tA}(A)$ is nonzero. 
\end{remark}

\begin{proof}[Proof of Theorem~\ref{theorem:det:vanish}]
Set $R$ to be the polynomial ring $\ZZ[X]$, where $X$ is an $m\times n$ matrix of indeterminates. By Theorem~\ref{theorem:intro:rational} we have
\[
H^{mn-t^2+1}_{I_t}(R)\ \cong\ H^{mn}_\frakm(\QQ\otimes_\ZZ R),
\]
where $\frakm=(x_{11},\dots,x_{mn})R$. Let $R\to A$ be the ring homomorphism with $x_{ij}\mapsto m_{ij}$; the extended ideal $I_tA$ equals $\fraka$. Since $I_t$ is generated up to radical by $mn-t^2+1$ elements, and $\frakm$ by $mn$ elements, Lemma~\ref{lemma:basechange} provides the first and the third of the isomorphisms below:
\begin{multline}
\label{eqn:iso}
H^{mn-t^2+1}_\fraka(A)\ \cong\ H^{mn-t^2+1}_{I_t}(R)\otimes_RA\
\cong\ H^{mn}_\frakm(\QQ\otimes_\ZZ R)\otimes_RA\
\cong\ H^{mn}_\frakm(\QQ\otimes_\ZZ A)\,.
\end{multline}
It follows that $H^{mn-t^2+1}_\fraka(A)$ is a $\QQ$-vector space, which settles (1).

For (2), if $\dim\,A\otimes_\ZZ\QQ<mn$, then $H^{mn}_\frakm(A\otimes_\ZZ\QQ)$ vanishes since the cohomological dimension is bounded above by the Krull dimension of the ring. Thus, by~\eqref{eqn:iso},
\[
H^{mn-t^2+1}_\fraka(A)\ =\ 0\,.
\]
Since $\fraka$ can be generated up to radical by $mn-t^2+1$ elements, $H^k_\fraka(A)$ also vanishes for all integers $k$ with $k>mn-t^2+1$. Hence, $\cd_A(\fraka)<mn-t^2+1$.

For (3), let $\FF$ be the field, and set $B$ to be the $\FF$-subalgebra of~$A\otimes_\ZZ\QQ$ generated by the images of $m_{ij}$. Take $\frakb$ to be the ideal of $B$ generated by the size $t$ minors. Then $\dim\,B<mn$, so (2) gives $H^{mn-t^2+1}_\frakb(B)=0$. Using (1) along with Lemma~\ref{lemma:basechange}, it follows that
\[
H^{mn-t^2+1}_\fraka(A)\ \cong \ H^{mn-t^2+1}_\fraka(A\otimes_\ZZ\QQ)\ \cong \ H^{mn-t^2+1}_\frakb(B)\otimes_B(A\otimes_\ZZ\QQ)\ =\ 0\,.
\qedhere
\]
\end{proof}

\section{Pfaffians of alternating matrices}
\label{sec:alt}

We prove the analogues of Theorem~\ref{theorem:intro:rational} and Theorem~\ref{theorem:det:vanish} for Pfaffians of alternating matrices. Let $t$ be an even integer. The ideal generated by the Pfaffians of the $t\times t$ diagonal submatrices of an $n\times n$ alternating matrix of indeterminates has height
\[
\binom{n-t+2}{2}\,,
\]
see for example~\cite[Section~2]{Jozefiak-Pragacz}, and its arithmetic rank is
\[
\binom{n}{2}-\binom{t}{2}+1
\]
by Barile,~\cite[Theorem~4.1]{Barile:ara}. We need the following result, which is the analogue of Lemma~\ref{lemma:matrix:invert} for alternating matrices; see \cite[Lemma~1.2]{Jozefiak-Pragacz} or \cite[Lemma~1.3]{Barile:ara}:

\begin{lemma}
\label{lemma:pfaffian:invert}
Let $X$ be an $n\times n$ alternating matrix of indeterminates; set $R=\ZZ[X]$. Then there exists an $(n-2)\times(n-2)$ generic alternating matrix $Y$ with entries from $R_{x_{12}}$, such that $R_{x_{12}}$ is a free $\ZZ[Y]$-module, and
\[
P_t(X)R_{x_{12}}\ =\ P_{t-2}(Y)R_{x_{12}}\quad\text{ for each even integer }t\ge4\,.
\]
\end{lemma}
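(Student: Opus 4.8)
The plan is to mimic the proof of Lemma~\ref{lemma:matrix:invert}, replacing the row and column operations used there by a congruence transformation that preserves the alternating structure. After inverting $x_{12}$, use the $(1,2)$ entry as a pivot: for each $j\ge 3$, perform the column operation $C_j\mapsto C_j+x_{12}^{-1}x_{2j}C_1-x_{12}^{-1}x_{1j}C_2$ together with the mirror row operation $R_j\mapsto R_j+x_{12}^{-1}x_{2j}R_1-x_{12}^{-1}x_{1j}R_2$; that is, replace $X$ by $U^{T}XU$, where $U=I+\sum_{j\ge3}x_{12}^{-1}(x_{2j}E_{1j}-x_{1j}E_{2j})$ is unipotent upper-triangular, hence invertible over $R_{x_{12}}$. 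A direct computation shows that
\[
U^{T}XU\ =\ \begin{pmatrix} 0 & x_{12} & 0\\ -x_{12} & 0 & 0\\ 0 & 0 & Y\end{pmatrix},
\]
where $Y=(y_{ij})_{3\le i,j\le n}$ is the $(n-2)\times(n-2)$ alternating matrix with $y_{ij}=x_{ij}-x_{12}^{-1}\bigl(x_{1i}x_{2j}-x_{2i}x_{1j}\bigr)$ for $3\le i<j\le n$.

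Next I would establish the freeness. The list $x_{12}$, $x_{1j}$, $x_{2j}$ $(3\le j\le n)$, $y_{ij}$ $(3\le i<j\le n)$ has exactly $1+2(n-2)+\binom{n-2}{2}=\binom{n}{2}$ elements, matching the number of variables of $R$; and since each $y_{ij}$ is $x_{ij}$ plus a polynomial in $x_{12}^{\pm1}$ and the $x_{1k},x_{2k}$, this list is obtained from the variables of $R_{x_{12}}$ by a triangular, hence invertible, substitution. Therefore
\[
R_{x_{12}}\ =\ \ZZ\bigl[x_{12}^{\pm1},\,x_{1k},\,x_{2k}\ (k\ge3)\bigr]\bigl[y_{ij}\ :\ 3\le i<j\le n\bigr]
\]
is a polynomial ring over $\ZZ[Y]=\ZZ[y_{ij}:3\le i<j\le n]$; in particular $Y$ is a generic alternating matrix over $R_{x_{12}}$, and $R_{x_{12}}$ is a free $\ZZ[Y]$-module, with basis the monomials in $x_{12}^{\pm1}$ and the $x_{1k},x_{2k}$.

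It remains to prove the Pfaffian ideal identity. First, $P_t$ is invariant under congruence by an invertible matrix: for any $n\times t$ matrix $V$ over a commutative ring (with $t$ even), the Cauchy--Binet formula for Pfaffians gives $\operatorname{Pf}(V^{T}XV)=\sum_{|J|=t}\det(V_{J,\bullet})\operatorname{Pf}(X_{J,J})$, so $\operatorname{Pf}(V^{T}XV)\in P_t(X)$; letting $V$ run through the $n\times t$ column-submatrices of $U$ shows $P_t(U^{T}XU)\subseteq P_t(X)$, and equality follows on replacing $U$ by $U^{-1}$. Applying this with our $U$ gives $P_t(X)R_{x_{12}}=P_t(U^{T}XU)R_{x_{12}}$. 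Now read off the $t\times t$ principal Pfaffians of the block matrix $U^{T}XU$: a size-$t$ index set $I$ contributes $x_{12}\operatorname{Pf}(Y_{I',I'})$ if $\{1,2\}\subseteq I$ with $I'=I\setminus\{1,2\}$; contributes $0$ if $I$ meets $\{1,2\}$ in exactly one element, since then $U^TXU$ restricted to $I$ has a zero row; and contributes the $t$-Pfaffian $\operatorname{Pf}(Y_{I,I})$ if $I\cap\{1,2\}=\emptyset$. Hence $P_t(U^{T}XU)=x_{12}\,P_{t-2}(Y)+P_t(Y)$. Since the Pfaffian ideals are nested, $P_t(Y)\subseteq P_{t-2}(Y)$ (expand a $t$-Pfaffian along a row to write it as an $R$-linear combination of $(t-2)$-Pfaffians), and since $x_{12}$ is a unit in $R_{x_{12}}$, this yields $P_t(X)R_{x_{12}}=P_{t-2}(Y)R_{x_{12}}$ for every even $t\ge4$.

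The main obstacle is the Pfaffian identity under congruence: one needs the Cauchy--Binet formula for Pfaffians together with the case analysis for the block matrix above; by contrast, the nesting $P_t(Y)\subseteq P_{t-2}(Y)$ and the unipotent change of variables are routine and parallel Lemma~\ref{lemma:matrix:invert}. All of these ingredients are present, in one form or another, in the treatments of \cite{Jozefiak-Pragacz} and \cite{Barile:ara} that the lemma cites.
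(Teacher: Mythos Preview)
Your proof is correct. The paper does not actually give its own proof of this lemma; it simply cites \cite[Lemma~1.2]{Jozefiak-Pragacz} and \cite[Lemma~1.3]{Barile:ara}, and your argument---the congruence transformation $X\mapsto U^{T}XU$ to block-diagonalize, the triangular change of variables establishing freeness, and the Cauchy--Binet formula for Pfaffians to identify $P_t(X)R_{x_{12}}$ with $P_{t-2}(Y)R_{x_{12}}$---is precisely the standard proof found in those references, and is the exact analogue of the paper's Lemma~\ref{lemma:matrix:invert} that you were modelling.
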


\begin{theorem}
\label{theorem:pfaffian:rational}
Let $R=\ZZ[X]$ be a polynomial ring, where $X$ is an $n\times n$ alternating matrix of indeterminates. Let $t$ be an even integer, and let $P_t$ denote the ideal generated by the Pfaffians of the size $t$ diagonal submatrices of~$X$. Then:
\begin{enumerate}[\quad\rm(1)]
\item $H^k_{P_t}(R)$ is a torsion-free $\ZZ$-module for all integers $k$.
\item If $k$ differs from the height of $P_t$, then $H^k_{P_t}(R)$ is a $\QQ$-vector space.
\item Let $\frakm$ be the homogeneous maximal ideal of $\QQ[X]$. If $2<t<n$, then
\[
H^{\binom{n}{2}-\binom{t}{2}+1}_{P_t}(\ZZ[X])\ \cong\ H^{\binom{n}{2}}_\frakm(\QQ[X])\,.
\]
\end{enumerate}
\end{theorem}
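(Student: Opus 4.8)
The plan is to follow the proof of Theorem~\ref{theorem:intro:rational} almost verbatim, making three substitutions: Lemma~\ref{lemma:pfaffian:invert} replaces Lemma~\ref{lemma:matrix:invert}, the height $\height P_t=\binom{n-t+2}{2}$ replaces $\height I_t$, and Barile's value $\binom{n}{2}-\binom{t}{2}+1$ for the arithmetic rank replaces $mn-t^2+1$. Multiplication by a prime integer $p$ on $R$ induces a long exact sequence, and assertion~(1) is precisely the statement that every connecting homomorphism in it vanishes, i.e.\ that $H^{k+1}_{P_t}(R)\xrightarrow{\,p\,}H^{k+1}_{P_t}(R)$ is injective for each $k$. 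Since Pfaffian ideals are perfect, $R/(P_t+pR)$ is Cohen--Macaulay (see, e.g., \cite{Jozefiak-Pragacz}), so $H^k_{P_t}(R/pR)=0$ for $k\ne\height P_t$ by \cite[Proposition~III.4.1]{PS}. Exactly as in the determinantal case, this reduces both~(1) and~(2) to showing that
\[
\CD
H^{\height P_t+1}_{P_t}(R)@>p>>H^{\height P_t+1}_{P_t}(R)
\endCD
\]
is injective.

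I would prove this by induction on the even integer $t$. When $t=2$ the ideal $P_2$ is generated by the $\binom{n}{2}$ off-diagonal indeterminates $x_{ij}$ with $i<j$, hence by a regular sequence, so $H^k_{P_2}(R)=0$ for $k\ne\binom{n}{2}=\height P_2$ and the displayed map is zero. For the inductive step, one uses that the $a$-invariant of $R/(P_t+pR)$ is negative: this Pfaffian ring is $F$-rational in positive characteristic, and, alternatively, by Lemma~\ref{lemma:ainv} its $a$-invariant equals that of $\QQ[X]/P_t$, which is negative because the Pfaffian variety over $\QQ$ has rational singularities. Consequently the degree-zero component of $H^{\dim R/(P_t+pR)}_\frakm(R/(P_t+pR))$ vanishes, so the Frobenius action on it is nilpotent, and Corollary~\ref{corollary:p:torsion} reduces the injectivity to the case where each entry of $X$ is inverted, say $x_{12}$. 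By Lemma~\ref{lemma:pfaffian:invert}, $R_{x_{12}}$ is a free $\ZZ[Y]$-module for an $(n-2)\times(n-2)$ generic alternating matrix $Y$, with $P_t(X)R_{x_{12}}=P_{t-2}(Y)R_{x_{12}}$; since
\[
\height P_t(X)\ =\ \binom{n-t+2}{2}\ =\ \binom{(n-2)-(t-2)+2}{2}\ =\ \height P_{t-2}(Y)\,,
\]
freeness gives $H^{\height P_t(X)+1}_{P_t(X)}(R_{x_{12}})\cong\bigoplus H^{\height P_{t-2}(Y)+1}_{P_{t-2}(Y)}(\ZZ[Y])$, on which $p$ acts injectively by the inductive hypothesis applied to the even integer $t-2\ge2$. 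This settles (1) and (2); the induction bottoms out at $t=2$, and Lemma~\ref{lemma:pfaffian:invert} is only used for $t\ge4$, so no edge case is missed.

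For~(3), assume $2<t<n$. One first checks the height inequality
\[
\binom{n}{2}-\binom{t}{2}+1-\binom{n-t+2}{2}\ =\ (n-t)(t-2)\ >\ 0\,,
\]
so that $\binom{n}{2}-\binom{t}{2}+1\ne\height P_t$ and (2) yields $H^{\binom{n}{2}-\binom{t}{2}+1}_{P_t}(\ZZ[X])\cong H^{\binom{n}{2}-\binom{t}{2}+1}_{P_t}(\QQ[X])$. Inverting $x_{12}$ and applying Lemma~\ref{lemma:pfaffian:invert} once more, this $\QQ$-module localizes to a direct sum of copies of $H^{\binom{n}{2}-\binom{t}{2}+1}_{P_{t-2}(Y)}(\QQ[Y])$, which vanishes because $\binom{n}{2}-\binom{t}{2}+1>\binom{n-2}{2}-\binom{t-2}{2}+1=\ara P_{t-2}(Y)$ (this inequality is $2n-3>2t-3$); together with $\ara P_t=\binom{n}{2}-\binom{t}{2}+1$ this shows $H^k_{P_t}(\QQ[X])$ is $\frakm$-torsion for all $k\ge\binom{n}{2}-\binom{t}{2}+1$. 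The $\calD$-module argument used in the proof of Theorem~\ref{theorem:comparison} then gives a degree-preserving isomorphism $H^{\binom{n}{2}-\binom{t}{2}+1}_{P_t}(\QQ[X])\cong H^{\binom{n}{2}}_\frakm(\QQ[X])^\mu$, where, after base change to $\CC$, the multiplicity $\mu$ equals the $\CC$-rank of $\Hsing^{2\binom{n}{2}-\binom{t}{2}}(\CC^{\binom{n}{2}}\setminus\Var(P_t)\,;\,\CC)$ by Theorem~\ref{theorem:comparison}.

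The main obstacle is the last point, namely showing $\mu=1$: one must verify that this top nonvanishing singular cohomology group of the complement of the alternating rank-$(t-2)$ locus is one-dimensional --- the analogue for Pfaffians of \cite[Lemma~2]{Bruns-Schwanzl}. I expect this to follow from a direct Bruns--Schw\"anzl-style computation, since the complement deformation retracts onto a homogeneous space attached to the symplectic group whose rational cohomology in the relevant degree is $\CC$, and such a computation should be available in or extractable from \cite{Barile:ara}; alternatively it follows from the representation-theoretic description of $H^\bullet_{P_t}(\CC[X])$. Once $\mu=1$ is known, combining the displayed isomorphisms with assertion~(1) produces the asserted degree-preserving isomorphism over $\ZZ[X]$, and every remaining step is a routine transcription of Section~\ref{sec:determinantal}.
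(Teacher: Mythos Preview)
Your approach matches the paper's proof essentially line for line: the reduction of (1) and (2) to injectivity of multiplication by $p$ on $H^{\height P_t+1}_{P_t}(R)$, the induction on $t$ starting from $t=2$, the use of the negative $a$-invariant together with Corollary~\ref{corollary:p:torsion} and Lemma~\ref{lemma:pfaffian:invert}, and the reduction of (3) via Theorem~\ref{theorem:comparison} to a singular cohomology computation are all exactly what the paper does.

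The one place where your proposal stops short is the computation $\mu=1$, which you flag as the main obstacle and defer to \cite{Barile:ara} or to representation theory. The paper confirms your instinct that the ingredients are in \cite{Barile:ara}, but notes the result is not explicitly recorded there and supplies a sketch. The argument is not a deformation retract onto a homogeneous space as you surmise, but rather: set $V=\Var(P_t)$ and $\tilde V=\Var(P_{t+2})$, so $\tilde V\setminus V$ is smooth of dimension $\binom{n}{2}-\binom{n-t}{2}$. The exact sequence in compactly supported cohomology for $L\setminus\tilde V\subset L\setminus V$ with closed complement $\tilde V\setminus V$, combined with Lemma~\ref{lemma:ara} applied to $\ara P_{t+2}$ and Poincar\'e duality, identifies $\Hsing^{2\binom{n}{2}-\binom{t}{2}}(L\setminus V)$ with the top singular cohomology of $\tilde V\setminus V$. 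Barile shows that $\tilde V\setminus V$ is a fiber bundle over the Grassmannian $G_{n-t,n}$ with fiber $\Alt(t)$, the space of invertible alternating $t\times t$ matrices, which is homotopy equivalent to a compact connected orientable manifold of real dimension $\binom{t}{2}$; the Leray spectral sequence (with simply connected base) then gives one-dimensional top cohomology, hence $\mu=1$. Filling this in completes your proof.
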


\begin{proof}
We follow the logical structure of the proof of Theorem~\ref{theorem:intro:rational}. Let $p$ be a prime integer. The ring $R/(P_t+pR)$ is Cohen-Macaulay by~\cite{Kleppe-Laksov} or~\cite{Marinov1, Marinov2}, so the module~$H^k_{P_t}(R/pR)$ vanishes for $k\neq\height P_t$ by \cite[Proposition~III.4.1]{PS}. For (1) and (2), it thus suffices to prove the injectivity of the map
\begin{equation}
\label{eqn:seq:alt}
\CD
H^{\height P_t+1}_{P_t}(R)@>{\cdot p}>>H^{\height P_t+1}_{P_t}(R)\,,
\endCD
\end{equation}
and this is by induction on $t$, the case $t=2$ being trivial since $P_2$ is generated by a regular sequence. The $a$-invariant of $R/(P_t+pR)$ is computed explicitly in \cite[Corollary~1.7]{BH:ainv}; alternatively, $(R/P_t)\otimes_\ZZ\QQ$ is the ring of invariants of the symplectic group---which is linearly reductive in the case of characteristic zero---and hence has rational singularities; using Lemma~\ref{lemma:ainv}, it follows that the $a$-invariant of the ring $R/(P_t+pR)$ is negative.

Using Lemma~\ref{lemma:pfaffian:invert}, the inductive hypothesis implies that~\eqref{eqn:seq:alt} is injective upon inverting $x_{12}$, equivalently, any $x_{ij}$. But then Corollary~\ref{corollary:p:torsion} yields the injectivity of~\eqref{eqn:seq:alt}.

For (3), note that $H^{\binom{n}{2}-\binom{t}{2}+1}_{P_t}(\ZZ[X])$ is a $\QQ$-vector space by (2), since the hypothesis
\[
(t-2)(n-t)\ >\ 0\
\]
is equivalent to
\[
\binom{n}{2}-\binom{t}{2}+1\ >\ \binom{n-t+2}{2}\,.
\]
To verify that $H^{\binom{n}{2}-\binom{t}{2}+1}_{P_t}(\QQ[X])$ is $\frakm$-torsion, it suffices to check that
\[
H^{\binom{n}{2}-\binom{t}{2}+1}_{P_t}(\QQ[X])_{x_{12}}\ =\ 0\,,
\]
and this follows from Lemma~\ref{lemma:pfaffian:invert} since
\[
\binom{n}{2}-\binom{t}{2}+1\ >\ \ara P_{t-2}(Y)\ =\ \binom{n-2}{2}-\binom{t-2}{2}+1\,.
\]
By Theorem~\ref{theorem:comparison}, it follows that
\[
H^{\binom{n}{2}-\binom{t}{2}+1}_{P_t}(\QQ[X])\ \cong\ H^{\binom{n}{2}}_\frakm(\QQ[X])^\mu\,,
\]
where $\mu$ is the rank of the singular cohomology group
\[
\Hsing^{2\binom{n}{2}-\binom{t}{2}}(L\setminus\Var(P_t)\,;\,\CC)
\]
as a complex vector space, where $L$ is the affine space $\CC^{\binom{n}{2}}$. The computation of this cohomology follows entirely from \cite{Barile:ara}; however, since it is not explicitly recorded, we include a sketch for the convenience of the reader. The cohomology groups below are with coefficient group $\CC$.

Let $V=\Var(P_t)$ and $\tilde{V}=\Var(P_{t+2})$. Then $\tilde{V}\setminus V$ is smooth by \cite[Theorem~17]{Kleppe-Laksov}, and of complex dimension 
\[
\binom{n}{2}-\binom{n-t}{2}\,.
\]
Consider the exact sequence of cohomology with compact support:
\[
\CD
@>>>H^{\binom{t}{2}}_c(L\setminus\tilde{V})@>>>H^{\binom{t}{2}}_c(L\setminus V)@>>>H^{\binom{t}{2}}_c(\tilde{V}\setminus V)@>>>H^{\binom{t}{2}+1}_c(L\setminus\tilde{V})@>>>\,.
\endCD
\]
We claim that the middle map is an isomorphism; for this, it suffices to prove that
\[
H^{\binom{t}{2}}_c(L\setminus\tilde{V})\ =\ 0\ =\ H^{\binom{t}{2}+1}_c(L\setminus\tilde{V})\,.
\]
By Poincar\'e duality, this is equivalent to
\[
\Hsing^{2\binom{n}{2}-\binom{t}{2}}(L\setminus\tilde{V})\ =\ 0\ =\ \Hsing^{2\binom{n}{2}-\binom{t}{2}-1}(L\setminus\tilde{V})\,, 
\]
which follows from Lemma~\ref{lemma:ara} since $P_{t+2}$ has arithmetic rank $\binom{n}{2}-\binom{t+2}{2}+1$.

Using Poincar\'e duality once again, we have
\[
\Hsing^{2\binom{n}{2}-\binom{t}{2}}(L\setminus V)\ \cong\ \Hsing^{2\binom{n}{2}-2\binom{n-t}{2}-\binom{t}{2}}(\tilde{V}\setminus V)\,.
\]
By \cite[page~73]{Barile:ara}, the space $\tilde{V}\setminus V$ is a fiber bundle over the Grassmannian $G_{n-t,n}$, with the fiber being the space $\Alt(t)$ of invertible alternating matrices of size $t$; the latter space is homotopy equivalent to a compact, connected, orientable manifold of real dimension $\binom{t}{2}$. Since $G_{n-t,n}$ is simply connected, the Leray spectral sequence
\[
E^{p,q}_2\ =\ \Hsing^p(G_{n-t,n}\,;\,\Hsing^q(\Alt(t)))\ \implies \Hsing^{p+q}(\tilde{V}\setminus V)
\]
shows that
\[
\Hsing^{2\binom{n}{2}-2\binom{n-t}{2}-\binom{t}{2}}(\tilde{V}\setminus V)\ \cong\ \CC\,,
\]
and it follows that $\mu=1$.
\end{proof}

We next record the vanishing theorem for local cohomology supported at Pfaffian ideals:

\begin{theorem}
Let $M=(m_{ij})$ be an $n\times n$ alternating matrix with entries from a commutative Noetherian ring $A$. Let $t$ be even with $2<t<n$, and set $\fraka$ to be the ideal generated by the Pfaffians of the size $t$ diagonal submatrices of $M$. Set $c=\binom{n}{2}-\binom{t}{2}+1$. Then:
\begin{enumerate}[\quad\rm(1)]
\item The local cohomology module $H^c_\fraka(A)$ is a $\QQ$-vector space, and thus vanishes if the canonical homomorphism $\ZZ\to A$ is not injective.

\item If $\dim\,A\otimes_\ZZ\QQ<\binom{n}{2}$, or, more generally, if the images of $m_{ij}$ in $A\otimes_\ZZ\QQ$ are algebraically dependent over a field that is a subring of $A\otimes_\ZZ\QQ$, then $\cd_A(\fraka)<c$.
\end{enumerate}
\end{theorem}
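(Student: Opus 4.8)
The plan is to follow the proof of Theorem~\ref{theorem:det:vanish} essentially verbatim, substituting the Pfaffian input Theorem~\ref{theorem:pfaffian:rational} for the determinantal input Theorem~\ref{theorem:intro:rational}. First I would set $R=\ZZ[X]$, where $X$ is an $n\times n$ alternating matrix of indeterminates, let $\frakm$ denote its homogeneous maximal ideal (generated by the $\binom{n}{2}$ strictly-upper-triangular entries), and observe that the alternating matrix $M$ over $A$ corresponds to a ring homomorphism $R\to A$ under which $P_t$ maps onto $\fraka$. Since $t$ is even with $2<t<n$, Theorem~\ref{theorem:pfaffian:rational}(3) gives $H^c_{P_t}(R)\cong H^{\binom{n}{2}}_\frakm(\QQ\otimes_\ZZ R)$.

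Next I would invoke Barile's theorem that $P_t$ is generated up to radical by exactly $c=\binom{n}{2}-\binom{t}{2}+1$ elements, together with the fact that $\frakm$ is generated by $\binom{n}{2}$ elements, so that Lemma~\ref{lemma:basechange} applies to both ideals. This yields the chain
\[
H^c_\fraka(A)\ \cong\ H^c_{P_t}(R)\otimes_R A\ \cong\ H^{\binom{n}{2}}_\frakm(\QQ\otimes_\ZZ R)\otimes_R A\ \cong\ H^{\binom{n}{2}}_\frakm(\QQ\otimes_\ZZ A)\,,
\]
which already proves (1): the module is visibly a $\QQ$-vector space, and it is killed by any nonzero integer vanishing in $A$, hence is zero whenever $\ZZ\to A$ fails to be injective.

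For (2), if $\dim(A\otimes_\ZZ\QQ)<\binom{n}{2}$ then the right-hand end of the chain vanishes, since cohomological dimension is bounded above by Krull dimension, so $H^c_\fraka(A)=0$; combined with the fact that $\fraka$ is generated up to radical by $c$ elements, this gives $H^k_\fraka(A)=0$ for all $k\ge c$, i.e.\ $\cd_A(\fraka)<c$. For the ``more generally'' clause I would let $\FF$ be the field, let $B\subseteq A\otimes_\ZZ\QQ$ be the $\FF$-subalgebra generated by the images of the $m_{ij}$ (so $B$ is generated by $\binom{n}{2}$ algebraically dependent elements, whence $\dim B<\binom{n}{2}$), and let $\frakb$ be the Pfaffian ideal of the corresponding alternating matrix over $B$; the first part of (2) gives $H^c_\frakb(B)=0$, and then applying the displayed chain to the ring $A\otimes_\ZZ\QQ$ and using Lemma~\ref{lemma:basechange} along $B\to A\otimes_\ZZ\QQ$ produces
\[
H^c_\fraka(A)\ \cong\ H^c_\fraka(A\otimes_\ZZ\QQ)\ \cong\ H^c_\frakb(B)\otimes_B(A\otimes_\ZZ\QQ)\ =\ 0\,.
\]

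Since all the substantive content has been packaged into Theorem~\ref{theorem:pfaffian:rational}, I do not anticipate a genuine obstacle. The only points demanding attention are that the arithmetic-rank bound used is \emph{sharp}---one needs $P_t$ (and $\frakb$) generated up to radical by precisely $c$ elements rather than merely finitely many, which is Barile's computation \cite[Theorem~4.1]{Barile:ara}---and that every alternating matrix over $A$ genuinely arises by base change from the generic one, which is immediate from the universal property of $\ZZ[X]$.
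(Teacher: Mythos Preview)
Your proposal is correct and is precisely what the paper does: the paper's proof consists of the single sentence that one should repeat the arguments of Theorem~\ref{theorem:det:vanish} using the isomorphism $H^c_{P_t}(R)\cong H^{\binom{n}{2}}_\frakm(\QQ\otimes_\ZZ R)$ from Theorem~\ref{theorem:pfaffian:rational}, and you have faithfully spelled out those arguments. One tiny remark: for Lemma~\ref{lemma:basechange} you only need $P_t$ generated up to radical by \emph{at most} $c$ elements, not precisely $c$, though of course Barile's theorem gives equality.
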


\begin{proof}
Set $R=\ZZ[X]$, where $X$ is an $n\times n$ alternating matrix of indeterminates. Define an $R$-algebra structure on~$A$ using $x_{ij}\mapsto m_{ij}$. The theorem now follows from
\[
H^c_{P_t}(R)\ \cong\ H^{\binom{n}{2}}_\frakm(\QQ\otimes_\ZZ R),
\]
using arguments as in the proof of Theorem~\ref{theorem:det:vanish}.
\end{proof}

\begin{remark}
Once again, the bound on $\cd_A(\fraka)$ is sharp: Take $R=\QQ[X]$ to be a polynomial ring in an $n\times n$ alternating matrix of indeterminates $X$. As in Remark~\ref{remark:det:sharp}, set $A=R/x_{12}R$. Then $H^{c-1}_{P_tA}(A)$ is nonzero. 
\end{remark}

\section{Minors of symmetric matrices}
\label{sec:sym}

We prove the analogue of Theorem~\ref{theorem:intro:rational} for minors of symmetric matrices, and also the analogue of Theorem~\ref{theorem:det:vanish} in the case of minors of odd size; the corresponding result is not true for even sized minors, see Remark~\ref{remark:sym:even}. The ideal $I_t$ generated by the size $t$ minors of an $n\times n$ symmetric matrix of indeterminates has height
\[
\binom{n-t+2}{2}\,,
\]
see, for example~\cite[Section~2]{Jozefiak}. By~\cite[Theorems~3.1,~5.1]{Barile:ara}, the arithmetic rank of $I_t$ is 
\[
\ara I_t\ =\ \begin{cases}
\binom{n}{2}-\binom{t}{2}+1 & \text{if the characteristic equals $2$, and $t$ is even,}\\
\binom{n+1}{2}-\binom{t+1}{2}+1 & \text{else}.
\end{cases}
\]
For a symmetric matrix of indeterminates over a field of characteristic zero, the cohomological dimension of the ideal $I_t$ is
\[
\cd(I_t)\ =\ \begin{cases}
\binom{n+1}{2}-\binom{t+1}{2}+1 & \text{if $t$ is odd,}\\
\binom{n}{2}-\binom{t}{2}+1 & \text{if $t$ is even},
\end{cases}
\]
as proved in \cite[Theorem~6.3]{Barile:ara} and \cite[(1.6)]{RaicuWeyman:symm} respectively.

\begin{theorem}
\label{theorem:symmetric:rational}
Let $R=\ZZ[X]$ be a polynomial ring, where $X$ is a symmetric matrix of indeterminates. Let $I_t$ denote the ideal generated by the size $t$ minors of $X$. Then:
\begin{enumerate}[\quad\rm(1)]
\item $H^k_{I_t}(R)$ is a torsion-free $\ZZ$-module for all integers $t,k$.
\item If $k$ differs from the height of $I_t$, then $H^k_{I_t}(R)$ is a $\QQ$-vector space.
\item Let $\frakm$ be the homogeneous maximal ideal of $\QQ[X]$. If $t$ is odd with $1<t<n$, then
\[
H^{\binom{n+1}{2}-\binom{t+1}{2}+1}_{I_t}(\ZZ[X])\ \cong\ H^{\binom{n+1}{2}}_\frakm(\QQ[X])\,.
\]
\end{enumerate}
\end{theorem}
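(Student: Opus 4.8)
The plan is to follow the proofs of Theorem~\ref{theorem:intro:rational} and Theorem~\ref{theorem:pfaffian:rational} step for step. For parts (1) and (2) I would first record that $R/(I_t+pR)$ is Cohen--Macaulay for every prime $p$ (classical for symmetric determinantal rings; see e.g.~\cite{Jozefiak}), so that $H^k_{I_t}(R/pR)=0$ for $k\neq\height I_t$ by \cite[Proposition~III.4.1]{PS}; exactly as before, this reduces (1) and (2) to the injectivity of multiplication by $p$ on $H^{\height I_t+1}_{I_t}(R)$. I would prove this by induction on $t$, the base case $t=1$ being trivial since $I_1$ is generated by all the variables, hence by a regular sequence. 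For the inductive step I would invoke Corollary~\ref{corollary:p:torsion}: its Frobenius hypothesis amounts to $[H^{\dim R/(I_t+pR)}_\frakm(R/(I_t+pR))]_0=0$, i.e.\ to the negativity of the $a$-invariant of $R/(I_t+pR)$, which follows from Lemma~\ref{lemma:ainv} together with the fact that $R/I_t$ is $\ZZ$-free and $(R/I_t)\otimes_\ZZ\QQ$, being the ring of invariants of an orthogonal group acting linearly on a polynomial ring, has rational singularities (alternatively one may quote $F$-rationality in prime characteristic, or the explicit computations of \cite{BH:ainv}). The remaining hypothesis of Corollary~\ref{corollary:p:torsion}, injectivity of $p$ after inverting each variable, I would handle via the symmetric analogue of Lemma~\ref{lemma:matrix:invert}: after inverting $x_{11}$, symmetric elementary operations turn $X$ into $\mathrm{diag}(x_{11},Y)$ for an $(n-1)\times(n-1)$ generic symmetric matrix $Y$, so $I_t(X)R_{x_{11}}=I_{t-1}(Y)R_{x_{11}}$ with $R_{x_{11}}$ free over $\ZZ[Y]$; since $\height I_t(X)=\height I_{t-1}(Y)=\binom{n-t+2}{2}$, the inductive hypothesis applies.

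For part (3), put $c=\binom{n+1}{2}-\binom{t+1}{2}+1$. The elementary identity $c-\height I_t=(t-1)(n-t)$ shows $c\neq\height I_t$ when $1<t<n$, so (2) gives that $H^c_{I_t}(\ZZ[X])$ is a $\QQ$-vector space, hence isomorphic to $H^c_{I_t}(\QQ[X])$. I would then check that $H^c_{I_t}(\QQ[X])$ is $\frakm$-torsion by inverting $x_{11}$ again: the symmetric analogue of Lemma~\ref{lemma:matrix:invert} reduces this to the vanishing of $H^c_{I_{t-1}(Y)}(\QQ[Y])$, which holds because $c>\ara I_{t-1}(Y)=\binom{n}{2}-\binom{t}{2}+1$, the inequality $\binom{n+1}{2}-\binom{t+1}{2}>\binom{n}{2}-\binom{t}{2}$ being simply $n>t$. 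Combined with $H^k_{I_t}(\QQ[X])=0$ for $k>c=\ara I_t$, the $\calD$-module argument from the proof of Theorem~\ref{theorem:comparison} yields a degree-preserving isomorphism $H^c_{I_t}(\QQ[X])\cong H^{\binom{n+1}{2}}_\frakm(\QQ[X])^\mu$. After base change to $\CC$, Theorem~\ref{theorem:comparison} identifies $\mu$ with the $\CC$-rank of $\Hsing^{2\binom{n+1}{2}-\binom{t+1}{2}}(\CC^{\binom{n+1}{2}}\setminus\Var(I_t)\,;\,\CC)$, and everything comes down to showing this rank equals $1$.

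The hard part is this last singular cohomology computation, which I expect to carry out following \cite{Barile:ara} just as in the Pfaffian case. Set $V=\Var(I_t)$ and $\tilde V=\Var(I_{t+1})$; then $\tilde V\setminus V$, the locus of symmetric $n\times n$ matrices of rank exactly $t$, is smooth, and it fibers over the Grassmannian $G_{n-t,n}$ of $(n-t)$-planes in $\CC^n$ (sending a form to its radical) with fiber the space of nondegenerate symmetric $t\times t$ matrices; this fiber is homotopy equivalent to the Lagrangian Grassmannian $\Lambda(t)=U(t)/O(t)$, a compact connected manifold of real dimension $\binom{t+1}{2}$. An excision sequence for compactly supported cohomology, together with Poincar\'e duality and Lemma~\ref{lemma:ara} applied with $\ara I_{t+1}=\binom{n+1}{2}-\binom{t+2}{2}+1$ to annihilate the outer terms, reduces the computation of $\mu$ to that of a single cohomology group of $\tilde V\setminus V$; since $G_{n-t,n}$ is simply connected, the Leray spectral sequence of the bundle degenerates in the relevant corner, and $\mu$ equals the rank of $\Hsing^{\binom{t+1}{2}}(\Lambda(t)\,;\,\CC)$. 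This is exactly where the hypothesis that $t$ is \emph{odd} enters: the tangent bundle of $\Lambda(t)$ is $\Sym^2$ of the dual of the tautological bundle $\gamma$, so the first Stiefel--Whitney class of $\Lambda(t)$ is $(t+1)\,w_1(\gamma)$ with $w_1(\gamma)\neq 0$, whence $\Lambda(t)$ is orientable precisely when $t$ is odd. For odd $t$ this gives $\Hsing^{\binom{t+1}{2}}(\Lambda(t);\CC)=\CC$ and therefore $\mu=1$; for even $t$ the top cohomology of $\Lambda(t)$ vanishes, which is the phenomenon behind Remark~\ref{remark:sym:even}.
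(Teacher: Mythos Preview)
Your argument has a genuine gap in the inductive step for (1)--(2) and in the $\frakm$-torsion check for (3): you only invert a \emph{diagonal} entry. Unlike the generic and alternating cases, in a symmetric matrix of indeterminates there is no automorphism of $R$ preserving $I_t$ that carries an off-diagonal variable $x_{ij}$ (with $i<j$) to a diagonal one, so ``by symmetry, inverting $x_{11}$ suffices'' fails here. Concretely, Corollary~\ref{corollary:p:torsion} asks for injectivity of $p$ after inverting \emph{each} variable, but the symmetric analogue of Lemma~\ref{lemma:matrix:invert} that you quote handles only the localizations $R_{x_{ii}}$, and the radical of $(x_{11},\dots,x_{nn})$ is not $\frakm$.

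The paper repairs this with Lemma~\ref{lemma:sym:invert}, which has \emph{two} clauses: inverting $x_{11}$ yields an $(n-1)\times(n-1)$ generic symmetric matrix $Y$ with $I_t(X)R_{x_{11}}=I_{t-1}(Y)R_{x_{11}}$, while inverting the principal $2\times 2$ minor $\Delta=x_{11}x_{22}-x_{12}^2$ yields an $(n-2)\times(n-2)$ generic symmetric matrix $Y'$ with $I_t(X)R_\Delta=I_{t-2}(Y')R_\Delta$. Since $x_{jk}^2=x_{jj}x_{kk}-\Delta_{jk}$, the diagonal entries $x_{ii}$ together with the principal $2\times 2$ minors $\Delta_{jk}$ generate $\frakm$ up to radical; hence verifying injectivity (resp.\ vanishing) after inverting each $x_{ii}$ \emph{and} each $\Delta_{jk}$ suffices, and both reductions feed into the induction, now descending by one and by two. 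The same pair of checks is needed in part~(3) to conclude $\frakm$-torsion: one must verify both $c>\ara I_{t-1}(Y)=\binom{n}{2}-\binom{t}{2}+1$ and $c>\ara I_{t-2}(Y')=\binom{n-1}{2}-\binom{t-1}{2}+1$. Your singular cohomology computation at the end is correct and matches the paper's; the orientability argument via $w_1(\Lambda(t))=(t+1)w_1(\gamma)$ is a pleasant elaboration of why $t$ must be odd.
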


The analogue of Lemma~\ref{lemma:matrix:invert} for symmetric matrices is the following; for a proof, see \cite[Lemme~2]{Micali-Villamayor} or \cite[Lemma~1.1]{Jozefiak} or \cite[Lemma~1.2]{Barile:ara}.

\begin{lemma}
\label{lemma:sym:invert}
Let $X$ be an $n\times n$ symmetric matrix of indeterminates. Set $R=\ZZ[X]$ and $\Delta=x_{11}x_{22}-x_{12}^2$. Then:
\begin{enumerate}[\quad\rm(1)]
\item There exists an $(n-1)\times(n-1)$ generic symmetric matrix $Y$ with entries from $R_{x_{11}}$ such that $R_{x_{11}}$ is a free $\ZZ[Y]$-module, and
\[
I_t(X)R_{x_{11}}\ =\ I_{t-1}(Y)R_{x_{11}}\quad\text{ for each }t\ge2\,.
\]
\item There exists an $(n-2)\times(n-2)$ generic symmetric matrix $Y'$ with entries from $R_\Delta$ such that $R_\Delta$ is a free $\ZZ[Y']$-module, and
\[
I_t(X)R_\Delta\ =\ I_{t-2}(Y')R_\Delta\quad\text{ for each }t\ge3\,.
\]
\end{enumerate}
\end{lemma}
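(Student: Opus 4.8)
The plan is to follow the strategy of the proof of Lemma~\ref{lemma:matrix:invert}, replacing the row-and-column operations used there by \emph{congruence} transformations $X\mapsto P^{\mathsf T}XP$, which preserve symmetry. For part~(1), I would work over $R_{x_{11}}$, where $x_{11}$ is a unit: if $Q$ is the unitriangular matrix realizing the column operations $C_j\mapsto C_j-(x_{1j}/x_{11})C_1$ for $j\ge 2$, then, since $X$ is symmetric, the congruence $Q^{\mathsf T}XQ$ clears the first row and the first column outside the $(1,1)$ spot, yielding
\[
Q^{\mathsf T}XQ\ =\ \begin{pmatrix} x_{11} & 0 \\ 0 & Y\end{pmatrix}\,,\qquad y_{ij}\ =\ x_{ij}-\frac{x_{i1}x_{1j}}{x_{11}}\quad (2\le i\le j\le n)\,,
\]
the Schur complement of $x_{11}$ in $X$. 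For part~(2), I would work over $R_\Delta$, where the top-left block $B=\left(\begin{smallmatrix}x_{11}&x_{12}\\ x_{12}&x_{22}\end{smallmatrix}\right)$ is invertible; writing $X=\left(\begin{smallmatrix}B&C\\ C^{\mathsf T}&D\end{smallmatrix}\right)$ and taking $P=\left(\begin{smallmatrix}I_2&-B^{-1}C\\ 0&I_{n-2}\end{smallmatrix}\right)$, one gets $P^{\mathsf T}XP=\mathrm{diag}(B,\,Y')$ with $Y'=D-C^{\mathsf T}B^{-1}C$, an $(n-2)\times(n-2)$ symmetric matrix over $R_\Delta$.

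The next step is to verify that $Y$ (resp.\ $Y'$) is generic and that $R_{x_{11}}$ (resp.\ $R_\Delta$) is free over $\ZZ[Y]$ (resp.\ $\ZZ[Y']$). In each case the assignment sending the lower-right-block variables $x_{ij}$ to the corresponding Schur-complement entries, while fixing the entries of the first row (resp.\ of the first two rows) of $X$, is an automorphism of $R_{x_{11}}$ (resp.\ $R_\Delta$): its inverse is $x_{ij}=y_{ij}+x_{i1}x_{1j}/x_{11}$ (resp.\ the analogous polynomial expression built from $B^{-1}$, whose entries lie in $R_\Delta$). Hence the Schur-complement entries are algebraically independent over $\ZZ$, so $\ZZ[Y]$ and $\ZZ[Y']$ are genuine polynomial rings, and
\[
R_{x_{11}}\ =\ \ZZ[Y][x_{11},x_{12},\dots,x_{1n}]_{x_{11}}\,,\qquad R_\Delta\ =\ \ZZ[Y']\bigl[x_{11},x_{12},x_{22},\ x_{1j},x_{2j}\ (j\ge 3)\bigr]_\Delta\,.
\]
The first is patently free over $\ZZ[Y]$, exactly as in Lemma~\ref{lemma:matrix:invert}.

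For the identity on minors, I would use that the ideal of size-$t$ minors is unchanged under multiplication by invertible matrices, so $I_t(X)R_{x_{11}}=I_t(Q^{\mathsf T}XQ)R_{x_{11}}$ and $I_t(X)R_\Delta=I_t(P^{\mathsf T}XP)R_\Delta$. Expanding the size-$t$ minors of a block-diagonal matrix along the small block: for $\mathrm{diag}(x_{11},Y)$ one obtains $I_t(Y)+x_{11}I_{t-1}(Y)$, which over $R_{x_{11}}$ equals $I_{t-1}(Y)R_{x_{11}}$ since $x_{11}$ is a unit and $I_t(Y)\subseteq I_{t-1}(Y)$; this requires $t\ge 2$ so that $t-1\ge 1$. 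For $\mathrm{diag}(B,Y')$ the size-$t$ minors are sums of products (an $s$-minor of $B$)$\cdot$(a $(t-s)$-minor of $Y'$) for $s=0,1,2$, hence generate
\[
I_t(Y')\ +\ (x_{11},x_{12},x_{22})\,I_{t-1}(Y')\ +\ \Delta\,I_{t-2}(Y')\,,
\]
which over $R_\Delta$ equals $I_{t-2}(Y')R_\Delta$ because $\Delta$ is a unit and the ideals of minors are nested; this requires $t\ge 3$.

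The one point that needs more than the argument of Lemma~\ref{lemma:matrix:invert} is the freeness of $R_\Delta$ over $\ZZ[Y']$: since $\Delta$ is a polynomial in the adjoined variables rather than one of them, this reduces to the statement that a localization of a polynomial ring over $\ZZ$ at a single nonzero element is a free $\ZZ$-module. This is true — it can be seen, for instance, via Pontryagin's criterion, or by noting that $\Delta=x_{11}x_{22}-x_{12}^2$ is, up to sign, monic in $x_{12}$, so that $\ZZ[x_{11},x_{12},x_{22}]_\Delta$ is free over $\ZZ$ — and it is the only place where the two parts genuinely differ. Everything else is a routine computation with Schur complements and elementary congruences.
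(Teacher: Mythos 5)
The paper does not prove this lemma; it cites Micali--Villamayor, J\'ozefiak, and Barile, each of which uses precisely the Schur-complement/congruence reduction you describe, so your proposal is the standard argument. Your computation of the minor ideals of the block-diagonal reductions, the identification of the new coordinate systems, and the freeness assertion in part~(1) are all correct.

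One claim in your write-up is false as a blanket statement and should be corrected even though your conclusion stands: a localization of a polynomial ring over $\ZZ$ at a single nonzero element need \emph{not} be a free $\ZZ$-module. For instance $\ZZ[x]_2=\ZZ[1/2][x]$ and $\ZZ[x]_{2x}=\ZZ[1/2][x,x^{-1}]$ are flat but not free over $\ZZ$ (the rank-one subgroup $\ZZ[1/2]$ is not finitely generated, so Pontryagin's criterion fails), and invoking Pontryagin does not rescue the general assertion. What does the work is the second observation you make: $\Delta=x_{11}x_{22}-x_{12}^2$ is, up to sign, monic in $x_{12}$, and $x_{12}$ is one of the adjoined variables in the presentation $R\cong\ZZ[Y'][x_{11},x_{12},x_{22},\{x_{1j},x_{2j}\}_{j\ge3}]$. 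Hence $R/\Delta R$ is a free $\ZZ[Y']$-module (with basis $1,x_{12}$ over the evident polynomial subring), so the filtration $R\subset\Delta^{-1}R\subset\Delta^{-2}R\subset\cdots$ of $R_\Delta$ has free $\ZZ[Y']$-module successive quotients $\cong R/\Delta R$, and a union of an increasing chain of free submodules with free quotients is free. If you replace the false general claim with this direct argument, the proof is complete.
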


\begin{proof}[Proof of Theorem~\ref{theorem:symmetric:rational}]
For the most part, the proof is similar to that of Theorem~\ref{theorem:intro:rational} and Theorem~\ref{theorem:pfaffian:rational}: The ring $R/(I_t+pR)$ is Cohen-Macaulay by Kutz~\cite{Kutz}, so $H^k_{I_t}(R/pR)$ vanishes for $k\neq\height I_t$ by \cite[Proposition~III.4.1]{PS}. For (1) and (2), it suffices to prove the injectivity of the map
\begin{equation}
\label{eqn:seq:sym}
\CD
H^{\height I_t+1}_{I_t}(R)@>{\cdot p}>>H^{\height I_t+1}_{I_t}(R)\,,
\endCD
\end{equation}
and this is by induction on $t$, the case $t=1$ being trivial since $I_1$ is generated by a regular sequence. The $a$-invariant of $R/(I_t+pR)$ is computed in \cite{Barile:ainv} as well as \cite[Section~2.2]{Conca}; alternatively, $(R/I_t)\otimes_\ZZ\QQ$ is the ring of invariants of the orthogonal group, and hence has rational singularities, and so $R/(I_t+pR)$ has a negative $a$-invariant using Lemma~\ref{lemma:ainv}.

Using Lemma~\ref{lemma:sym:invert}, the inductive hypothesis implies that~\eqref{eqn:seq:sym} is injective upon inverting $x_{11}$ as well as upon inverting $\Delta$. The radical of the ideal generated by the elements~$x_{ii}$ for $1\le i\le n$ and $x_{jj}x_{kk}-x_{jk}^2$ for $j<k$ is $(x_{11},x_{12},\dots,x_{nn})$, so the map~\eqref{eqn:seq:sym} is indeed injective by Corollary~\ref{corollary:p:torsion}.

For (3), note that $H^{\binom{n+1}{2}-\binom{t+1}{2}+1}_{I_t}(\ZZ[X])$ is a $\QQ$-vector space, since $(t-1)(n-t)>0$ ensures that
\[
\binom{n+1}{2}-\binom{t+1}{2}+1\ >\ \binom{n-t+2}{2}\,.
\]
To verify that $H^{\binom{n+1}{2}-\binom{t+1}{2}+1}_{I_t}(\QQ[X])$ is $\frakm$-torsion, it suffices to check that
\[
H^{\binom{n+1}{2}-\binom{t+1}{2}+1}_{I_t}(\QQ[X])_{x_{11}}\ =\ 0\ =\ H^{\binom{n+1}{2}-\binom{t+1}{2}+1}_{I_t}(\QQ[X])_\Delta\,.
\]
By Lemma~\ref{lemma:sym:invert}, it is enough to check that
\[
\binom{n+1}{2}-\binom{t+1}{2}+1\ >\ \ara I_{t-1}(Y)\ =\ \binom{n}{2}-\binom{t}{2}+1
\] 
and that
\[
\binom{n+1}{2}-\binom{t+1}{2}+1\ >\ \ara I_{t-2}(Y')\ =\ \binom{n-1}{2}-\binom{t-1}{2}+1\,,
\] 
which is indeed the case. Theorem~\ref{theorem:comparison} now implies that
\[
H^{\binom{n+1}{2}-\binom{t+1}{2}+1}_{I_t}(\QQ[X])\ \cong\ H^{\binom{n+1}{2}}_\frakm(\QQ[X])^\mu\,,
\]
with $\mu$ being the rank of the singular cohomology group
\[
\Hsing^{2\binom{n+1}{2}-\binom{t+1}{2}}(L\setminus\Var(P_t)\,;\,\CC)\,,
\]
where $L=\CC^{\binom{n+1}{2}}$. This, again, follows from \cite{Barile:ara}, though we sketch a proof:

Let $V=\Var(I_t)$ and $\tilde{V}=\Var(I_{t+1})$. Then $\tilde{V}\setminus V$ is smooth by \cite[Theorem~2.2]{Barile:ara}, and of complex dimension 
\[
\binom{n+1}{2}-\binom{n-t+1}{2}\,.
\]
Consider the exact sequence of cohomology with compact support:
\[
\CD
H^{\binom{t+1}{2}}_c(L\setminus\tilde{V})@>>>H^{\binom{t+1}{2}}_c(L\setminus V)@>>>H^{\binom{t+1}{2}}_c(\tilde{V}\setminus V)@>>>H^{\binom{t+1}{2}+1}_c(L\setminus\tilde{V})\,.
\endCD
\]
The ideal $I_{t+1}$ has arithmetic rank $\binom{n+1}{2}-\binom{t+2}{2}+1$, so Lemma~\ref{lemma:ara} implies that
\[
\Hsing^{2\binom{n+1}{2}-\binom{t+1}{2}}(L\setminus\tilde{V})\ =\ 0\ =\
\Hsing^{2\binom{n+1}{2}-\binom{t+1}{2}-1}(L\setminus\tilde{V})\,.
\]
By Poincar\'e duality, one then has
\[
H^{\binom{t+1}{2}}_c(L\setminus\tilde{V})\ =\ 0\ =\ H^{\binom{t+1}{2}+1}_c(L\setminus\tilde{V})\,.
\]
Thus, Poincar\'e duality gives
\[
\Hsing^{2\binom{n+1}{2}-\binom{t+1}{2}}(L\setminus V)\ \cong\ \Hsing^{2\binom{n+1}{2}-2\binom{n-t+1}{2}-\binom{t+1}{2}}(\tilde{V}\setminus V)\,.
\]
By \cite[page~68]{Barile:ara}, the space $\tilde{V}\setminus V$ is a fiber bundle over the Grassmannian $G_{n-t,n}$, with the fiber being the space $\Sym(t)$ of invertible symmetric matrices of size $t$; the latter space is homotopy equivalent to a compact, connected manifold of real dimension $\binom{t+1}{2}$, and when $t$ is odd, the manifold is orientable. The Leray spectral sequence
\[
E^{p,q}_2\ =\ \Hsing^p(G_{n-t,n}\,;\,\Hsing^q(\Sym(t)))\ \implies \Hsing^{p+q}(\tilde{V}\setminus V)
\]
now gives
\[
\Hsing^{2\binom{n+1}{2}-2\binom{n-t+1}{2}-\binom{t+1}{2}}(\tilde{V}\setminus V)\ \cong\ \CC\,,
\]
completing the proof.
\end{proof}

\begin{theorem}
Let $M=(m_{ij})$ be an $n\times n$ symmetric matrix with entries from a commutative Noetherian ring $A$. Let $t$ be an odd integer with $1<t<n$, and set $\fraka$ to be the ideal generated by the size $t$ minors of $M$. Set $c=\binom{n+1}{2}-\binom{t+1}{2}+1$. Then:
\begin{enumerate}[\quad\rm(1)]
\item The local cohomology module $H^c_\fraka(A)$ is a $\QQ$-vector space, and thus vanishes if the canonical homomorphism $\ZZ\to A$ is not injective.

\item If $\dim\,A\otimes_\ZZ\QQ<\binom{n+1}{2}$, or, more generally, if the images of $m_{ij}$ in $A\otimes_\ZZ\QQ$ are algebraically dependent over a field that is a subring of $A\otimes_\ZZ\QQ$, then $\cd_A(\fraka)<c$.
\end{enumerate}
\end{theorem}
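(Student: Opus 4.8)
The plan is to mimic the proof of Theorem~\ref{theorem:det:vanish}, with Theorem~\ref{theorem:symmetric:rational} playing the role of Theorem~\ref{theorem:intro:rational}. Let $R=\ZZ[X]$ with $X$ an $n\times n$ symmetric matrix of indeterminates, and make $A$ an $R$-algebra via $x_{ij}\mapsto m_{ij}$, so that the extension of $I_t$ is $\fraka$. Since $t$ is odd, the arithmetic-rank formula preceding Theorem~\ref{theorem:symmetric:rational} places us in its second branch, giving $\ara I_t=\binom{n+1}{2}-\binom{t+1}{2}+1=c$; in particular $I_t$, and hence $\fraka$, is generated up to radical by $c$ elements, while $\frakm$ is generated by $\binom{n+1}{2}$ elements. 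As $1<t<n$ and $t$ is odd, Theorem~\ref{theorem:symmetric:rational}(3) supplies the isomorphism $H^c_{I_t}(R)\cong H^{\binom{n+1}{2}}_\frakm(\QQ\otimes_\ZZ R)$. Applying Lemma~\ref{lemma:basechange} to both ideals then yields
\[
H^c_\fraka(A)\ \cong\ H^c_{I_t}(R)\otimes_R A\ \cong\ H^{\binom{n+1}{2}}_\frakm(\QQ\otimes_\ZZ R)\otimes_R A\ \cong\ H^{\binom{n+1}{2}}_\frakm(\QQ\otimes_\ZZ A)\,,
\]
which is a $\QQ$-vector space; this proves (1), and it vanishes when $\ZZ\to A$ is not injective since then $\QQ\otimes_\ZZ A=0$.

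For (2), first suppose $\dim A\otimes_\ZZ\QQ<\binom{n+1}{2}$. Then $H^{\binom{n+1}{2}}_\frakm(A\otimes_\ZZ\QQ)=0$ because cohomological dimension is bounded by Krull dimension, so the displayed chain of isomorphisms forces $H^c_\fraka(A)=0$; combined with the vanishing $H^k_\fraka(A)=0$ for $k>c$, valid since $\ara\fraka\le c$, this gives $\cd_A(\fraka)<c$. For the more general statement, let $\FF$ be the ground field, let $B\subseteq A\otimes_\ZZ\QQ$ be the $\FF$-subalgebra generated by the images of the $m_{ij}$, and let $\frakb$ be the ideal of $B$ generated by the size $t$ minors of the resulting symmetric matrix. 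An $n\times n$ symmetric matrix has exactly $\binom{n+1}{2}$ distinct entries, so algebraic dependence of the $m_{ij}$ over $\FF$ forces $\dim B<\binom{n+1}{2}$, and the case already handled gives $H^c_\frakb(B)=0$. Invoking (1) and Lemma~\ref{lemma:basechange} once more,
\[
H^c_\fraka(A)\ \cong\ H^c_\fraka(A\otimes_\ZZ\QQ)\ \cong\ H^c_\frakb(B)\otimes_B(A\otimes_\ZZ\QQ)\ =\ 0\,,
\]
and $\cd_A(\fraka)<c$ follows as before.

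No genuine obstacle arises: once Theorem~\ref{theorem:symmetric:rational} is in hand, the argument is a formal consequence of it together with the base-change lemma, exactly as in the determinantal and Pfaffian cases. The only point demanding attention is the bookkeeping around the parity of $t$ --- one must check that for odd $t$ the arithmetic rank of $I_t$ is $c$ in every characteristic, so that the characteristic-$2$ exception, which concerns even $t$ only, is irrelevant; this is what ensures simultaneously that $\fraka$ is generated up to radical by $c$ elements and that $H^k_\fraka(A)$ vanishes for $k>c$.
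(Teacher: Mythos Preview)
Your proposal is correct and follows precisely the approach the paper intends: the paper's own proof reads simply ``The proof is similar to that of Theorem~\ref{theorem:det:vanish},'' and you have carried out that analogy in full, using Theorem~\ref{theorem:symmetric:rational}(3) in place of Theorem~\ref{theorem:intro:rational}(3) and correctly noting that for odd $t$ the arithmetic rank of $I_t$ equals $c$ regardless of characteristic, so that Lemma~\ref{lemma:basechange} applies at both ends of the displayed chain.
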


\begin{proof}
The proof is similar to that of Theorem~\ref{theorem:det:vanish}.
\end{proof}

\begin{remark}
The bound on $\cd_A(\fraka)$ above is sharp: Take $R=\QQ[X]$ to be a polynomial ring in an $n\times n$ symmetric matrix of indeterminates. The module $H^c_{I_t}(R)$ is $\frakm$-torsion for $t$ odd, and it follows as in Remark~\ref{remark:det:sharp}, that $H^{c-1}_{I_tA}(A)$ is nonzero for $A=R/x_{11}R$. 
\end{remark}

\begin{remark}
\label{remark:sym:even}
Let $R=\QQ[X]$ be a polynomial ring in an $n\times n$ symmetric matrix of indeterminates, and consider the ideal $I_2$ generated by the size $2$ minors of $X$. Then $\cd_R(I_2)=\binom{n}{2}$ by \cite[Example~4.6]{Ogus}. Set $A=R/(x_{11},x_{22},\dots,x_{nn})$. Then the ideal $I_2A$ is primary to the homogeneous maximal ideal of $A$, and hence
\[
\cd_A(\fraka)\ =\ \dim\,A\ =\ \binom{n}{2}\,. 
\]
Thus, while $\dim\,A<\dim\,R$, we have $\cd_A(I_2A)=\cd_R(I_2)$.
\end{remark}

\section{A question on arithmetic rank}
\label{sec:ara}

The vanishing result, Theorem~\ref{theorem:det:vanish}, raises the following question:

\begin{question}
Let $A$ be a polynomial ring over a field, and $\fraka$ the ideal generated by the size~$t$ minors of an $m\times n$ matrix with entries from $A$. Suppose $\dim\,A<mn$, and that~$t$ differs from at least one of $m,n$. Can $\fraka$ be generated up to radical by $mn-t^2$ elements?
\end{question}

There are, of course, corresponding questions when $M$ is a symmetric or alternating matrix. While we admittedly have no approach to these questions, we record two examples:

\begin{example}
This is an example from~\cite{Barile:Colloq}. Let $A$ be the polynomial ring $\FF[v,w,x,y,z]$, and let $\fraka$ be the ideal generated by the size two minors of
\[
\begin{pmatrix}
0 & v & w\cr
x & y & z
\end{pmatrix}\,,
\]
i.e., $\fraka=(vx,\ wx,\ vz-wy)$. Then $\ara\fraka=2$, since $\height\fraka=2$, and $\fraka$ is the radical of the ideal generated by
\[
f=wx^2+z(vz-wy)\quad\text{ and }\quad g=vx^2+y(vz-wy)\,;
\]
to see this, note that $vf-wg=(vz-wy)^2$. 
\end{example}

The following example, and generalizations, may be found in \cite{Valla}; see also \cite{BV, Barile:scrolls}.

\begin{example}
Let $A$ be the polynomial ring $\FF[u,v,w,x,y]$, and let $\fraka$ be the ideal generated by the size two minors of
\[
\begin{pmatrix}
u & v & w\cr
v & x & y
\end{pmatrix}\,.
\]
Then, again, $\ara\fraka=2$, since $\fraka$ is the radical of the ideal generated by $v^2-ux$ and
\[
\det\begin{pmatrix}
u & v & w\cr
v & x & y\cr
w & y & 0
\end{pmatrix}\,,
\]
see, for example, \cite[Example~2.2]{Valla}.
\end{example}

For some recent results concerning matrices of linear forms, see~\cite{BCMM}.

\section*{Acknowledgments}

We thank Bhargav Bhatt, Manuel Blickle, Winfried Bruns, Linquan Ma, Claudia Miller, Claudio Procesi, Matteo Varbaro, and Wenliang Zhang for valuable discussions.


\end{document}